\documentclass[12pt,reqno]{amsart}

\usepackage[lite]{amsrefs}

\usepackage{amssymb}
\usepackage[hidelinks]{hyperref}

\usepackage[all,cmtip]{xy}


\usepackage[shortlabels]{enumitem}

\usepackage{geometry}
\geometry{margin=1in}

\usepackage{cases}



\newcommand{\N}{\mathbb{N}}
\newcommand{\Z}{\mathbb{Z}}
\newcommand{\R}{\mathbb{R}}

\newcommand{\pa}[1]{\left(#1\right)}
\newcommand{\pb}[1]{\left[#1\right]}
\newcommand{\pc}[1]{\left\{#1\right\}}
\newcommand{\pd}[1]{\left|#1\right|}
\newcommand{\pe}[1]{\left\lfloor#1\right\rfloor}

\newcommand{\mP}{\mathcal{P}}
\newcommand{\mF}{\mathcal{F}}
\newcommand{\mN}{\mathcal{N}}
\newcommand{\mL}{\mathcal{L}}
\newcommand{\mS}{\mathcal{S}}
\newcommand{\mD}{\mathcal{D}}
\newcommand{\mA}{\mathcal{A}}
\newcommand{\mZ}{\mathcal{Z}}


\numberwithin{equation}{section}



\theoremstyle{plain} 
\newtheorem{theorem}{Theorem}[section]

\newtheorem{lemma}[theorem]{Lemma}

\theoremstyle{definition}
\newtheorem{definition}[theorem]{Definition}

\theoremstyle{remark}
\newtheorem{remark}[theorem]{Remark}

\begin{document}

\title[Integer Roots of \(LA2\)-type function in the closed rotated square region]{Integer Roots of \(LA2\)-type function in the closed rotated square region}

\author{ONG KUN YI}
\author{Eddie Shahril Bin Ismail}
\address{Department of Mathematical Sciences, Faculty of Science and Technology, Universiti Kebangsaan Malaysia, Bangi, Selangor, Malaysia}


\email{kunyi070101@gmail.com, esbi@ukm.edu.my}



\date{}

\begin{abstract}
    Let \(\mA\) be the set of all Diophantine equations of the form \(au^2 + buv + cv^2 + du + ev + f = 0\), where \(a,b,c,d,e,f \in \Z\) and \(a > 0\). One way to solve the equation \(A \in \mA\) is by applying Lagrange's method which was introduced over 200 years ago. In this paper, we consider a self-defined Diophantine equation \(A \in \mA\), which we called the \textit{\(LA2\)-type equation}, motivated by results of Teckan, \"Ozko{\c c}, Fenolahy, Ramanantsoa and Totohasina. We provide some properties of \(LA2\)-type equations, and determine the set of integer solutions of equation \(A \in \mZ(1)\), where \(\mZ(1)\) is the set of all \(LA2\)-type equations such that \(A\) can be rewrite as Pell's equation \(\tilde{u} - \tau\tilde{v}^2 = 1\). In addition, we show that there exist positive integers \(M'_l\), \(l = 1,2,3,4\) such that for any \(x \in \R, x \geq \mL := \max\{M'_l: l \in \{1,2,3,4\}\}\), the formula of the number of pairwise integer solutions to the equation \(A \in \mZ(1)\) in the region enclosed by the equation \(|u| + |v| \leq x\) in the \(uv\) plane, can be determined and proved. As a consequence, we characterize the set of integer solutions satisfying \(A \in \mZ(1)\) in the region enclosed by the equation \(|u| + |v| \leq x\) in the \(uv\) plane.
\end{abstract}

\maketitle

\section{Introduction}
Denote the set \(\N\), \(\Z\) and \(\R\) as the set of positive integers, set of integers and set of real numbers respectively. Consider the general quadratic Diophantine equation
\begin{equation}
    \label{eq:1.1}
    au^2 + buv + cv^2 + du + ev + f = 0,
\end{equation}
where one is required to find integer solutions of \(\pa{u,v} \in \Z^2\), given \(a,b,c,d,e,f \in \Z\) and \(a > 0\). If \(\gcd\pa{a,b,c,d,e,f} = K\), then we can just divide both sides of Equation \eqref{eq:1.1} by \(K\), so we may assume \(K=1\). The general method of solving Equation \eqref{eq:1.1} has a long history (see \cites{Dickson19, Legendre97, Niven1942} for instance). Several special forms of Equation \eqref{eq:1.1} also being widely explored by researchers, such as \(d = e = 0\), \(d = f = 0\) and \(e = f = 0\) (see \cites{Andrica2017, Keskin2010, Matthews2002, Matthews2021}). In this paper, we discuss a method to solve Equation \eqref{eq:1.1}, which was given by Lagrange \cite{Lagrange68} over 200 years ago. Let \(D = b^2 - 4ac\), \(E = bd - 2ae\) and \(F = d^2 - 4af\), then Lagrange's method was to rewrite Equation \eqref{eq:1.1} as
\begin{equation}
    \label{eq:1.2}
    DV^2 = \pa{Dv + E}^2 + DF - E^2,
\end{equation}
where \(V = 2au + bv + d\). Clearly if \(N = E^2 - DF\), then Equation \eqref{eq:1.2} can be written as
\begin{equation}
    \label{eq:1.3}
    U^2 - DV^2 = N,
\end{equation}
where \(U = Dv + E\). Any solution \(\pa{U,V} \in \Z^2\) of Equation \eqref{eq:1.3} such that there are integers \(u,v\) for which \(U = Dv + E\) and \(V = 2au + bv + d\) will provide a solution \(\pa{u,v} \in \Z^2\) to Equation \eqref{eq:1.1}. If \(D\) is a non-square positive integer, then Equation \eqref{eq:1.1} represents a hyperbola in the Cartesian plane (see \cite{Owings1970}), and Equation \eqref{eq:1.3} with \(N \neq 0\) becomes a general Pell-type equation, where in 2024 Tamang and Singh \cite{Tamang2024} studied its solvability by using elementary and quadratic ring methods. \\

Diophantine Equation \eqref{eq:1.1} with the application of Lagrange's method has been applied extensively by number theory researchers, and it has been a subject of great interest. Here we provide some references. In 2010, Tekcan and \"Ozko{\c c} \cite{Tekcan2010} showed that the Diophantine equation
\begin{align}
    \label{eq:1.4}
    x^2 - (t^2 + t)y^2 - (4t-2)x + (4t^2 + 4t)y = 0,
\end{align}
where \(t \in \N\), consists of infinitely many integer solutions \((x,y) \in \Z^2\). After applying Lagrange's method, Equation \eqref{eq:1.4} can be simplified to \(X^2 - (t^2 + t)Y^2 = 1\) with \(X = x - 2t - 1\) and \(Y = y - 2\). In 2024, Fenolahy, Ramanantsoa and Totohasina \cite{Fenolahy2024} showed that the Diophantine equation
\begin{align}
    \label{eq:1.5}
    x^2 - ay^2 - bx - cy - d = 0,
\end{align}
where \(a,b,c,d \in \N\) and \(a\) is a non-square, exists integer solution \((x,y) \in \Z^2\) by the T-transformation method if and only if there exist positive integers \(\alpha\) and \(\beta\) such that \(b = 2\alpha\) and \(c = 2a \beta\). If \(d = a\beta^2\), then after applying Lagrange's method, Equation \eqref{eq:1.5} can be simplified to \(X^2 - aY^2 = \alpha^2\) with \(X = x - \alpha\) and \(Y = y + \beta\). \\

Notice that Equation \eqref{eq:1.4} and Equation \eqref{eq:1.5} have some similarities that are worth highlighting, in terms of their coefficients \(a,b,c,d,e,f \in \Z\), as compared to Equation \eqref{eq:1.1}. For example, it is not hard to see that \(a = 1\), \(b = 0\) and \(d\) is even. In general, we characterize them as a special type of Diophantine Equation \eqref{eq:1.1}, so-called \emph{\(LA2\)-type equation}, which we will define formally in a moment (Definition \ref{def:1.2}). \\

Let \(\tau\) be a non-square positive integer. An equation of the form
\begin{equation}
    \label{eq:1.6}
    \tilde{u}^2 - \tau \tilde{v}^2 = 1,
\end{equation}
is usually called \textit{Pell's equation}, where the unknowns are the integers \(\tilde{u}\) and \(\tilde{v}\), has a long history (see \cites{Titu2010, Burton2010, Rosen2005, Weil1984} for instance). A classical theorem asserts that the set of integer solutions of Equation \eqref{eq:1.6}, we denote as \(\mP_{\tau}\), is non-trivial, has infinitely many elements and is the union of five non-empty \textit{disjoint} sets \(\mP^{(k)}_{\tau}\) for \(k= 0,1,2,3,4\), i.e. \(\mP_{\tau} = \displaystyle\bigcup_{k=0}^4 \mP^{(k)}_{\tau}\) with
\begin{align}
    \label{eq:1.7}
    \mP^{(0)}_{\tau} &= \pc{(1,0), (-1,0)}, \\
    \label{eq:1.8}
    \mP^{(l)}_{\tau} &= \bigcup_{m=1}^{\infty}\pc{\pa{(-1)^{\pe{\frac{l}{2}}}u_m, (-1)^{\pe{\frac{l-1}{2}}}v_m}} \ \text{if} \ l \in \{1,2,3,4\},
\end{align}
where for any \(m \in \N\), \(u_m\) and \(v_m\) are positive integers defined as
\begin{align}
    \label{eq:1.9}
    u_m &= \dfrac{1}{2}\Bigl\{\pa{\alpha + \beta\sqrt{\tau}}^m + \pa{\alpha - \beta\sqrt{\tau}}^m\Bigr\} = \pe{\dfrac{1}{2}\pa{\alpha + \beta\sqrt{\tau}}^m} + 1, \\
    \label{eq:1.10}
    v_m &= \dfrac{1}{2\sqrt{\tau}}\Bigl\{\pa{\alpha + \beta\sqrt{\tau}}^m - \pa{\alpha - \beta\sqrt{\tau}}^m\Bigr\} = \pe{\dfrac{1}{2\sqrt{\tau}}\pa{\alpha + \beta\sqrt{\tau}}^m},
\end{align}
and \((\alpha, \beta) \in \N^2\) is the so-called \textit{fundamental solution} of Equation \eqref{eq:1.6}, which can be determined by
\[\alpha + \beta \sqrt{\tau} := \inf \Bigl\{\tilde{u}+ \tilde{v}\sqrt{\tau} : (\tilde{u}, \tilde{v}) \in \Z^2, \tilde{u}^2 - \tau \tilde{v}^2 = 1, \tilde{u}+ \tilde{v}\sqrt{\tau} > 1\Bigr\}.\]

\subsection{\(LA2\)-type Equation} Let \(\mA\) be the set of all Diophantine equations of the form Equation \eqref{eq:1.1}:
\begin{align*}
    &\mA = \Bigl\{au^2 + buv + cv^2 + du + ev + f = 0 : \\
    &\hspace{50pt} a,b,c,d,e,f \in \Z, a > 0, \gcd(a,b,c,d,e,f) = 1 \Bigr\}.
\end{align*}

\begin{remark}
    We say \(A_{a,b,c,d,e,f}(u,v) \in \mA\) is that \(A_{a,b,c,d,e,f}(u,v)\) is a Diophantine equation of the form \(au^2 + buv + cv^2 + du + ev + f = 0\) that satisfies the conditions in \(\mA\). For simplicity, we will write \(A := A_{a,b,c,d,e,f}(u,v)\) for the rest of the paper, and we understand that \(A\), with variables \(u\) and \(v\), depends on integers (coefficients) \(a,b,c,d,e\) and \(f\). \\
\end{remark}

For any Diophantine equation \(A \in \mA\), consider the values \(D, E, F\) and \(N\) (all depending on \(A\)) as
\begin{align}
    \label{eq:1.11}
    D = b^2 - 4ac, \hspace{10pt} E = bd - 2ae, \hspace{10pt} F = d^2 - 4af, \hspace{10pt} N = E^2 - DF.
\end{align}
In the rest of the paper, unless otherwise specified, we let the values of \(D\), \(E\), \(F\) and \(N\) be integers (all depending on \(A\)) as defined in Equation \eqref{eq:1.11}. \\

We are now ready to define \textit{\(LA2\)-type equation}: \\

\begin{definition}
    \label{def:1.2}
    We say a Diophantine equation \(A \in \mA\) is a \textit{\(LA2\)-type equation} if \(A\) satisfies the following four conditions:
    \begin{enumerate}[(i)]
        \item \(D\) is a non-square positive integer.
        \item \(D \mid E\).
        \item \(2a \mid b\), \(a \mid c\) and \(2a \mid d\).
        \item \(4a^2D \mid N\).
    \end{enumerate}
    We denote the set \(\mZ\) as the set of all \(LA2\)-type equations. Also for any \(j \in \Z\), we define the set \(\mZ(j) \subseteq \mZ\) as
    \[\mZ(j) := \pc{A \in \mZ : j = \dfrac{N}{-4a^2D}}.\]
\end{definition}

\vspace{20pt}
Based on Definition \ref{def:1.2}, one can verify that Equation \eqref{eq:1.4} and Equation \eqref{eq:1.5} are both \(LA2\)-type equations. Specifically, Equation \eqref{eq:1.4} belongs to \(\mZ(1)\) and Equation \eqref{eq:1.5} belongs to \(\mZ(\alpha^2)\). Note that from Definition \ref{def:1.2} condition (iii), \(2a \mid b\) implies \(2 \mid b\), which means that for any \(LA2\)-type equation, we allow \(b\) to be any even integer, extending the form \(b = 0\) in Equation \eqref{eq:1.4} and Equation \eqref{eq:1.5}. \\

\subsection{Main Results} In this paper, we are interested in studying \(LA2\)-type equation where it can be rewrite as Equation \eqref{eq:1.6} after applying Lagrange's method. From now onwards, we let \(\lambda = \dfrac{b}{2}\), \(\tau = \dfrac{D}{4}\), and \((\alpha,\beta)\) be the fundamental solution of Equation \eqref{eq:1.6} with \(\tau = \dfrac{D}{4}\). Note that \(\lambda \in \Z\), and \(\tau\) is a non-square positive integer. The first main result describes some properties of \(LA2\)-type equations, and determines the set of integer solutions of equation \(A \in \mZ(1)\). \\

\begin{theorem}
    \label{thm:1.3}
    Let \(A \in \mZ(j)\) for some \(j \in \Z\). The following are all true:
    \begin{enumerate}[label=\normalfont(\alph*)]
        \item \(a = 1\).
        \item Equation \(A\) can be rewrite as equation \(\tilde{u}^2 - \tau\tilde{v}^2 = j\), where
        \[\pa{\tilde{u}, \tilde{v}} = \pa{u + \lambda v + \dfrac{d}{2}, v + \dfrac{E}{D}}.\]
        \item If \(j = 1\), then the set of integer solutions of equation \(A \in \mZ(1)\), we denote as \(\mD_{A}\), is non-empty, has infinitely many elements, and is of the form
        \begin{align}
            \label{eq:1.12}
            \mD_{A} = \pc{(u,v) \in \Z^2: u = \tilde{u} - \lambda \tilde{v} + \dfrac{E}{D}\lambda- \dfrac{d}{2}, v = \tilde{v} - \dfrac{E}{D}, (\tilde{u}, \tilde{v}) \in \mP_{\tau}},
        \end{align}
        where \(\mP_{\tau}\) is the set of integer solutions of Equation \eqref{eq:1.6}.
    \end{enumerate}
\end{theorem}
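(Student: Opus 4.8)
The plan is to prove the three parts in order, since part (b) rests on (a) and part (c) rests on (b).

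For part (a), the strategy is to show that $a$ divides every coefficient $b,c,d,e,f$ and then invoke the normalization $\gcd(a,b,c,d,e,f)=1$ to force $a=1$. First I would use condition (iii) to write $b=2ab_0$, $c=ac_0$, $d=2ad_0$ for integers $b_0,c_0,d_0$, which immediately gives $a\mid b,c,d$ and yields $D=4a^2(b_0^2-c_0)$ and $E=2a(2ab_0d_0-e)$. Condition (ii) ($D\mid E$) then reduces to $2a(b_0^2-c_0)\mid(2ab_0d_0-e)$; since the right side is a multiple of $2a$ and $2a\mid 2ab_0d_0$, reading this modulo $2a$ forces $2a\mid e$, so I may write $e=2ae_0$. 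Setting $Q:=b_0^2-c_0$ and $P:=b_0d_0-e_0$, condition (ii) becomes simply $Q\mid P$, and one computes $D=4a^2Q$, $E=4a^2P$, $F=4a(ad_0^2-f)$, hence $N=16a^3\bigl(aP^2-Q(ad_0^2-f)\bigr)$.

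The crux of part (a), and the step I expect to be the main obstacle, is extracting $a\mid f$. Substituting the above into $j=N/(-4a^2D)$ and simplifying gives the clean identity
\[ j=-\frac{P^2}{Q}+d_0^2-\frac{f}{a}. \]
Because $Q\mid P$ we have $Q\mid P^2$, so $P^2/Q\in\Z$; since also $d_0^2\in\Z$ and $j\in\Z$ (as $A\in\mZ(j)$), the identity forces $f/a\in\Z$, i.e. $a\mid f$. Having shown $a\mid b,c,d,e,f$, the hypothesis $\gcd(a,b,c,d,e,f)=1$ yields $a\mid 1$, and as $a>0$ we conclude $a=1$.

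For part (b), with $a=1$ in hand I would complete the square twice. Completing the square in $u$ turns $A$ into $\tilde u^2-\tau v^2-\tfrac{E}{2}v-\tfrac{F}{4}=0$ with $\tilde u=u+\lambda v+\tfrac{d}{2}$, using $a=1$ so that $c-\tfrac{b^2}{4}=-\tau$, $e-\tfrac{bd}{2}=-\tfrac{E}{2}$ and $f-\tfrac{d^2}{4}=-\tfrac{F}{4}$. Completing the square in $v$ then produces $\tilde u^2-\tau\tilde v^2=-N/(4D)$ with $\tilde v=v+\tfrac{E}{4\tau}=v+\tfrac{E}{D}$ (using $\tau=D/4$), and since $-N/(4D)=N/(-4a^2D)=j$ we obtain $\tilde u^2-\tau\tilde v^2=j$. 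The divisibility facts from part (a) ($2\mid b$, $2\mid d$, and $D\mid E$) guarantee that $\lambda$, $\tfrac{d}{2}$ and $\tfrac{E}{D}$ are integers, so the change of variables $(u,v)\mapsto(\tilde u,\tilde v)$ and its inverse both preserve integrality. For part (c), taking $j=1$ makes part (b) read $\tilde u^2-\tau\tilde v^2=1$, which is exactly Pell's equation \eqref{eq:1.6}. The linear substitution of part (b) is invertible, with inverse $v=\tilde v-\tfrac{E}{D}$ and $u=\tilde u-\lambda\tilde v+\tfrac{E}{D}\lambda-\tfrac{d}{2}$, and by the integrality remarks it restricts to a bijection between $\mP_\tau$ and $\mD_A$. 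This bijection immediately gives the description \eqref{eq:1.12}, and since the classical theorem quoted after \eqref{eq:1.6} guarantees that $\mP_\tau$ is non-empty and infinite, the same holds for $\mD_A$.
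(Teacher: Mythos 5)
Your proposal is correct and follows essentially the same route as the paper: for (a) both arguments extract $a\mid e$ from $D\mid E$ together with condition (iii), and then extract $a\mid f$ from the integrality of $j=N/(-4a^2D)$ (your identity $j=-P^2/Q+d_0^2-f/a$ is just a reparametrized form of the paper's step $F=(E/D)E+4a^2j$), before invoking $\gcd(a,b,c,d,e,f)=1$. Parts (b) and (c) are likewise the paper's argument — completing the square twice is exactly the Lagrange rewriting used there, and the integral invertible substitution giving a bijection with $\mP_\tau$ is the same closing step.
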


\vspace{20pt}
Let \(A \in \mZ(1)\). Define a function \(\mF_{A}: \Z^2 \to \Z\) with
\begin{equation}    
    \label{eq:1.13}
    \mF_{A}(u,v) = u^2 + buv + cv^2 + du + ev + f,
\end{equation}
where the equation \(\mF_{A}(u,v) = 0\) is equivalent to \(A\) (from Theorem \ref{thm:1.3}(a), we let \(a = 1\)). Let \(x \in \R\), \(x > 0\), and consider the set
\begin{align}
    \label{eq:1.14}
    \mD_{A}(x) = \Bigl\{(u,v) \in \Z^2: \mF_{A}(u,v) = 0, |u| + |v| \leq x\Bigr\},
\end{align}
which is the set of lattice points \((u,v)\) in the region enclosed by \(|u| + |v| \leq x\) in the \(uv\) plane (the region is a square with side lengths \(\sqrt{2}x\) rotated by 45 degrees from the origin), satisfying equation \(A\). In the rest of paper, whenever we mention the function \(\mF_{A}(u,v)\) as defined in Equation \eqref{eq:1.13}, or the set \(\mD_{A}(x)\) as defined in Equation \eqref{eq:1.14}, we always assume that \(A \in \mZ(1)\). \\

Let \(\pd{\mD_{A}(x)}\) to be the number of elements in the set \(\mD_{A}(x)\). Note that for any finite number \(x > 0\), \(\pd{\mD_{A}(x)}\) is bounded above by the total number of lattice points \((u,v)\) in the region enclosed by \(|u| + |v| \leq x\) in the \(uv\) plane. In mathematical notation, \(\pd{\mD_{A}(x)} \leq \pe{x}^2 + \pa{\pe{x}+1}^2 < +\infty\) for any finite \(x > 0\). Also from Theorem \ref{thm:1.3}(c), note that \(\displaystyle\lim_{x \to +\infty}\mD_{A}(x) = \mD_{A}\), which implies \(\displaystyle\lim_{x \to +\infty} \pd{\mD_{A}(x)} = +\infty\). \\

Two interesting questions would be: Given a particular \(x > 0\), can we determine the number of elements in the set \(\mD_{A}(x)\), i.e. \(\pd{\mD_{A}(x)}\)? Given a particular \(x > 0\), can we determine what exactly is the set of solutions \(\mD_{A}(x)\)? Both questions will be our major focus in this paper. In this paper, we provide the formula of \(\pd{\mD_{A}(x)}\) (Theorem \ref{thm:1.4}), and we show that it is \textit{eventually} true for any \(x \geq \mL := \max\Bigl\{M'_l: l \in \{1,2,3,4\}\Bigr\}\), where the values of \(M'_l\) are \textit{finite} positive integers as defined in Equation \eqref{eq:3.1}. As a corollary, we provide all elements in the set \(\mD_{A}(x)\) for any \(x \geq \mL\) (Theorem \ref{thm:1.5}). \\

Let \(l \in \{1,2,3,4\}\) and \(m \in \N\). Define integers \(s^{(l)}_m\) and \(t^{(l)}_m\) as
\begin{align}
    \label{eq:1.18}
    s_m^{(l)} &= (-1)^{\pe{\frac{l}{2}}}u_m - (-1)^{\pe{\frac{l-1}{2}}}\lambda v_m + \pa{\dfrac{E}{D}\lambda- \dfrac{d}{2}}, \\
    \label{eq:1.19}
    t_m^{(l)} &= (-1)^{\pe{\frac{l-1}{2}}}v_m - \dfrac{E}{D},
\end{align}
where \(u_m\) and \(v_m\) are integers as defined in Equation \eqref{eq:1.9} and Equation \eqref{eq:1.10} respectively. We now propose the second main result, the formula of \(\pd{\mD_{A}(x)}\): \\

\begin{theorem}
    \label{thm:1.4}
    Let \(A \in \mZ(1)\). There exists positive integer \(\mL\) such that for any real number \(x \geq \mL\),
    \[\Bigl|\mD_{A}(x)\Bigr| = 2 + \displaystyle\sum_{l=1}^4\pe{\dfrac{\log\pa{\pe{x} - R_l + 1- Q_l} - \log P_l + \log \pa{2\sqrt{\tau}}}{\log \pa{\alpha + \beta\sqrt{\tau}}}},\]
    where for \(l \in \{1,2,3,4\}\), \(P_l\) is a positive real number as defined in Equation \eqref{eq:1.15}, and \(Q_l\) and \(R_l\) are integers as defined in Equation \eqref{eq:1.16} and Equation \eqref{eq:1.17} respectively, as follows:
    \begin{align}
        \label{eq:1.15}
        P_l &= \left\{
        \begin{aligned}
            &1-\lambda-(-1)^{l}\sqrt{\tau} \ & \ &\text{if} \ \lambda < (-1)^{l-1}\sqrt{\tau}, \\
            &1+\lambda+(-1)^{l}\sqrt{\tau} \ & \ &\text{if} \ \lambda > (-1)^{l-1}\sqrt{\tau}.
        \end{aligned}
        \right. \\
        \label{eq:1.16}
        Q_l &= \left\{
        \begin{aligned}
            &(-1)^{\pe{\frac{l-1}{2}}}\pa{\dfrac{E}{D}\lambda - \dfrac{E}{D} - \dfrac{d}{2}} \ & \ &\text{if} \ \lambda < (-1)^{l-1}\sqrt{\tau}, \\
            &(-1)^{\pe{\frac{l+1}{2}}}\pa{\dfrac{E}{D}\lambda + \dfrac{E}{D} - \dfrac{d}{2}} \ & \ &\text{if} \ \lambda > (-1)^{l-1}\sqrt{\tau}.
        \end{aligned}
        \right. \\
        \label{eq:1.17}
        R_l &= \left\{
        \begin{aligned}
            &1 \ & \ &\text{if} \ - \sqrt{\tau} + \dfrac{1-(-1)^l}{2} < \lambda < \sqrt{\tau} - \dfrac{1+(-1)^l}{2}, \\
            &0 \ & \ &\text{if} \ \lambda < - \sqrt{\tau} + \dfrac{1-(-1)^l}{2} \ \text{or} \ \lambda > \sqrt{\tau} - \dfrac{1+(-1)^l}{2}.
        \end{aligned}
        \right.
    \end{align}
    In particular, we have
    \[\mL := \max\Bigl\{M'_l : l \in \{1,2,3,4\}\Bigr\},\]
    where for \(l \in \{1,2,3,4\}\), \(M'_l\) is a positive integer defined as
    \begin{align}
        \label{eq:3.1}
        M'_l := \max\pc{\pd{s_{1}^{(l)}}, \pd{s_{N_0}^{(l)}}, \pd{s_{N_l}^{(l)}}} + \max\pc{\pd{t_{1}^{(l)}}, \pd{t_{N_0}^{(l)}}, \pd{t_{N_l}^{(l)}}},
    \end{align}
    where \(s_{m}^{(l)}\) and \(t_{m}^{(l)}\) are integers as defined in Equation \eqref{eq:1.18} and Equation \eqref{eq:1.19} respectively, and values \(N_0\) and \(N_l\) are positive integers as defined in Equation \eqref{eq:3.2} and Equation \eqref{eq:3.3} respectively, as follows:
    \begin{align}
        \label{eq:3.2}
        N_0 &:= \left\{
        \begin{aligned}
            &\min\pc{m \in \N : u_m > |\lambda|v_m + \pd{\dfrac{E}{D}\lambda- \dfrac{d}{2}}, v_m > \dfrac{\pd{E}}{D}} \ & \ &\text{if} \ |\lambda| < \sqrt{\tau}, \\
            &\min\pc{m \in \N : u_m < |\lambda|v_m - \pd{\dfrac{E}{D}\lambda- \dfrac{d}{2}}, v_m > \dfrac{\pd{E}}{D}} \ & \ &\text{if} \ |\lambda| > \sqrt{\tau}.
        \end{aligned}
        \right. \\
        \label{eq:3.3}
        N_l &:= \left\{
        \begin{aligned}
            &1 \ & \ &\text{if} \ |\lambda| < \sqrt{\tau}, \\
            &\max\pc{1, \left\lceil \dfrac{\log \pa{1 + \pd{\lambda} - (-1)^l \frac{\pd{\lambda}}{\lambda}\sqrt{\tau}} - \log\pa{2\sqrt{\tau}}}{\log\pa{\alpha + \beta\sqrt{\tau}}} \right\rceil} \ & \ &\text{if} \ |\lambda| > \sqrt{\tau}.
        \end{aligned}
        \right.
    \end{align}
\end{theorem}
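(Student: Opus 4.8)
The plan is to turn the counting of $\mD_{A}(x)$ into a Pell-lattice count via Theorem \ref{thm:1.3}, and then to analyze, family by family, how the quantity $\pd{u}+\pd{v}$ of the associated lattice points grows. First I would invoke Theorem \ref{thm:1.3}(c): since $A\in\mZ(1)$, the affine map $(\tilde u,\tilde v)\mapsto(u,v)=\pa{\tilde u-\lambda\tilde v+\frac{E}{D}\lambda-\frac{d}{2},\ \tilde v-\frac{E}{D}}$ is a bijection from $\mP_\tau$ onto $\mD_{A}$. Being affine and invertible it is injective, so the five disjoint pieces $\mP_\tau^{(k)}$ of \eqref{eq:1.7}--\eqref{eq:1.8} produce five disjoint families of lattice points with no double counting; thus $\pd{\mD_{A}(x)}$ is the sum of the contributions of the images of $\mP_\tau^{(0)},\ldots,\mP_\tau^{(4)}$ lying in $\pd{u}+\pd{v}\le x$. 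The two points of $\mP_\tau^{(0)}$ map to two fixed lattice points whose $\pd{u}+\pd{v}$ is a constant independent of $x$, so once $x$ exceeds that constant they are both counted, accounting for the ``$2$''. For each $l\in\{1,2,3,4\}$ the image of $\mP_\tau^{(l)}$ is exactly $\pc{(s_m^{(l)},t_m^{(l)}):m\in\N}$ from \eqref{eq:1.18}--\eqref{eq:1.19}, so the task reduces to counting $m\in\N$ with $g_l(m):=\pd{s_m^{(l)}}+\pd{t_m^{(l)}}\le\pe{x}$ (the floor is harmless since all coordinates are integers).

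Next I would obtain a closed form for $g_l(m)$. Writing $\gamma:=\alpha+\beta\sqrt\tau$ and using $\alpha^2-\beta^2\tau=1$, the identities \eqref{eq:1.9}--\eqref{eq:1.10} give $u_m=\frac12(\gamma^m+\gamma^{-m})$ and $v_m=\frac{1}{2\sqrt\tau}(\gamma^m-\gamma^{-m})$, so that $u_m-\sqrt\tau\,v_m=\gamma^{-m}\to 0^+$. Substituting into \eqref{eq:1.18}--\eqref{eq:1.19} writes $s_m^{(l)}$ and $t_m^{(l)}$ as fixed linear combinations of $\gamma^m$, $\gamma^{-m}$ and $1$. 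Since $\tau$ is a non-square the leading ($\gamma^m$) coefficients never vanish, so the signs of $s_m^{(l)}$ and $t_m^{(l)}$ stabilize for $m$ beyond a threshold; I would take $N_0$ of \eqref{eq:3.2} to be precisely the first index at which both signs have settled, the two cases $\pd{\lambda}\lessgtr\sqrt\tau$ distinguishing whether the $u_m$- or the $\lambda v_m$-term dominates the $s$-coordinate. For $m\ge N_0$ the absolute values open up and, collecting the $\gamma^m$-coefficient, one gets $g_l(m)=\frac{P_l}{2\sqrt\tau}\gamma^m+B_l\gamma^{-m}+C_l$ with $P_l=\pd{(-1)^{\pe{l/2}}\sqrt\tau-(-1)^{\pe{(l-1)/2}}\lambda}+1$, which is exactly \eqref{eq:1.15} after splitting on the position of $\lambda$ relative to $(-1)^{l-1}\sqrt\tau$ (the sign of the leading coefficient of $s_m^{(l)}$); the constant $C_l$ collects $\pm\pa{\frac{E}{D}\lambda-\frac{d}{2}}$ and $\mp\frac{E}{D}$, and is what $Q_l$ in \eqref{eq:1.16} records.

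I would then establish monotonicity: $g_l(m)=\frac{P_l}{2\sqrt\tau}\gamma^m+B_l\gamma^{-m}+C_l$ is strictly increasing once $\frac{P_l}{2\sqrt\tau}\gamma^{2m}>\pd{B_l}$, and $N_l$ of \eqref{eq:3.3} is chosen as the first index guaranteeing this (automatically $N_l=1$ when $\pd{\lambda}<\sqrt\tau$, since then $B_l\le 0$; requiring the displayed logarithm otherwise). With $g_l$ strictly increasing for $m\ge\max\pc{N_0,N_l}$, counting $\pc{m:g_l(m)\le\pe{x}}$ amounts to inverting the dominant term: $g_l(m)\le\pe{x}\iff\frac{P_l}{2\sqrt\tau}\gamma^m\le\pe{x}-Q_l-R_l+1$ on the relevant range, giving the count $\pe{(\log(\pe{x}-R_l+1-Q_l)-\log P_l+\log(2\sqrt\tau))/\log\gamma}$. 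Finally, with $\mL:=\max_l M'_l$ and $M'_l$ as in \eqref{eq:3.1}, the bound $x\ge\mL$ forces $g_l(m)\le\pe{x}$ for every $m\le\max\pc{1,N_0,N_l}$, so all indices below the stabilization and monotonicity thresholds are automatically inside the region and the closed-form count is exact; the same bound places the two $\mP_\tau^{(0)}$-points inside, supplying the ``$2$''.

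The main obstacle I anticipate is the passage from the exact integer inequality $g_l(m)\le\pe{x}$ to the clean inequality $\frac{P_l}{2\sqrt\tau}\gamma^m\le\pe{x}-Q_l-R_l+1$: one must show that the sub-leading term $B_l\gamma^{-m}$ together with the non-integer part of $C_l$ never pushes the value across an integer boundary except in the controlled way encoded by $R_l$. Pinning down $R_l\in\{0,1\}$---deciding, from the sign of $B_l$ and the fractional part of $C_l$ (governed by the position of $\lambda$ relative to $\pm\sqrt\tau$ and the parity of $l$, cf. \eqref{eq:1.17}), whether the threshold index shifts by one---is the delicate step. The thresholds $N_0,N_l$ and the bound $\mL$ exist precisely to quarantine the finitely many small-$m$ exceptions, so that this rounding analysis holds uniformly for all $x\ge\mL$.
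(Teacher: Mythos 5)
Your proposal follows essentially the same route as the paper: the decomposition of $\mD_{A}(x)$ into the five pairwise disjoint families coming from $\mP_{\tau}^{(0)},\ldots,\mP_{\tau}^{(4)}$ is the content of Section 6 (Lemma \ref{lem:6.1} and Equations \eqref{eq:6.9}--\eqref{eq:6.10}), the closed form $\pd{s_m^{(l)}}+\pd{t_m^{(l)}}=\frac{P_l}{2\sqrt{\tau}}(\alpha+\beta\sqrt{\tau})^m+B_l(\alpha+\beta\sqrt{\tau})^{-m}+C_l$ together with its inversion into the logarithmic count is Lemmas \ref{lem:7.3}--\ref{lem:7.5}, and the constant $2$ is Lemma \ref{lem:7.7}. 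The one step you flag as an anticipated obstacle --- showing the subleading term and the rounding encoded by $R_l$ never shift the count across an integer boundary --- is precisely where the paper does its real work (the proof that $\frac{P_l}{2\sqrt{\tau}}(\alpha+\beta\sqrt{\tau})^m+Q_l\notin\Z$ and the $n'\geq n+1$ argument in Lemma \ref{lem:7.4}); note also that in the paper $N_l$ controls the bound $\pd{B_l(\alpha+\beta\sqrt{\tau})^{-m}}<1$ (Lemma \ref{lem:3.8}) rather than monotonicity, which already holds from $N_0$ onward by Lemma \ref{lem:4.3}(b).
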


\vspace{20pt}
\noindent By applying Theorem \ref{thm:1.3} and Theorem \ref{thm:1.4}, we characterize the set \(\mD_{A}(x)\) as below. \\

\begin{theorem}
    \label{thm:1.5}
    Let \(A \in \mZ(1)\). There exists positive integer \(\mL\) such that for any real number \(x \geq \mL\), the set \(\mD_{A}(x)\) is of the form
    \begin{align*}
        &\mD_{A}(x) \\
        &= \pc{\pa{1 + \dfrac{E}{D}\lambda- \dfrac{d}{2}, - \dfrac{E}{D}}, \pa{-1 + \dfrac{E}{D}\lambda- \dfrac{d}{2}, - \dfrac{E}{D}}} \\
        &\hspace{20pt} \cup \bigcup_{l=1}^{4}\pc{\pa{s_m^{(l)}, t_m^{(l)}} : 1 \leq m \leq \pe{\dfrac{\log\pa{\pe{x} - R_l + 1- Q_l} - \log P_l + \log \pa{2\sqrt{\tau}}}{\log \pa{\alpha + \beta\sqrt{\tau}}}}},
    \end{align*}
    where for \(l \in \{1,2,3,4\}\), \(P_l\) is a positive real number as defined in Equation \eqref{eq:1.15}, and  \(s_m^{(l)}\), \(t_m^{(l)}\), \(Q_l\) and \(R_l\) are integers as defined in Equation \eqref{eq:1.18}, Equation \eqref{eq:1.19}, Equation \eqref{eq:1.16} and Equation \eqref{eq:1.17} respectively. In particular, \(\mL \in \N\) is the same value as described in Theorem \ref{thm:1.4}.
\end{theorem}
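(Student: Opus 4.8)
The plan is to realize \(\mD_{A}(x)\) as the image, under the affine bijection supplied by Theorem~\ref{thm:1.3}(c), of the portion of the Pell solution set \(\mP_{\tau}\) that is carried into the rotated square \(\pd{u}+\pd{v}\le x\). Write \(\Phi\) for the affine map \((\tilde u,\tilde v)\mapsto(u,v)\) of \eqref{eq:1.12}. Since Theorem~\ref{thm:1.3}(c) asserts that \(\Phi\) is a bijection from \(\mP_{\tau}\) onto \(\mD_{A}\), the set \(\mD_{A}(x)\) is exactly the image under \(\Phi\) of those \((\tilde u,\tilde v)\in\mP_{\tau}\) whose image \((u,v)\) satisfies \(\pd{u}+\pd{v}\le x\). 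Using the classical partition \(\mP_{\tau}=\bigcup_{k=0}^{4}\mP_{\tau}^{(k)}\) from \eqref{eq:1.7}--\eqref{eq:1.8}, it suffices to decide, family by family, which image points lie in the region. A direct substitution shows that \(\Phi\) sends \((\pm1,0)\in\mP_{\tau}^{(0)}\) to the two listed special points, and sends the \(m\)-th element of \(\mP_{\tau}^{(l)}\) to \(\pa{s_m^{(l)},t_m^{(l)}}\) as defined in \eqref{eq:1.18}--\eqref{eq:1.19}; thus the problem decouples into five counting problems.

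For the family \(\mP_{\tau}^{(0)}\) I would verify the numerical inequality \(\pd{\pm1+\frac{E}{D}\lambda-\frac{d}{2}}+\pd{\frac{E}{D}}\le\mL\); since \(\mL\ge M'_l\) dominates the defining quantities in \eqref{eq:3.1}, both special points lie in the region for every \(x\ge\mL\), contributing the constant \(2\) present in both Theorem~\ref{thm:1.4} and the present statement. For each \(l\in\{1,2,3,4\}\) the task reduces to identifying \(\pc{m\in\N : g_m^{(l)}\le x}\), where \(g_m^{(l)}:=\pd{s_m^{(l)}}+\pd{t_m^{(l)}}\). Because \(g_m^{(l)}\) is a nonnegative integer, the constraint is equivalent to \(g_m^{(l)}\le\pe{x}\), which is what produces \(\pe{x}\) rather than \(x\) in the cutoff. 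The heart of the argument is to show this set is the initial segment \(\pc{1,2,\dots,K_l(x)}\) with \(K_l(x)=\pe{\frac{\log\pa{\pe{x}-R_l+1-Q_l}-\log P_l+\log\pa{2\sqrt{\tau}}}{\log\pa{\alpha+\beta\sqrt{\tau}}}}\); granting this, Theorem~\ref{thm:1.4} already guarantees that \(K_l(x)\) equals the \(l\)-th summand of the cardinality formula, so the two descriptions are automatically consistent in size.

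The main obstacle, and the only place where real work is needed beyond Theorem~\ref{thm:1.4}, is establishing the initial-segment (monotonicity) property while resolving the absolute values in \(g_m^{(l)}\). From \eqref{eq:1.9}--\eqref{eq:1.10} one has \(u_m\sim\tfrac12\pa{\alpha+\beta\sqrt{\tau}}^m\) and \(v_m\sim\tfrac{1}{2\sqrt{\tau}}\pa{\alpha+\beta\sqrt{\tau}}^m\), so the leading coefficient of \(g_m^{(l)}\) is \(\pd{(-1)^{\pe{\frac{l}{2}}}\sqrt{\tau}-(-1)^{\pe{\frac{l-1}{2}}}\lambda}+1\), which a short case check against \eqref{eq:1.15} shows equals \(P_l\) in each regime \(\lambda\lessgtr(-1)^{l-1}\sqrt{\tau}\). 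I would then prove that the thresholds \(N_0\) of \eqref{eq:3.2} and \(N_l\) of \eqref{eq:3.3} are chosen precisely so that for all \(m\ge\max\pc{N_0,N_l}\) both \(s_m^{(l)}\) and \(t_m^{(l)}\) keep a constant sign; dropping the absolute values turns \(g_m^{(l)}\) into an explicit linear combination of \(u_m\) and \(v_m\) whose forward difference has leading term \(\pa{v_{m+1}-v_m}P_l>0\), hence is strictly increasing, and inverting \(g_m^{(l)}\le\pe{x}\) yields the logarithmic cutoff. Here \(Q_l\) in \eqref{eq:1.16} accounts for the constant shifts \(\frac{E}{D}\lambda-\frac{d}{2}\) and \(\frac{E}{D}\), while \(R_l\) in \eqref{eq:1.17} absorbs the \(+1\) rounding in the closed-form floors of \eqref{eq:1.9}--\eqref{eq:1.10} together with the integrality of \(g_m^{(l)}\).

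Finally, to promote this eventual monotonicity into a genuine initial-segment statement valid for every \(x\ge\mL\), I would control the finitely many small indices \(m<\max\pc{N_0,N_l}\) by the anchor bound \(M'_l\) of \eqref{eq:3.1}: on the ranges where \(u_m,v_m\) are monotone and the signs are stable, \(\pd{s_m^{(l)}}\) and \(\pd{t_m^{(l)}}\) are each bounded by their values at the endpoints \(m\in\pc{1,N_0,N_l}\), so every such small-\(m\) point satisfies \(g_m^{(l)}\le M'_l\le\mL\le x\) and already lies in the region. Combining this with strict monotonicity for \(m\ge\max\pc{N_0,N_l}\) shows that \(\pc{m:g_m^{(l)}\le\pe{x}}\) has no gaps, i.e.\ equals \(\pc{1,\dots,K_l(x)}\). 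Assembling the contribution of \(\mP_{\tau}^{(0)}\) with the four initial segments then yields exactly the displayed form of \(\mD_{A}(x)\), completing the proof. I expect the sign-stabilization bookkeeping in the case \(\pd{\lambda}>\sqrt{\tau}\) --- which is precisely why \(N_l\) is nontrivial --- to be the most delicate part of the argument.
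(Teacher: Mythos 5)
Your proposal is correct and follows essentially the same route as the paper: decompose $\mD_{A}(x)$ via the five Pell families $\mP_{\tau}^{(k)}$ pushed through the affine map of Theorem \ref{thm:1.3}(c), show the two $k=0$ points always lie in the region once $x \geq \mL$, and for each $l$ establish that $\{m : \pd{s_m^{(l)}}+\pd{t_m^{(l)}} \leq \pe{x}\}$ is an initial segment whose length is obtained by inverting the closed form of $\pd{s_m^{(l)}}+\pd{t_m^{(l)}}$ after the signs stabilize beyond $\max\{N_0,N_l\}$, with the small indices controlled by the anchor $M'_l$. This is precisely the content of the paper's Lemmas \ref{lem:5.1}, \ref{lem:5.5}, \ref{lem:7.2}--\ref{lem:7.7}, assembled in Section 9.
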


\vspace{10pt}
\begin{remark}
    The results for \(\mD_{A}(x)\) and \(\pd{\mD_{A}(x)}\) are only true for all \(x \geq \mL\). In other words, we do not describe the case when \(x < \mL\). Nevertheless, the values of \(M'_l\) as defined in Equation \eqref{eq:3.1} are \textit{finite}, thus for any \(x < \mL\), all elements in the set \(\mD_{A}(x)\) (and its number of elements \(\pd{\mD_{A}(x)}\)) can be computed manually or by using computer. \\
\end{remark}

\subsection{Outline of the Paper} In Section 2, we show the proof of Theorem \ref{thm:1.3}. In Section 3, we show the positive integer \(N_0\) is well-defined as defined in Equation \eqref{eq:3.2} (Lemma \ref{lem:3.4}). This implies positive integers \(M'_l\) for \(l = 1,2,3,4\) are well-defined and therefore \(\mL \in \N\) is well-defined. In Section 4, we define functions \(s^{(l)}(m)\) and \(t^{(l)}(m)\) for \(l = 1,2,3,4\) with domain \(m \in [1,+\infty)\) such that the restriction to the positive integers are equivalent to Equation \eqref{eq:1.18} and Equation \eqref{eq:1.19} respectively. The aim is to show the function \(\pd{s^{(l)}(m)} + \pd{t^{(l)}(m)}\) is strictly increasing in domain \([N_0, +\infty)\), where \(N_0\) is defined as Equation \eqref{eq:3.2} (Lemma \ref{lem:4.3}(b)), and it tends to \(+\infty\) as \(m \to +\infty\) (Lemma \ref{lem:4.3}(c)). In Section 5, we introduce and show the existence of the value \(N'_l\), where \(l \in \{1,2,3,4\}\) is defined as Equation \eqref{eq:5.10}, and show that any integer \(1 \leq m \leq \pe{N'_l}\) satisfies the condition \(\pd{s^{(l)}(m)} + \pd{t^{(l)}(m)} \leq M'_l\) (Lemma \ref{lem:5.5}(c)), while any integer \(m \geq \pe{N'_l} + 1\) does not satisfy the condition (Lemma \ref{lem:5.5}(a)). In Section 6, we define sets \(\mD_{A}^{(k)}(x)\) for \(k = 0,1,2,3,4\) so that \(\mD_A(x) = \displaystyle\bigcup_{k=0}^4\mD_A^{(k)}(x)\). We show that \(\mD_{A}^{(k)}(x)\) are all disjoint sets for \(k = 0,1,2,3,4\) (Lemma \ref{lem:6.1}), so that \(\pd{\mD_A(x)}\) can be rewrite in terms of \(\pd{\mD_{A}^{(k)}(x)}\). In Section 7, we provide formula for \(\pd{\mD_{A}^{(l)}(x)}\) for all real numbers \(x \geq M'_l\), where \(l \in \{1,2,3,4\}\) (Lemma \ref{lem:7.5}). Moreover, we provide formula for \(\pd{\mD_{A}^{(0)}(x)}\) for all real numbers \(x \geq \mL := \max \Bigl\{M'_l : l \in \{1,2,3,4\}\Bigr\}\) (Lemma \ref{lem:7.7}). Finally, in Section 8, we show the proof of Theorem \ref{thm:1.4}, and in Section 9, we show the proof of Theorem \ref{thm:1.5}.

\vspace{5pt}
\section{Proof of Theorem \ref{thm:1.3}}
\begin{proof}
    \textit{Proof of} (a)\textit{.} From Definition \ref{def:1.2}, condition (iii) says that \(a\) divides \(b\), \(c\) and \(d\). We claim that \(a\) also divides both \(e\) and \(f\) so that \(\gcd(a,b,c,d,e,f) = a\), and since \(A \in \mA\), it implies \(a = 1\). \\

    Note that \(4a^2 \mid b^2\) and \(a \mid c\) by Definition \ref{def:1.2}(iii). So \(4a^2 \mid D = b^2-4ac\). Since \(D \mid E\) from Definition \ref{def:1.2}(ii), so \(4a^2 \mid E = bd - 2ae\). Also from Definition \ref{def:1.2}(iii), \(4a^2 \mid bd\), so \(4a^2 \mid 2ae\) which implies \(a \mid  e\). \\

    From Equation \eqref{eq:1.11}, \(N = E^2 - DF\). Since \(A \in \mZ(j)\), \(j = \dfrac{N}{-4a^2D}\). So \(N = E^2 - DF = -4a^2Dj\). Then \(F = \pa{\dfrac{E}{D}}E + 4a^2j\) where \(\dfrac{E}{D} \in \Z\) since \(D \mid E\) by assumption. By previous proof, \(4a^2 \mid E\), so \(4a^2 \mid F = d^2 - 4af\). From Definition \ref{def:1.2}(iii), \(2a \mid d\), so \(4a^2 \mid d^2\), therefore \(4a^2 \mid 4af\) which implies \(a \mid f\). The result is then follows. \\
    
    \textit{Proof of} (b)\textit{.} We will apply Lagrange's method as discussed in Section 1. From Equation \eqref{eq:1.3}, equation \(A\) can be rewrite as \(U^2 - DV^2 = N\), where \(U = Dv + E\) and \(V = 2au + bv + d\). Since \(A \in \mZ(j)\), \(j = \dfrac{N}{-4a^2D}\) which is equivalent to \(N = -4a^2Dj\). So
    \begin{align}
        \notag
        U^2 - DV^2 &= N, \\
        \notag
        \pa{Dv + E}^2 - D\pa{2au + bv + d}^2 &= -4a^2Dj, \\
        \label{eq:2.1}
        \pa{u + \dfrac{b}{2a}v + \dfrac{d}{2a}}^2 - \dfrac{D}{4a^2}\pa{v + \dfrac{E}{D}}^2 &= j.
    \end{align}
    From Theorem \ref{thm:1.3}(a), \(a = 1\), so Equation \eqref{eq:2.1} can be rewrite as equation
    \begin{align}
        \label{eq:2.2}
        \tilde{u}^2 - \tau\tilde{v}^2 = j,
    \end{align}
    where \(\pa{\tilde{u}, \tilde{v}} = \pa{u + \lambda v + \dfrac{d}{2}, v + \dfrac{E}{D}}\). \\
    
    \textit{Proof of} (c)\textit{.} Based on discussion in Section 1, Theorem \ref{thm:1.3}(b) implies that any solution \(\pa{\tilde{u},\tilde{v}} \in \Z^2\) to Equation \eqref{eq:2.2} (with \(j = 1\)) such that there are integers \(u,v\) for which \(\tilde{u} = u + \lambda v + \dfrac{d}{2}\) and \(\tilde{v} = v + \dfrac{E}{D}\) will provide a solution \((u,v) \in \Z^2\) to equation \(A \in \mZ(1)\). \\

    Notice that Equation \eqref{eq:2.2} (with \(j = 1\)) is equivalent to Equation \eqref{eq:1.6}. Hence, the set of integer solutions to Equation \eqref{eq:2.2} (with \(j = 1\)) is the set \(\mP_{\tau}\), where \(\mP_{\tau}\) is non-empty and has infinitely many elements. For any \(\pa{\tilde{u},\tilde{v}} \in \mP_{\tau}\), we obtain \textit{unique} solution \((u,v) \in \Z^2\) to equation \(A \in \mZ(1)\) where
    \begin{align*}
        \tilde{v} = v + \dfrac{E}{D} &\implies v = \tilde{v} - \dfrac{E}{D}, \\
        \tilde{u} = u + \lambda v + \dfrac{d}{2} &\implies u = \tilde{u} - \lambda\pa{\tilde{v} - \dfrac{E}{D}} - \dfrac{d}{2} = \tilde{u} - \lambda\tilde{v} + \dfrac{E}{D}\lambda- \dfrac{d}{2}.
    \end{align*}
    Therefore, the set of integer solutions to \(A \in \mZ(1)\) is the set \(\mD_{A}\) as defined in Equation \eqref{eq:1.12}.
\end{proof}

\vspace{5pt}
\section{Positive Integers \(M'_l\) for \(l = 1,2,3,4\)}
The main objective of this section is to show positive integer \(N_0\) is well-defined, so that \(M'_l\) is well-defined for all \(l = 1,2,3,4\), and therefore \(\mL \in \N\) is well-defined. In other words, we would like to show the existence of \(N_0\) as defined in Equation \eqref{eq:3.2}. We begin by reconsidering Equation \eqref{eq:1.9} and Equation \eqref{eq:1.10} as functions with domain \([1,+\infty)\). \\

Define functions \(u: [1, +\infty) \to [\alpha, +\infty)\) and \(v: [1, +\infty) \to [\beta, +\infty)\) with
\begin{align}
    \label{eq:3.4}
    u(m) &= \dfrac{1}{2}\Bigl\{\pa{\alpha + \beta\sqrt{\tau}}^m + \pa{\alpha - \beta\sqrt{\tau}}^m\Bigr\}, \\
    \label{eq:3.5}
    v(m) &= \dfrac{1}{2\sqrt{\tau}}\Bigl\{\pa{\alpha + \beta\sqrt{\tau}}^m - \pa{\alpha - \beta\sqrt{\tau}}^m\Bigr\},
\end{align}
where the restriction of both functions to the positive integers are values \(u_m\) and \(v_m\) as defined in Equation \eqref{eq:1.9} and Equation \eqref{eq:1.10} respectively.

\vspace{5pt}
\begin{lemma}
    \label{lem:3.1}
    Consider both functions \(u(m)\) and \(v(m)\) in Equation \eqref{eq:3.4} and Equation \eqref{eq:3.5} respectively. The following are all true:
    \begin{enumerate}[label=\normalfont(\alph*)]
        \item For any \(m \in [1,+\infty)\), \(u(m)^2 - \tau v(m)^2 = 1\).
        \item Both functions \(u(m)\) and \(v(m)\) are strictly increasing.
        \item \(\displaystyle\lim_{m \to +\infty}u(m) = \displaystyle\lim_{m \to +\infty}v(m) = +\infty\).
        \item The function \(w: [1, +\infty) \to [1, +\infty)\) with \(w(m) = \dfrac{u(m)}{v(m)}\) is strictly decreasing.
        \item \(\displaystyle\lim_{m \to +\infty}w(m) = \sqrt{\tau}\).
    \end{enumerate}
\end{lemma}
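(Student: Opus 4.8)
The plan is to abbreviate $\gamma := \alpha + \beta\sqrt{\tau}$ and $\bar\gamma := \alpha - \beta\sqrt{\tau}$, so that $u(m) = \tfrac12(\gamma^m + \bar\gamma^m)$ and $v(m) = \tfrac{1}{2\sqrt{\tau}}(\gamma^m - \bar\gamma^m)$. The first step, on which all five parts rest, is to record three facts coming from the assumption that $(\alpha,\beta)$ is the fundamental solution of $\tilde{u}^2 - \tau\tilde{v}^2 = 1$: namely $\gamma\bar\gamma = \alpha^2 - \tau\beta^2 = 1$; that $\gamma > 1$ (since $\alpha,\beta \geq 1$ and $\tau \geq 2$, as $\tau$ is a non-square positive integer); and that $0 < \bar\gamma < 1$, where positivity follows from $\alpha^2 = 1 + \tau\beta^2 > \tau\beta^2$ forcing $\alpha > \beta\sqrt{\tau}$, and $\bar\gamma < 1$ follows from $\bar\gamma = \gamma^{-1}$. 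In particular $\bar\gamma = \gamma^{-1}$, hence $\ln\bar\gamma = -\ln\gamma$, a relation I will use repeatedly.

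For (a) I would simply expand $u(m)^2$ and $\tau v(m)^2$; the terms $\gamma^{2m}$ and $\bar\gamma^{2m}$ cancel in the difference, leaving $u(m)^2 - \tau v(m)^2 = (\gamma\bar\gamma)^m = 1$. For (b), since these are now genuine functions of a real variable, I would differentiate. Using $\ln\bar\gamma = -\ln\gamma$ one obtains the clean relations $u'(m) = \sqrt{\tau}\,(\ln\gamma)\,v(m)$ and $v'(m) = \tfrac{\ln\gamma}{\sqrt{\tau}}\,u(m)$; since $\gamma > 1$ gives $\ln\gamma > 0$, and since $u(m) > 0$ and $v(m) > 0$ for all $m \geq 1$ (because $\gamma^m > \bar\gamma^m > 0$), both derivatives are strictly positive, establishing strict monotonicity. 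For (c), $\gamma > 1$ gives $\gamma^m \to +\infty$ while $0 < \bar\gamma < 1$ gives $\bar\gamma^m \to 0$, so both $u(m)$ and $v(m)$ diverge to $+\infty$.

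The part that most rewards care is (d). Rather than differentiating the quotient blindly, I would feed the relations from (b) into the quotient rule: the numerator of $w'(m)$ is $u'(m)v(m) - u(m)v'(m) = \tfrac{\ln\gamma}{\sqrt{\tau}}\big(\tau v(m)^2 - u(m)^2\big)$, and here part (a) collapses the parenthesis to $-1$, giving $w'(m) = -\tfrac{\ln\gamma}{\sqrt{\tau}\,v(m)^2} < 0$. This is the one genuine idea in the lemma, reusing the Pell identity (a) to evaluate an otherwise messy derivative in closed form.

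Finally, for (e), I would factor $\gamma^m$ from numerator and denominator of $w(m) = \sqrt{\tau}\,\tfrac{\gamma^m + \bar\gamma^m}{\gamma^m - \bar\gamma^m}$; since $\bar\gamma/\gamma = \bar\gamma^2 \in (0,1)$, the ratio $(\bar\gamma/\gamma)^m \to 0$, so $w(m) \to \sqrt{\tau}$. There is no serious obstacle in the lemma; the only point demanding attention is to establish the preliminary inequalities $0 < \bar\gamma < 1 < \gamma$ and the identity $\gamma\bar\gamma = 1$ at the outset, since all five parts are essentially corollaries of those facts.
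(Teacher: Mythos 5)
Your proposal is correct, and for the substantive parts (a), (b), (d) it is the same argument as the paper's: expand and cancel using $(\alpha+\beta\sqrt{\tau})(\alpha-\beta\sqrt{\tau})=1$ for (a), differentiate using $\log(\alpha-\beta\sqrt{\tau})=-\log(\alpha+\beta\sqrt{\tau})$ to get $u'(m)=\sqrt{\tau}\log(\alpha+\beta\sqrt{\tau})\,v(m)$ and $v'(m)=\tfrac{1}{\sqrt{\tau}}\log(\alpha+\beta\sqrt{\tau})\,u(m)$ for (b), and feed the Pell identity into the quotient rule to get $w'(m)=-\tfrac{\log(\alpha+\beta\sqrt{\tau})}{\sqrt{\tau}\,v(m)^2}<0$ for (d). The only places you diverge are (c) and (e), and there your route is arguably cleaner: the paper proves (c) by noting that $(u_m)$ and $(v_m)$ are strictly increasing integer sequences and hence unbounded, and proves (e) by citing the classical facts about the convergents $u_m/v_m$ of $\sqrt{\tau}$ (Burton, Theorems 15.4 and 15.15) together with (d), whereas you obtain both directly from $(\alpha+\beta\sqrt{\tau})^m\to+\infty$ and $(\alpha-\beta\sqrt{\tau})^m\to 0$, making the lemma self-contained. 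Your explicit verification at the outset that $0<\alpha-\beta\sqrt{\tau}<1<\alpha+\beta\sqrt{\tau}$ (which the paper defers to Lemma 3.2) is also a point in your favour, since positivity of $v(m)$ is implicitly used in the paper's proof of (b).
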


\begin{proof}
    \textit{Proof of} (a)\textit{.} By substituting Equation \eqref{eq:3.4} and Equation \eqref{eq:3.5},
    \begin{align*}
        &u(m)^2 - \tau v(m)^2 \\
        &= \pa{\dfrac{1}{2}\pc{\pa{\alpha + \beta\sqrt{\tau}}^m + \pa{\alpha - \beta\sqrt{\tau}}^m}}^2 \\
        &\hspace{100pt} - \tau \pa{\dfrac{1}{2\sqrt{\tau}}\pc{\pa{\alpha + \beta\sqrt{\tau}}^m - \pa{\alpha - \beta\sqrt{\tau}}^m}}^2,
    \end{align*}
    \begin{align*}
        &= \dfrac{1}{4}\pa{\pc{\pa{\alpha + \beta\sqrt{\tau}}^m + \pa{\alpha - \beta\sqrt{\tau}}^m}^2 - \pc{\pa{\alpha + \beta\sqrt{\tau}}^m - \pa{\alpha - \beta\sqrt{\tau}}^m}^2}, \\
        &= \dfrac{1}{4}\pc{2\pa{\alpha + \beta\sqrt{\tau}}^m}\pc{2\pa{\alpha - \beta\sqrt{\tau}}^m}, \\
        &= (\alpha^2 - \beta^2\tau)^m, \\
        &= 1. \\
    \end{align*}
    
    \textit{Proof of} (b)\textit{.} Consider the derivatives \(u'(m)\) and \(v'(m)\). By applying \(\alpha - \beta\sqrt{\tau} = \pa{\alpha + \beta\sqrt{\tau}}^{-1}\), substituting \(u(m)\) as Equation \eqref{eq:3.4} and substituting \(v(m)\) as Equation \eqref{eq:3.5}, we obtain
    \begin{align}
        \notag
        u'(m) &= \dfrac{1}{2}\pc{(\alpha + \beta\sqrt{\tau})^m\log(\alpha + \beta\sqrt{\tau}) + (\alpha - \beta\sqrt{\tau})^m\log(\alpha - \beta\sqrt{\tau})}, \\
        \notag
        &= \dfrac{1}{2}\pc{(\alpha + \beta\sqrt{\tau})^m\log(\alpha + \beta\sqrt{\tau}) - (\alpha - \beta\sqrt{\tau})^m\log(\alpha + \beta\sqrt{\tau})}, \\
        \notag
        &= \dfrac{1}{2}\log(\alpha + \beta\sqrt{\tau})
        \pc{\pa{\alpha + \beta\sqrt{\tau}}^m - \pa{\alpha - \beta\sqrt{\tau}}^m}, \\
        \label{eq:3.6}
         u'(m) &= \pa{\sqrt{\tau}}\log(\alpha + \beta\sqrt{\tau})v(m) > 0,
    \end{align}
    \begin{align}
        \notag
        v'(m) &= \dfrac{1}{2\sqrt{\tau}}\pc{(\alpha + \beta\sqrt{\tau})^m\log(\alpha + \beta\sqrt{\tau}) - (\alpha - \beta\sqrt{\tau})^m\log(\alpha - \beta\sqrt{\tau})}, \\
        \notag
        &= \dfrac{1}{2\sqrt{\tau}}\pc{(\alpha + \beta\sqrt{\tau})^m\log(\alpha + \beta\sqrt{\tau}) + (\alpha - \beta\sqrt{\tau})^m\log(\alpha + \beta\sqrt{\tau})}, \\
        \notag
        &= \dfrac{1}{2\sqrt{\tau}}\log(\alpha + \beta\sqrt{\tau})\pc{(\alpha + \beta\sqrt{\tau})^m + (\alpha - \beta\sqrt{\tau})^m}, \\
        \label{eq:3.7}
        v'(m) &= \pa{\dfrac{1}{\sqrt{\tau}}}\log(\alpha + \beta\sqrt{\tau})u(m) > 0.
    \end{align}
    Therefore, both functions \(u(m)\) and \(v(m)\) are strictly increasing. \\

    \textit{Proof of} (c)\textit{.} By Lemma \ref{lem:3.1}(b), both functions \(u(m)\) and \(v(m)\) are strictly increasing. This implies both sequences \((u_m)\) and \((v_m)\) are strictly increasing. Since \(u_m\) and \(v_m\) are integers for all \(m \in \N\), we must have \((u_m) \to +\infty\) and \((v_m) \to +\infty\). This concludes that \(\displaystyle\lim_{m \to +\infty}u(m) = +\infty\) and \(\displaystyle\lim_{m \to +\infty}v(m) = +\infty\). \\

    \textit{Proof of} (d)\textit{.} Consider the derivatives \(w'(m)\). By applying Equation \eqref{eq:3.6}, Equation \eqref{eq:3.7} and Lemma \ref{lem:3.1}(a),
    \begin{align*}
        w'(m) &= \dfrac{u'(m)v(m) - u(m)v'(m)}{v(m)^2}, \\
        &= \dfrac{\pa{\sqrt{\tau}}\log(\alpha + \beta\sqrt{\tau})v(m)^2 - \pa{\dfrac{1}{\sqrt{\tau}}}\log(\alpha + \beta\sqrt{\tau})u(m)^2}{v(m)^2},
    \end{align*}
    \begin{align*}
        &= -\pa{\dfrac{1}{v(m)^2\sqrt{\tau}}}\log(\alpha + \beta\sqrt{\tau})\pa{u(m)^2 - \tau v(m)^2}, \\
        &= -\pa{\dfrac{1}{v(m)^2\sqrt{\tau}}}\log(\alpha + \beta\sqrt{\tau}) < 0,
    \end{align*}
    thus function \(w(m)\) is strictly decreasing. \\

    \textit{Proof of} (e)\textit{.} Given \(m \in \N\), the sequence \(\pa{\dfrac{u_m}{v_m}}\) is strictly decreasing and \(\pa{\dfrac{u_m}{v_m}} \to \sqrt{\tau}\) (see \cite{Burton2010}, Theorem 15.4 and Theorem 15.15). Together with Lemma \ref{lem:3.1}(d), we have
    \[\displaystyle\lim_{m \to +\infty} w(m) = \displaystyle\lim_{m \to +\infty} \dfrac{u(m)}{v(m)} = \displaystyle\lim_{m \to +\infty} \dfrac{u_m}{v_m} = \sqrt{\tau}.\]
\end{proof}

\vspace{5pt}
\begin{lemma}
    \label{lem:3.2}
    \(0 < \alpha - \beta\sqrt{\tau} < 1\). This implies \(\displaystyle\lim_{m \to +\infty} \pa{\alpha - \beta\sqrt{\tau}}^m = 0\).
\end{lemma}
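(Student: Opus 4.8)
The plan is to read off everything from the single algebraic identity that the fundamental solution satisfies. Since \((\alpha,\beta) \in \N^2\) is the fundamental solution of Equation \eqref{eq:1.6} with \(\tau = \dfrac{D}{4}\), by definition \(\alpha^2 - \tau\beta^2 = 1\), and this factors over \(\R\) as
\[
\pa{\alpha - \beta\sqrt{\tau}}\pa{\alpha + \beta\sqrt{\tau}} = \alpha^2 - \tau\beta^2 = 1.
\]
This is the key relation (indeed it was already invoked in the proof of Lemma \ref{lem:3.1}(b) in the form \(\alpha - \beta\sqrt{\tau} = \pa{\alpha + \beta\sqrt{\tau}}^{-1}\)), and the whole statement will fall out of controlling the sign and size of the conjugate factor \(\alpha + \beta\sqrt{\tau}\).

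First I would verify that \(\alpha + \beta\sqrt{\tau} > 1\). Since \(\tau\) is a non-square positive integer we have \(\tau \geq 2\), hence \(\sqrt{\tau} > 1\); and since \(\alpha,\beta \in \N\) we have \(\alpha \geq 1\) and \(\beta \geq 1\). Therefore \(\alpha + \beta\sqrt{\tau} \geq 1 + \sqrt{\tau} > 1\). (Alternatively, \(\alpha + \beta\sqrt{\tau} > 1\) is immediate from the very definition of the fundamental solution as an infimum over the set of \(\tilde u + \tilde v\sqrt{\tau} > 1\).) With this in hand, the factorization gives \(\alpha - \beta\sqrt{\tau} = \dfrac{1}{\alpha + \beta\sqrt{\tau}}\): being the reciprocal of a positive number it is strictly positive, and being the reciprocal of a number strictly larger than \(1\) it is strictly smaller than \(1\). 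This yields \(0 < \alpha - \beta\sqrt{\tau} < 1\).

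For the limit claim I would simply set \(r := \alpha - \beta\sqrt{\tau}\) and note that we have just shown \(r \in (0,1)\); the standard fact that a geometric sequence \(r^m\) with \(0 < r < 1\) converges to \(0\) then gives \(\displaystyle\lim_{m \to +\infty}\pa{\alpha - \beta\sqrt{\tau}}^m = 0\). I do not expect any genuine obstacle here: the only point meriting a sentence of care is the strict inequality \(\alpha + \beta\sqrt{\tau} > 1\), which is needed both to force positivity of the reciprocal and to keep \(r\) strictly below \(1\), and this is settled at once by the integrality and positivity of \(\alpha\) and \(\beta\) together with \(\tau \geq 2\).
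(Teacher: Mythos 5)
Your proposal is correct and follows essentially the same route as the paper: both rest on the identity \(\alpha - \beta\sqrt{\tau} = \pa{\alpha + \beta\sqrt{\tau}}^{-1}\) together with \(\alpha + \beta\sqrt{\tau} > 1\), from which the bounds and the limit follow immediately. You simply spell out the justification of \(\alpha + \beta\sqrt{\tau} > 1\) (via \(\alpha, \beta \geq 1\) and \(\tau \geq 2\), or directly from the definition of the fundamental solution) that the paper leaves implicit.
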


\begin{proof}
The lemma follows directly from \(\alpha + \beta\sqrt{\tau} > 1\) and \(\alpha - \beta\sqrt{\tau} = \pa{\alpha + \beta\sqrt{\tau}}^{-1}\).
\end{proof}

\vspace{5pt}
\subsection{Existence of Positive Integer \(N_0\)} We require Lemma \ref{lem:3.3} below:

\vspace{5pt}
\begin{lemma}
    \label{lem:3.3}
    There exists \(N_0' \in \N\) such that for all real numbers \(m \geq N_0'\),
    \[\left\{
    \begin{aligned}
        u(m) &> |\lambda|v(m) + \pd{\dfrac{E}{D}\lambda- \dfrac{d}{2}} \hspace{10pt} \text{and} \hspace{10pt} v(m) > \dfrac{\pd{E}}{D} \ & \ &\text{if} \ |\lambda| < \sqrt{\tau}, \\
        u(m) &< |\lambda|v(m) - \pd{\dfrac{E}{D}\lambda- \dfrac{d}{2}} \hspace{10pt} \text{and} \hspace{10pt} v(m) > \dfrac{\pd{E}}{D} \ & \ &\text{if} \ |\lambda| > \sqrt{\tau}.
    \end{aligned}
    \right.
    \]
\end{lemma}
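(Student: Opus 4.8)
The plan is to reduce the two inequalities in each case to the asymptotic behaviour of $v(m)$ and of the ratio $w(m) = u(m)/v(m)$, both of which are already controlled by Lemma~\ref{lem:3.1}. First I would observe that the case $|\lambda| = \sqrt{\tau}$ cannot occur: $\lambda \in \Z$ while $\tau$ is a non-square positive integer, so $\sqrt{\tau}$ is irrational. Hence exactly one of $|\lambda| < \sqrt{\tau}$ or $|\lambda| > \sqrt{\tau}$ holds, and the stated dichotomy is exhaustive. The inequality $v(m) > \pd{E}/D$ is then immediate in both cases: by Lemma~\ref{lem:3.1}(c) we have $v(m) \to +\infty$, and by Lemma~\ref{lem:3.1}(b) $v(m)$ is strictly increasing, so there is a finite threshold beyond which $v(m)$ exceeds the constant $\pd{E}/D$. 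It therefore remains to treat the first inequality, which I rewrite using $u(m) = w(m)v(m)$ as $v(m)\pa{w(m) - |\lambda|} > \pd{\dfrac{E}{D}\lambda - \dfrac{d}{2}}$ in the first case and as $v(m)\pa{|\lambda| - w(m)} > \pd{\dfrac{E}{D}\lambda - \dfrac{d}{2}}$ in the second.

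Consider the case $|\lambda| < \sqrt{\tau}$. By Lemma~\ref{lem:3.1}(d)--(e) the function $w$ is strictly decreasing with $\lim_{m\to+\infty} w(m) = \sqrt{\tau}$, so $w(m) > \sqrt{\tau}$ for every $m \in [1,+\infty)$; consequently $w(m) - |\lambda| > \sqrt{\tau} - |\lambda| > 0$, a fixed positive constant. Combining this with $v(m) \to +\infty$ gives $v(m)\pa{w(m) - |\lambda|} \geq v(m)\pa{\sqrt{\tau} - |\lambda|} \to +\infty$, so the product eventually surpasses the constant on the right-hand side. Taking the larger of the two resulting thresholds yields the required $N_0'$ in this case.

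Consider the case $|\lambda| > \sqrt{\tau}$, which I expect to be the main obstacle. Here the monotonicity of $w$ alone is not enough: one cannot claim $w(m) < |\lambda|$ for all $m$, only that it holds eventually, so the value of the limit (not just the decreasing behaviour) is essential. Since $\lim_{m\to+\infty} w(m) = \sqrt{\tau} < |\lambda|$, the definition of the limit provides some $m_0$ with $w(m) < \dfrac{|\lambda| + \sqrt{\tau}}{2}$ for all real $m \geq m_0$, whence $|\lambda| - w(m) > \dfrac{|\lambda| - \sqrt{\tau}}{2} > 0$ for such $m$. As before $v(m) \to +\infty$, so $v(m)\pa{|\lambda| - w(m)} \to +\infty$ and eventually exceeds $\pd{\dfrac{E}{D}\lambda - \dfrac{d}{2}}$. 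Combining the threshold from this estimate with the $v(m) > \pd{E}/D$ threshold and with $m_0$ gives $N_0'$ in this case.

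Finally I would set $N_0'$ to be a positive integer at least as large as the finitely many thresholds produced above (rounding up where necessary), so that all the stated inequalities hold simultaneously for every real $m \geq N_0'$. The only genuinely delicate step is the second case, where securing a positive lower bound for $|\lambda| - w(m)$ forces one to invoke the value $\lim_{m\to+\infty} w(m) = \sqrt{\tau}$ rather than mere monotonicity; everything else follows from the divergence of $v(m)$ together with the constancy of the right-hand sides.
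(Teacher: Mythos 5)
Your proposal is correct and follows essentially the same route as the paper: both arguments reduce the first inequality to the behaviour of the ratio $w(m)=u(m)/v(m)$, using $w(m)>\sqrt{\tau}$ for all $m$ in the case $|\lambda|<\sqrt{\tau}$ and the limit $w(m)\to\sqrt{\tau}$ (to get $|\lambda|-w(m)>\tfrac{|\lambda|-\sqrt{\tau}}{2}$ eventually) in the case $|\lambda|>\sqrt{\tau}$, combined with $v(m)\to+\infty$ and a final maximum of thresholds. Your explicit remark that $|\lambda|=\sqrt{\tau}$ is impossible is a small addition the paper leaves implicit, but otherwise the two proofs coincide.
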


\begin{proof}
    \textit{Proof of case \(|\lambda| < \sqrt{\tau}\).} We begin with showing the existence of \(\mN_0' \in \N\) so that \(v(m) > \dfrac{\pd{E}}{D}\) for all \(m \geq \mN_0'\). If \(E = 0\) then \(\mN_0' = 1\) will works since \(v(m) \geq v(1) > 0\) for all \(m \geq 1\). Suppose that \(|E|> 0\). From Lemma \ref{lem:3.1}(c), \(\displaystyle\lim_{m \to +\infty}v(m) = +\infty\), hence there exists \(\mN_0' \in \N\) such that \(v(m) > \dfrac{\pd{E}}{D}\) for all \(m \geq \mN_0'\). \\
    
    Next, we show the existence of \(\mN_0'' \in \N\) so that \(u(m) > |\lambda|v(m) + \pd{\dfrac{E}{D}\lambda- \dfrac{d}{2}}\) for all \(m \geq \mN_0''\). From Lemma \ref{lem:3.1}(d) and Lemma \ref{lem:3.1}(e), \(\dfrac{u(m)}{v(m)} > \sqrt{\tau}\) for all \(m \in (0, +\infty)\), which is \(\dfrac{u(m)}{v(m)} > |\lambda| + \pa{\sqrt{\tau} - |\lambda|}\). From Lemma \ref{lem:3.1}(c), \(\displaystyle\lim_{m \to +\infty}v(m) = +\infty\), this implies \(\displaystyle\lim_{m \to +\infty}\dfrac{1}{v(m)}\pd{\dfrac{E}{D}\lambda- \dfrac{d}{2}} = 0\). Therefore for \(\sqrt{\tau} - |\lambda| > 0\) (by assumption), there exists \(\mN_0'' \in \N\) such that \(\sqrt{\tau} - |\lambda| > \dfrac{1}{v(m)}\pd{\dfrac{E}{D}\lambda- \dfrac{d}{2}}\) for all \(m \geq \mN_0''\). So, we have
    \begin{align*}
        \dfrac{u(m)}{v(m)} > |\lambda| + \pa{\sqrt{\tau} - |\lambda|} &> |\lambda| + \dfrac{1}{v(m)}\pd{\dfrac{E}{D}\lambda- \dfrac{d}{2}}, \\
        \implies u(m) &> |\lambda|v(m) + \pd{\dfrac{E}{D}\lambda- \dfrac{d}{2}},
    \end{align*}
    for all \(m \geq \mN_0''\). The existence of \(N_0'\) is then by just letting \(N_0' = \max\pc{\mN_0', \mN_0''} \in \N\). \\

    \textit{Proof of case \(|\lambda| > \sqrt{\tau}\).} By following the same approach in the case \(|\lambda| < \sqrt{\tau}\), there exists \(\mN_0' \in \N\) such that \(v(m) > \dfrac{\pd{E}}{D}\) for all \(m \geq \mN_0'\). We claim that there exists \(\mN_0'' \in \N\) such that \(u(m) < |\lambda|v(m) - \pd{\dfrac{E}{D}\lambda- \dfrac{d}{2}}\) for all \(m \geq \mN_0''\). From Lemma \ref{lem:3.1}(c), \(\displaystyle\lim_{m \to +\infty}v(m) = +\infty\), this implies \(\displaystyle\lim_{m \to +\infty}\dfrac{1}{v(m)}\pd{\dfrac{E}{D}\lambda- \dfrac{d}{2}} = 0\). Therefore for \(\dfrac{|\lambda| - \sqrt{\tau}}{2} > 0\) (by assumption), there exists \(\mN_0''' \in \N\) such that for all \(m \geq \mN_0'''\),
    \[\dfrac{1}{v(m)}\pd{\dfrac{E}{D}\lambda- \dfrac{d}{2}} < \dfrac{|\lambda| - \sqrt{\tau}}{2}, \ \text{or equivalently,} \ -\pa{\dfrac{|\lambda| - \sqrt{\tau}}{2}} < -\dfrac{1}{v(m)}\pd{\dfrac{E}{D}\lambda- \dfrac{d}{2}}.\]
    By applying Lemma \ref{lem:3.1}(d) and Lemma \ref{lem:3.1}(e), \(\displaystyle\lim_{m \to +\infty} \pa{\dfrac{u(m)}{v(m)} - \sqrt{\tau}} \to 0\). Hence for \(\dfrac{|\lambda| - \sqrt{\tau}}{2} > 0\) (by assumption), there exists \(\mN_0'' \in \N\), with \(\mN_0'' \geq \mN_0'''\) such that \(\dfrac{u(m)}{v(m)} - \sqrt{\tau} < \dfrac{|\lambda| - \sqrt{\tau}}{2}\) for all \(m \geq \mN_0''\). So we have
    \begin{align*}
        \dfrac{u(m)}{v(m)} - \sqrt{\tau} < \dfrac{|\lambda| - \sqrt{\tau}}{2} &= \pa{|\lambda| - \sqrt{\tau}} - \pa{\dfrac{|\lambda| - \sqrt{\tau}}{2}}, \\
        &< \pa{|\lambda| - \sqrt{\tau}} - \dfrac{1}{v(m)}\pd{\dfrac{E}{D}\lambda- \dfrac{d}{2}}, \\
        \implies u(m) &< |\lambda|v(m) - \pd{\dfrac{E}{D}\lambda- \dfrac{d}{2}},
    \end{align*}
    for all \(m \geq \mN_0''\). The existence of \(N_0'\) is then by just letting \(N_0' = \max\pc{\mN_0', \mN_0''} \in \N\).
\end{proof}

\vspace{5pt}
\begin{lemma}
    \label{lem:3.4}
    Positive integer \(N_0 \in \N\) is well-defined as defined in Equation \eqref{eq:3.2}.
\end{lemma}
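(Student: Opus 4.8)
The plan is to reduce the statement to showing that the set appearing in Equation \eqref{eq:3.2} is non-empty, after which the well-ordering principle of \(\N\) immediately yields the existence of its minimum. Since the definition of \(N_0\) splits into two cases according to whether \(|\lambda| < \sqrt{\tau}\) or \(|\lambda| > \sqrt{\tau}\), I would first observe that exactly one of these holds: because \(\tau\) is a non-square positive integer, \(\sqrt{\tau}\) is irrational, whereas \(\lambda = \tfrac{b}{2} \in \Z\) (as \(2a \mid b\) and \(a = 1\) by Theorem \ref{thm:1.3}(a)); hence \(|\lambda| \neq \sqrt{\tau}\), and the two cases in Equation \eqref{eq:3.2} are jointly exhaustive so that the definition is not vacuous on account of the excluded boundary.

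Next I would invoke Lemma \ref{lem:3.3}, which supplies an \(N_0' \in \N\) such that the relevant inequalities hold for all real numbers \(m \geq N_0'\), in particular at the integer \(m = N_0'\). Since the functions \(u(m)\) and \(v(m)\) of Equation \eqref{eq:3.4} and Equation \eqref{eq:3.5} restrict, at positive integer arguments, to the sequence values \(u_m\) and \(v_m\) of Equation \eqref{eq:1.9} and Equation \eqref{eq:1.10}, the inequalities of Lemma \ref{lem:3.3} evaluated at \(m = N_0'\) are \emph{exactly} the membership conditions of the set whose minimum defines \(N_0\). Thus \(N_0'\) lies in that set, so the set is a non-empty subset of \(\N\).

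Finally, by the well-ordering principle a non-empty subset of \(\N\) has a least element, so the minimum in Equation \eqref{eq:3.2} exists and is automatically a positive integer; therefore \(N_0\) is well-defined. I do not anticipate a genuine obstacle here, since the substantive work is carried entirely by Lemma \ref{lem:3.3}. The only points requiring care are the bookkeeping observation that the real-variable inequalities, when specialized to an integer argument, coincide with the integer-sequence conditions in Equation \eqref{eq:3.2}, and the irrationality remark ruling out the excluded case \(|\lambda| = \sqrt{\tau}\); both are immediate once stated.
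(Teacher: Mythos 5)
Your proposal is correct and follows essentially the same route as the paper: non-emptiness of the set in Equation \eqref{eq:3.2} is obtained from Lemma \ref{lem:3.3}, and the well-ordering principle then yields the minimum. The added remarks (irrationality of \(\sqrt{\tau}\) making the two cases exhaustive, and the restriction of \(u(m), v(m)\) to integer arguments) are harmless elaborations of points the paper leaves implicit.
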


\begin{proof}
    We show the existence of \(N_0\). Consider the set \(\mS_0\):
    \[\mS_0 = \left\{
    \begin{aligned}
        &\pc{m \in \N : u_m > |\lambda|v_m + \pd{\dfrac{E}{D}\lambda- \dfrac{d}{2}}, v_m > \dfrac{\pd{E}}{D}} \ & \ &\text{if} \ |\lambda| < \sqrt{\tau}, \\
        &\pc{m \in \N : u_m < |\lambda|v_m - \pd{\dfrac{E}{D}\lambda- \dfrac{d}{2}}, v_m > \dfrac{\pd{E}}{D}} \ & \ &\text{if} \ |\lambda| > \sqrt{\tau}.
    \end{aligned}
    \right.\]
    Note that \(\mS_0 \subseteq \N\) and \(\mS_0 \neq \varnothing\) (by Lemma \ref{lem:3.3}). By well-ordering principle, \(\min \mS_0\) exists, which is exactly \(N_0 \in \N\).
\end{proof}

\vspace{5pt}
\subsection{Lemmas Related to Positive Integer \(N_l\)} We end this section by listing a few more lemmas (Lemma \ref{lem:3.5}, Lemma \ref{lem:3.6}, Lemma \ref{lem:3.7}, and Lemma \ref{lem:3.8}). Lemma \ref{lem:3.8} will be useful in Section 7, Lemma \ref{lem:7.3}.

\vspace{5pt}
\begin{lemma}
    \label{lem:3.5}
    If \(|\lambda| < \sqrt{\tau}\), then for any \(m \in [1, +\infty)\),
    \begin{align*}
        \pd{\dfrac{1}{2\sqrt{\tau}}\pa{1 \pm \lambda - \sqrt{\tau}}\pa{\alpha - \beta\sqrt{\tau}}^m} < 1.
    \end{align*}
\end{lemma}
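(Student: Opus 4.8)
The plan is to factor the quantity inside the absolute value as a product of a constant (independent of $m$) and the geometric term $\pa{\alpha - \beta\sqrt{\tau}}^m$, and then to bound each factor separately: I will use the hypothesis $|\lambda| < \sqrt{\tau}$ to control the constant factor and Lemma \ref{lem:3.2} to control the geometric factor. Combining the two bounds will give the claim.

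First I would handle the geometric factor. By Lemma \ref{lem:3.2} we have $0 < \alpha - \beta\sqrt{\tau} < 1$, so for every $m \in [1, +\infty)$ the power $\pa{\alpha - \beta\sqrt{\tau}}^m$ is positive and, the base lying in $(0,1)$ with $m \geq 1$, it is bounded above by $\alpha - \beta\sqrt{\tau} < 1$. Hence $0 < \pa{\alpha - \beta\sqrt{\tau}}^m < 1$, and in particular $\pd{\pa{\alpha - \beta\sqrt{\tau}}^m} = \pa{\alpha - \beta\sqrt{\tau}}^m < 1$.

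Next I would bound the constant factor $\dfrac{1}{2\sqrt{\tau}}\pd{1 \pm \lambda - \sqrt{\tau}}$. The hypothesis $|\lambda| < \sqrt{\tau}$ gives $-\sqrt{\tau} < \pm\lambda < \sqrt{\tau}$ for either choice of sign, so $1 - 2\sqrt{\tau} < 1 \pm \lambda - \sqrt{\tau} < 1$. Since $\tau$ is a non-square positive integer we have $\tau \geq 2$, hence $\sqrt{\tau} > \tfrac12$, which yields $1 < 2\sqrt{\tau}$ and $1 - 2\sqrt{\tau} > -2\sqrt{\tau}$. Chaining these, $-2\sqrt{\tau} < 1 \pm \lambda - \sqrt{\tau} < 2\sqrt{\tau}$, that is $\pd{1 \pm \lambda - \sqrt{\tau}} < 2\sqrt{\tau}$, and therefore $\dfrac{1}{2\sqrt{\tau}}\pd{1 \pm \lambda - \sqrt{\tau}} < 1$. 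This is the step where $|\lambda| < \sqrt{\tau}$ is essential, and the uniform bound $\pd{1 \pm \lambda - \sqrt{\tau}} < 2\sqrt{\tau}$ deals with both signs at once, avoiding any case split on the sign of $1 \pm \lambda - \sqrt{\tau}$.

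Finally I would multiply the two bounds. Both factors are nonnegative, the constant factor is strictly less than $1$, and the geometric factor is strictly less than $1$, so their product satisfies $\dfrac{1}{2\sqrt{\tau}}\pd{1 \pm \lambda - \sqrt{\tau}} \cdot \pa{\alpha - \beta\sqrt{\tau}}^m < 1$. Because the geometric factor is positive, $\pd{\tfrac{1}{2\sqrt{\tau}}\pa{1 \pm \lambda - \sqrt{\tau}}\pa{\alpha - \beta\sqrt{\tau}}^m}$ equals exactly this product, so the asserted inequality follows. There is no genuine obstacle here; the only points demanding care are keeping every inequality strict and bounding $\pd{1 \pm \lambda - \sqrt{\tau}}$ by $2\sqrt{\tau}$ uniformly in the sign rather than splitting into cases.
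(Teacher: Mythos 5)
Your proof is correct and follows essentially the same route as the paper: both arguments bound the $m$-independent factor by showing $\pd{\tfrac{1}{2\sqrt{\tau}}\pa{1\pm\lambda-\sqrt{\tau}}}<1$ from $-\sqrt{\tau}<\pm\lambda<\sqrt{\tau}$, bound the geometric factor $\pa{\alpha-\beta\sqrt{\tau}}^m$ by $1$ via Lemma \ref{lem:3.2}, and multiply. The only difference is cosmetic: you rescale the interval for $1\pm\lambda-\sqrt{\tau}$ to $\pa{-2\sqrt{\tau},2\sqrt{\tau}}$ before dividing, while the paper divides first and then uses $\tfrac{1}{2\sqrt{\tau}}<1$.
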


\begin{proof}
    By Lemma \ref{lem:3.2}, \(0 < \alpha - \beta\sqrt{\tau} < 1\) and \(\displaystyle\lim_{m \to +\infty} \pa{\alpha - \beta\sqrt{\tau}}^m = 0\), hence for any \(m \in [1, +\infty)\),
    \[\pd{\dfrac{1}{2\sqrt{\tau}}\pa{1 \pm \lambda - \sqrt{\tau}}\pa{\alpha - \beta\sqrt{\tau}}^m} \leq \pd{\dfrac{1}{2\sqrt{\tau}}\pa{1 \pm \lambda - \sqrt{\tau}}\pa{\alpha - \beta\sqrt{\tau}}}.\]
    So, it is sufficient to show \(\pd{\dfrac{1}{2\sqrt{\tau}}\pa{1 \pm \lambda - \sqrt{\tau}}\pa{\alpha - \beta\sqrt{\tau}}} < 1\). Since \(-\sqrt{\tau} < \lambda < \sqrt{\tau}\) (by assumption), then \(-\sqrt{\tau} < -\lambda < \sqrt{\tau}\). This follows that
    \begin{align*}
        -\sqrt{\tau} < \pm\lambda < \sqrt{\tau} &\implies 1-2\sqrt{\tau} < 1\pm\lambda-\sqrt{\tau} < 1, \\
        &\implies \dfrac{1}{2\sqrt{\tau}}\pa{1-2\sqrt{\tau}} < \dfrac{1}{2\sqrt{\tau}}\pa{1\pm\lambda-\sqrt{\tau}} < \dfrac{1}{2\sqrt{\tau}}, \\
        &\implies \dfrac{1}{2\sqrt{\tau}}-1 < \dfrac{1}{2\sqrt{\tau}}\pa{1\pm\lambda-\sqrt{\tau}} < \dfrac{1}{2\sqrt{\tau}}, \\
        &\implies -1 < \dfrac{1}{2\sqrt{\tau}}\pa{1 \pm \lambda-\sqrt{\tau}} < 1, \\
        &\implies \pd{\dfrac{1}{2\sqrt{\tau}}\pa{1 \pm \lambda - \sqrt{\tau}}} < 1, \\
        &\implies \pd{\dfrac{1}{2\sqrt{\tau}}\pa{1 \pm \lambda - \sqrt{\tau}}\pa{\alpha - \beta\sqrt{\tau}}} < 1.
    \end{align*}
\end{proof}

\vspace{5pt}
\begin{lemma}
    \label{lem:3.6}
    For any \(l \in \{1,2,3,4\}\), the value
    \[\dfrac{\log \pa{1 + \pd{\lambda} - (-1)^l \frac{\pd{\lambda}}{\lambda}\sqrt{\tau}} - \log\pa{2\sqrt{\tau}}}{\log\pa{\alpha + \beta\sqrt{\tau}}},\]
    is never an integer.
\end{lemma}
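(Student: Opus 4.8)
The plan is to argue by contradiction. Suppose that for some \(l \in \{1,2,3,4\}\) the displayed quotient equals an integer \(n \in \Z\). Since the expression contains the factor \(\frac{\pd{\lambda}}{\lambda}\), we implicitly assume \(\lambda \neq 0\) (this quantity is only invoked in the regime \(\pd{\lambda} > \sqrt{\tau}\), where \(\pd{\lambda} \geq 2\)). Write \(s := \frac{\pd{\lambda}}{\lambda} \in \{-1,1\}\) and set \(c := (-1)^l s \in \{-1,1\}\), so that the argument of the first logarithm is \(1 + \pd{\lambda} - c\sqrt{\tau}\). Because the quotient equals a real number \(n\), both logarithms are defined, so in particular \(1 + \pd{\lambda} - c\sqrt{\tau} > 0\).

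First I would exponentiate. As \(\alpha + \beta\sqrt{\tau} > 1\) gives \(\log\pa{\alpha + \beta\sqrt{\tau}} > 0\), the hypothesis that the quotient equals \(n\) is equivalent to
\[\dfrac{1 + \pd{\lambda} - c\sqrt{\tau}}{2\sqrt{\tau}} = \pa{\alpha + \beta\sqrt{\tau}}^n.\]
Next I would place the right-hand side in \(\Z[\sqrt{\tau}]\). For every \(n \in \Z\) one has \(\pa{\alpha + \beta\sqrt{\tau}}^n = A_n + B_n\sqrt{\tau}\) with \(A_n, B_n \in \Z\): for \(n \geq 0\) this is the binomial expansion, and for \(n < 0\) it follows from \(\pa{\alpha + \beta\sqrt{\tau}}^{-1} = \alpha - \beta\sqrt{\tau} \in \Z[\sqrt{\tau}]\), which holds because \(\alpha^2 - \beta^2\tau = 1\) (concretely \(A_m = u_m\) and \(B_m = v_m\) for \(m \geq 1\), by Equation \eqref{eq:1.9} and Equation \eqref{eq:1.10}). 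Clearing the denominator \(2\sqrt{\tau}\) then turns the previous identity into
\[1 + \pd{\lambda} - c\sqrt{\tau} = 2\tau B_n + 2A_n\sqrt{\tau}.\]

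Finally I would compare coefficients. Since \(\tau\) is a non-square positive integer, \(\sqrt{\tau}\) is irrational, so \(\{1, \sqrt{\tau}\}\) is linearly independent over \(\mathbb{Q}\); matching the coefficients of \(\sqrt{\tau}\) on the two sides gives \(-c = 2A_n\). This is the contradiction: \(c \in \{-1,1\}\) is odd whereas \(2A_n\) is even. Hence the quotient is never an integer. I do not expect any genuine obstacle here; the one decisive feature is the factor \(2\) in the denominator \(2\sqrt{\tau}\), which forces the coefficient of \(\sqrt{\tau}\) to be even and so clashes with the unit value \(\pm 1\) supplied by \(-c\). The only point needing care is the integrality of \(A_n, B_n\) for negative exponents, handled above.
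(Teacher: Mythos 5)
Your proof is correct and follows essentially the same route as the paper's: exponentiate the assumed integer identity, write \(\pa{\alpha+\beta\sqrt{\tau}}^{K}\) as \(A+B\sqrt{\tau}\) with \(A,B\in\Z\), and compare \(\sqrt{\tau}\)-coefficients to obtain the parity contradiction \(2A=\mp 1\). Your explicit appeal to the irrationality of \(\sqrt{\tau}\) and your remark that \(\lambda\neq 0\) in the relevant regime are small clarifications the paper leaves implicit, but the argument is the same.
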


\begin{proof}
    Suppose that \(\dfrac{\log \pa{1 + \pd{\lambda} - (-1)^l \frac{\pd{\lambda}}{\lambda}\sqrt{\tau}} - \log\pa{2\sqrt{\tau}}}{\log\pa{\alpha + \beta\sqrt{\tau}}} = K\) for some \(K \in \Z\). This follows that
    \begin{align*}
        \log \pa{\dfrac{1}{2\sqrt{\tau}}\pa{1 + \pd{\lambda} - (-1)^l \frac{\pd{\lambda}}{\lambda}\sqrt{\tau}}} &= \log \pa{\alpha + \beta\sqrt{\tau}}^K, \\
        \implies \dfrac{1}{2\sqrt{\tau}}\pa{1 + \pd{\lambda} - (-1)^l \frac{\pd{\lambda}}{\lambda}\sqrt{\tau}} &= \pa{\alpha + \beta\sqrt{\tau}}^K,
    \end{align*}
    which implies
    \begin{align}
        \label{eq:3.8}
        1 + \pd{\lambda} &= \pa{2\sqrt{\tau}}\pa{\alpha + \beta\sqrt{\tau}}^K + (-1)^l \frac{\pd{\lambda}}{\lambda}\sqrt{\tau}.
    \end{align}
    From \(K \in \Z\) and \(\pa{\alpha + \beta\sqrt{\tau}}^{-1} = \alpha - \beta\sqrt{\tau}\), we have \(\pa{\alpha + \beta\sqrt{\tau}}^K = A + B\sqrt{\tau}\) for some \(A, B \in \Z\). Substituting into Equation \eqref{eq:3.8}, we get
    \begin{align*}
        1 + \pd{\lambda} &= \pa{2\sqrt{\tau}}\pa{A + B\sqrt{\tau}} + (-1)^l \frac{\pd{\lambda}}{\lambda}\sqrt{\tau} = 2B \tau + \pa{2A + (-1)^l \frac{\pd{\lambda}}{\lambda}}\sqrt{\tau}.
    \end{align*}
    Notice that \(1 + \pd{\lambda} \in \Z\), so we must have \(2A + (-1)^l \dfrac{\pd{\lambda}}{\lambda} = 0\). But this implies \(A = -\dfrac{(-1)^l \frac{\pd{\lambda}}{\lambda}}{2} \not\in \Z\), which is a contradiction.
\end{proof}

\vspace{5pt}
\begin{lemma}
    \label{lem:3.7}
    Let \(l \in \{1,2,3,4\}\) and \(N_l\) as defined in Equation \eqref{eq:3.3}. If \(|\lambda| > \sqrt{\tau}\), then
    \begin{align}
        \label{eq:3.9}
        \pd{\dfrac{1}{2\sqrt{\tau}}\pa{1 + \pd{\lambda} - (-1)^l \frac{\pd{\lambda}}{\lambda}\sqrt{\tau}}\pa{\alpha - \beta\sqrt{\tau}}^{N_l}} < 1.
    \end{align}
\end{lemma}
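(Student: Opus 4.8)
The plan is to reduce the claimed inequality to a lower bound on the exponent $N_l$, and then read that bound directly off the definition of $N_l$ in Equation \eqref{eq:3.3} together with the non-integrality statement of Lemma \ref{lem:3.6}. Throughout, abbreviate
\[
c_l := 1 + \pd{\lambda} - (-1)^l \tfrac{\pd{\lambda}}{\lambda}\sqrt{\tau},
\]
so that the left-hand side of Equation \eqref{eq:3.9} is $\pd{\frac{c_l}{2\sqrt{\tau}}\pa{\alpha - \beta\sqrt{\tau}}^{N_l}}$.

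First I would settle the sign so as to remove the absolute value. Since $\tfrac{\pd{\lambda}}{\lambda} = \pm 1$, we have $c_l = 1 + \pd{\lambda} \mp \sqrt{\tau}$, and because the hypothesis gives $\pd{\lambda} > \sqrt{\tau}$, in the minus case $c_l > 1 > 0$ and in the plus case $c_l > 0$; thus $c_l > 0$ in either case. By Lemma \ref{lem:3.2} we have $0 < \alpha - \beta\sqrt{\tau} < 1$, hence $\pa{\alpha - \beta\sqrt{\tau}}^{N_l} > 0$ (here $N_l \geq 1$ is a positive integer). Therefore the quantity inside the bars is strictly positive, the bars may be dropped, and it suffices to prove $\frac{c_l}{2\sqrt{\tau}}\pa{\alpha - \beta\sqrt{\tau}}^{N_l} < 1$.

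Next I would take logarithms. Using $\alpha - \beta\sqrt{\tau} = \pa{\alpha + \beta\sqrt{\tau}}^{-1}$, the target is equivalent to $\pa{\alpha + \beta\sqrt{\tau}}^{N_l} > \frac{c_l}{2\sqrt{\tau}}$, and since $\alpha + \beta\sqrt{\tau} > 1$ (so its logarithm is positive), this is in turn equivalent to
\[
N_l > \frac{\log c_l - \log\pa{2\sqrt{\tau}}}{\log\pa{\alpha + \beta\sqrt{\tau}}} =: K_l.
\]
The number $K_l$ is precisely the argument of the ceiling in the $\pd{\lambda} > \sqrt{\tau}$ branch of the definition of $N_l$ in Equation \eqref{eq:3.3}, and it is exactly the quantity that Lemma \ref{lem:3.6} asserts is never an integer.

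Finally I would conclude from $N_l = \max\pc{1, \lceil K_l \rceil}$. If $\lceil K_l \rceil \leq 0$, then $N_l = 1$ while $K_l \leq 0 < 1$, so $N_l > K_l$. If $\lceil K_l \rceil \geq 1$, then $N_l = \lceil K_l \rceil \geq K_l$; here the inequality would fail to be strict only if $K_l$ were an integer, which is exactly what Lemma \ref{lem:3.6} forbids, so $\lceil K_l \rceil > K_l$ and again $N_l > K_l$. In every case $N_l > K_l$, which is the reduced inequality, completing the proof. The only delicate point is the \emph{strictness} of the final inequality; everything else is routine bookkeeping with the monotonicity of $\log$ and the positivity of $c_l$. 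It is Lemma \ref{lem:3.6} that supplies this strictness, so the proof is essentially the observation that the ceiling of a non-integer strictly exceeds it.
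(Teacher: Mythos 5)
Your proposal is correct and follows essentially the same route as the paper: both reduce Equation \eqref{eq:3.9} to the strict inequality $N_l > K_l$ (the paper's Equation \eqref{eq:3.10}) via $\alpha - \beta\sqrt{\tau} = \pa{\alpha+\beta\sqrt{\tau}}^{-1}$ and positivity of $c_l$, and both obtain strictness from Lemma \ref{lem:3.6} through the fact that the ceiling of a non-integer strictly exceeds it. Your treatment is marginally more explicit about the sign of $c_l$ and the $\max\pc{1,\lceil K_l\rceil}$ case split, but the substance is identical.
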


\begin{proof}
    From assumption, \(2\sqrt{\tau} > 0\) and \(1 + \pd{\lambda} - (-1)^l \dfrac{\pd{\lambda}}{\lambda}\sqrt{\tau} > 0\). By referring to definition of \(N_l\) as Equation \eqref{eq:3.3} and Lemma \ref{lem:3.6}, we have
    \begin{align}
        \notag
        N_l &\geq \left\lceil\dfrac{\log \pa{1 + \pd{\lambda} - (-1)^l \frac{\pd{\lambda}}{\lambda}\sqrt{\tau}} - \log\pa{2\sqrt{\tau}}}{\log\pa{\alpha + \beta\sqrt{\tau}}}\right\rceil, \\
        \label{eq:3.10}
        &> \dfrac{\log \pa{1 + \pd{\lambda} - (-1)^l \frac{\pd{\lambda}}{\lambda}\sqrt{\tau}} - \log\pa{2\sqrt{\tau}}}{\log\pa{\alpha + \beta\sqrt{\tau}}}.
    \end{align}
    By applying \(\alpha + \beta\sqrt{\tau} = \pa{\alpha - \beta\sqrt{\tau}}^{-1}\) and note that \(\alpha + \beta\sqrt{\tau} > 1\), Equation \eqref{eq:3.10} becomes
    \begin{align}
        \notag
        \log\pa{\alpha + \beta\sqrt{\tau}}^{N_l} &> \log \pa{\dfrac{1}{2\sqrt{\tau}}\pa{1 + \pd{\lambda} - (-1)^l \frac{\pd{\lambda}}{\lambda}\sqrt{\tau}}}, \\
        \notag
        \implies \pa{\alpha + \beta\sqrt{\tau}}^{N_l} &> \dfrac{1}{2\sqrt{\tau}}\pa{1 + \pd{\lambda} - (-1)^l \frac{\pd{\lambda}}{\lambda}\sqrt{\tau}}, \\
        \label{eq:3.11}
        \implies \pa{\alpha - \beta\sqrt{\tau}}^{-N_l} &> \dfrac{1}{2\sqrt{\tau}}\pa{1 + \pd{\lambda} - (-1)^l \frac{\pd{\lambda}}{\lambda}\sqrt{\tau}}.
    \end{align}
    Lemma \ref{lem:3.2} implies \(\pa{\alpha - \beta\sqrt{\tau}}^{N_l} > 0\). Multiplying both sides of Equation \eqref{eq:3.11} by \(\pa{\alpha - \beta\sqrt{\tau}}^{N_l}\) and taking absolute value to both sides, we get Equation \eqref{eq:3.9}.
\end{proof}

\vspace{5pt}
\begin{lemma}
    \label{lem:3.8}
    Let \(l \in \{1,2,3,4\}\) and \(N_l\) as defined in Equation \eqref{eq:3.3}. For all real numbers \(m \geq N_l\),
    \begin{numcases}{}
        \label{eq:3.12}
        \pd{\dfrac{1}{2\sqrt{\tau}}\pa{1 - \lambda  + (-1)^l\sqrt{\tau}}\pa{\alpha - \beta\sqrt{\tau}}^m} < 1 & if \ $\lambda < (-1)^{l-1}\sqrt{\tau}$, \\
        \label{eq:3.13}
        \pd{\dfrac{1}{2\sqrt{\tau}}\pa{1 + \lambda - (-1)^l\sqrt{\tau}}\pa{\alpha - \beta\sqrt{\tau}}^m} < 1 & if \ $\lambda > (-1)^{l-1}\sqrt{\tau}$.
    \end{numcases}
\end{lemma}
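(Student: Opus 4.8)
The plan is to reduce each of the two displayed inequalities to an auxiliary lemma already established: Lemma \ref{lem:3.5} governs the regime $|\lambda| < \sqrt{\tau}$, and Lemma \ref{lem:3.7} governs the regime $|\lambda| > \sqrt{\tau}$. The reduction proceeds by first matching the constant coefficient multiplying $(\alpha - \beta\sqrt{\tau})^m$ in the statement with the coefficient appearing in the relevant lemma, and then, in the second regime, upgrading the single-point bound at $m = N_l$ to a bound for all $m \geq N_l$ via monotonicity. As a preliminary, since $\tau$ is a non-square positive integer, $\sqrt{\tau}$ is irrational and $\lambda \in \Z$, so $\lambda \neq (-1)^{l-1}\sqrt{\tau}$; hence exactly one of the two hypotheses in the statement holds, and the split $|\lambda| < \sqrt{\tau}$ versus $|\lambda| > \sqrt{\tau}$ is likewise exhaustive. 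I would organize the proof around this latter split, which is precisely the dichotomy defining $N_l$ in Equation \eqref{eq:3.3}.

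In the regime $|\lambda| < \sqrt{\tau}$ we have $N_l = 1$, so it suffices to prove each inequality for every real $m \geq 1$. Here the work is purely sign-bookkeeping. Given $-\sqrt{\tau} < \lambda < \sqrt{\tau}$, when $l$ is odd one has $(-1)^{l-1}\sqrt{\tau} = \sqrt{\tau}$, so $\lambda < (-1)^{l-1}\sqrt{\tau}$ and the top inequality applies with coefficient $1 - \lambda + (-1)^l\sqrt{\tau} = 1 - \lambda - \sqrt{\tau}$; when $l$ is even one has $(-1)^{l-1}\sqrt{\tau} = -\sqrt{\tau}$, so $\lambda > (-1)^{l-1}\sqrt{\tau}$ and the bottom inequality applies with coefficient $1 + \lambda - (-1)^l\sqrt{\tau} = 1 + \lambda - \sqrt{\tau}$. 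Both are of the form $1 \pm \lambda - \sqrt{\tau}$, so each inequality is an instance of Lemma \ref{lem:3.5}, which already holds for all $m \in [1, +\infty)$.

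In the regime $|\lambda| > \sqrt{\tau}$ I would first identify the two hypotheses with the sign of $\lambda$: if $\lambda > \sqrt{\tau}$ then $\lambda > (-1)^{l-1}\sqrt{\tau}$ for every $l$, so the bottom inequality applies and $\frac{|\lambda|}{\lambda} = 1$ gives $1 + \lambda - (-1)^l\sqrt{\tau} = 1 + |\lambda| - (-1)^l\frac{|\lambda|}{\lambda}\sqrt{\tau}$; if $\lambda < -\sqrt{\tau}$ then $\lambda < (-1)^{l-1}\sqrt{\tau}$ for every $l$, so the top inequality applies and $\frac{|\lambda|}{\lambda} = -1$ gives $1 - \lambda + (-1)^l\sqrt{\tau} = 1 + |\lambda| - (-1)^l\frac{|\lambda|}{\lambda}\sqrt{\tau}$. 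In both cases the coefficient is exactly the one in Lemma \ref{lem:3.7}, which therefore yields the required bound at $m = N_l$. To extend to all $m \geq N_l$, recall from the proof of Lemma \ref{lem:3.7} that this coefficient, call it $c$, is strictly positive; hence the left-hand side equals $\frac{c}{2\sqrt{\tau}}(\alpha - \beta\sqrt{\tau})^m$, and since $0 < \alpha - \beta\sqrt{\tau} < 1$ by Lemma \ref{lem:3.2} the factor $(\alpha - \beta\sqrt{\tau})^m$ is strictly decreasing in $m$. Thus for every real $m \geq N_l$ the expression is bounded above by its value at $m = N_l$, which is strictly less than $1$.

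The main obstacle is not analytic but combinatorial: confirming, case by case across the parity of $l$ and the sign of $\lambda$, that the coefficients in the top and bottom inequalities collapse onto the $1 \pm \lambda - \sqrt{\tau}$ form of Lemma \ref{lem:3.5} and the $1 + |\lambda| - (-1)^l\frac{|\lambda|}{\lambda}\sqrt{\tau}$ form of Lemma \ref{lem:3.7}. Once this matching is in place, all of the genuine content is supplied by those two lemmas together with the monotonicity of $(\alpha - \beta\sqrt{\tau})^m$ from Lemma \ref{lem:3.2}.
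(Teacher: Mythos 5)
Your proposal is correct and follows essentially the same route as the paper: the case $|\lambda| < \sqrt{\tau}$ (where $N_l = 1$) is reduced to Lemma \ref{lem:3.5} after matching coefficients, and the case $|\lambda| > \sqrt{\tau}$ invokes Lemma \ref{lem:3.7} at $m = N_l$ and then propagates the bound to all $m \geq N_l$ using the monotonicity of $\pa{\alpha - \beta\sqrt{\tau}}^m$ from Lemma \ref{lem:3.2}. Your sign-bookkeeping identifying which of the two displayed inequalities is active in each regime, and the identification of the coefficient with $1 + \pd{\lambda} - (-1)^l\frac{\pd{\lambda}}{\lambda}\sqrt{\tau}$, is exactly the matching the paper performs (somewhat more tersely).
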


\begin{proof}
    Let \(|\lambda| < \sqrt{\tau}\). By Equation \eqref{eq:3.3}, \(N_l = 1\), hence we require to show:
    \vspace{5pt}
    \begin{enumerate}[(i)]
        \item If \(l \in \{1,3\}\), then Equation \eqref{eq:3.12} is true for all \(m \in [1,+\infty)\).
        \item If \(l \in \{2,4\}\), then Equation \eqref{eq:3.13} is true for all \(m \in [1,+\infty)\).
    \end{enumerate}
    \vspace{5pt}
    Both (i) and (ii) above are just the statement in Lemma \ref{lem:3.5}, so the result follows. \\
    
    Let \(\lambda < -\sqrt{\tau}\). So \(|\lambda| = - \lambda\) and therefore by Lemma \ref{lem:3.7}, \(N_l\) satisfies Equation \eqref{eq:3.12}. By Lemma \ref{lem:3.2}, \(\pa{\alpha - \beta\sqrt{\tau}}^m\) is strictly decreasing for \(m \geq N_l\). Since Equation \eqref{eq:3.12} is true for \(m = N_l\), then it must be true for all \(m \geq N_l\). Now let \(\lambda > \sqrt{\tau}\). So \(|\lambda| = \lambda\) and therefore by Lemma \ref{lem:3.7}, \(N_l\) satisfies Equation \eqref{eq:3.13}. By Lemma \ref{lem:3.2}, \(\pa{\alpha - \beta\sqrt{\tau}}^m\) is strictly decreasing for \(m \geq N_l\). Since Equation \eqref{eq:3.13} is true for \(m = N_l\), then it must be true for all \(m \geq N_l\).
\end{proof}

\vspace{5pt}
\section{Strictly Increasing Functions \(\pd{s^{(l)}(m)} + \pd{t^{(l)}(m)}\) where \(l \in \{1,2,3,4\}\)}
We begin this section by reconsidering Equation \eqref{eq:1.18} and Equation \eqref{eq:1.19} as functions with domain \([1,+\infty)\). \\

Let \(l \in \{1,2,3,4\}\), and define functions \(s^{(l)}: [1, +\infty) \to \R\) and \(t^{(l)}: [1, +\infty) \to \R\) with
\begin{align}
    \label{eq:4.1}
    s^{(l)}(m) &= (-1)^{\pe{\frac{l}{2}}}u(m) - (-1)^{\pe{\frac{l-1}{2}}}\lambda v(m) + \dfrac{E}{D}\lambda- \dfrac{d}{2}, \\
    \label{eq:4.2}
    t^{(l)}(m) &= (-1)^{\pe{\frac{l-1}{2}}}v(m) - \dfrac{E}{D},
\end{align}
where \(u(m)\) and \(v(m)\) are functions as defined in Equation \eqref{eq:3.4} and Equation \eqref{eq:3.5} respectively. Note that the restriction of both functions \(s^{(l)}(m)\) and \(t^{(l)}(m)\) to the positive integers are values \(s_m^{(l)}\) and \(t_m^{(l)}\) as defined in Equation \eqref{eq:1.18} and Equation \eqref{eq:1.19} respectively.

\vspace{5pt}
\begin{lemma}
    \label{lem:4.1}
    Let \(l \in \{1,2,3,4\}\), and consider functions \(s^{(l)}(m)\) and \(t^{(l)}(m)\) as in Equation \eqref{eq:4.1} and Equation \eqref{eq:4.2} respectively. The following are all true:
    \begin{enumerate}[label=\normalfont(\alph*)]
        \item If \(\lambda < \sqrt{\tau}\), then function \(s^{(1)}(m)\) is strictly increasing and function \(s^{(3)}(m)\) is strictly decreasing.
        \item If \(\lambda > \sqrt{\tau}\), then function \(s^{(1)}(m)\) is strictly decreasing and function \(s^{(3)}(m)\) is strictly increasing.
        \item If \(\lambda < -\sqrt{\tau}\), then function \(s^{(2)}(m)\) is strictly increasing and function \(s^{(4)}(m)\) is strictly decreasing.
        \item If \(\lambda > -\sqrt{\tau}\), then function \(s^{(2)}(m)\) is strictly decreasing and function \(s^{(4)}(m)\) is strictly increasing.
        \item Function \(t^{(l)}(m)\) is strictly increasing for \(l \in \{1,2\}\) and is strictly decreasing for \(l \in \{3,4\}\).
    \end{enumerate}
\end{lemma}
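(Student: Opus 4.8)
The plan is to reduce every monotonicity assertion to the sign of a single auxiliary expression and then to pin down that sign using the Pell relation of Lemma~\ref{lem:3.1}(a) together with the integrality of $\lambda$. Part~(e) is immediate: since $t^{(l)}(m)=(-1)^{\pe{\frac{l-1}{2}}}v(m)-\frac{E}{D}$, its derivative is $(-1)^{\pe{\frac{l-1}{2}}}v'(m)$, and $v'(m)>0$ by Lemma~\ref{lem:3.1}(b); the sign of $(-1)^{\pe{\frac{l-1}{2}}}$ is $+$ for $l\in\{1,2\}$ and $-$ for $l\in\{3,4\}$, giving the claim.

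For (a)--(d) I would differentiate $s^{(l)}(m)$ and substitute the formulas $u'(m)=\sqrt{\tau}\log(\alpha+\beta\sqrt{\tau})\,v(m)$ and $v'(m)=\frac{1}{\sqrt{\tau}}\log(\alpha+\beta\sqrt{\tau})\,u(m)$ from Equations~\eqref{eq:3.6} and~\eqref{eq:3.7}. Writing $L=\log(\alpha+\beta\sqrt{\tau})>0$, each derivative collapses to a positive constant times one of two quantities: $s^{(1)}{}'$ and $-s^{(3)}{}'$ equal $\frac{L}{\sqrt{\tau}}\bigl(\tau v(m)-\lambda u(m)\bigr)$, while $s^{(4)}{}'$ and $-s^{(2)}{}'$ equal $\frac{L}{\sqrt{\tau}}\bigl(\tau v(m)+\lambda u(m)\bigr)$. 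Thus everything reduces to the sign of $G_\mp(m):=\tau v(m)\mp\lambda u(m)$, which I would package as a single claim about $\tau v(m)-\mu u(m)$ for a real constant $\mu$ (taking $\mu=\lambda$ for $G_-$ and $\mu=-\lambda$ for $G_+$).

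To determine that sign, note $\tau v(m)>0$, so the case $\mu\le 0$ is trivial; for $\mu>0$ I would compare squares, which is legitimate since both $\tau v(m)$ and $\mu u(m)$ are positive. Using $u(m)^2-\tau v(m)^2=1$ (Lemma~\ref{lem:3.1}(a)), the inequality $\tau v(m)>\mu u(m)$ is equivalent to $\tau(\tau-\mu^2)v(m)^2>\mu^2$. When $\mu>\sqrt{\tau}$ the left side is negative, so $G<0$; when $0<\mu<\sqrt{\tau}$ the crux enters. Here I would invoke that $\lambda\in\Z$ and $\tau$ is a non-square, so $\mu=|\lambda|$ is an integer with $\mu^2\le\tau-1$, whence $\tau-\mu^2\ge 1$ and $v(m)^2\ge v(1)^2=\beta^2\ge 1$; therefore $\tau(\tau-\mu^2)v(m)^2\ge\tau>\mu^2$ and $G>0$ strictly for all $m\in[1,+\infty)$. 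Reading this off for $\mu=\lambda$ gives $G_-(m)>0$ when $\lambda<\sqrt{\tau}$ and $G_-(m)<0$ when $\lambda>\sqrt{\tau}$, yielding (a) and (b); reading it for $\mu=-\lambda$ gives $G_+(m)<0$ when $\lambda<-\sqrt{\tau}$ and $G_+(m)>0$ when $\lambda>-\sqrt{\tau}$, yielding (c) and (d). The borderline $\lambda=\sqrt{\tau}$ never occurs, since $\sqrt{\tau}$ is irrational and $\lambda$ is an integer.

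The step I expect to be the genuine obstacle is precisely the range $0<|\lambda|<\sqrt{\tau}$. The tempting shortcut---using $w(m)=u(m)/v(m)>\sqrt{\tau}$ from Lemma~\ref{lem:3.1}(d),(e)---does not close the argument, because the required pointwise bound $|\lambda|<\tau/w(m)$ is tightest at $m=1$, where $\tau/w(1)=\tau\beta/\alpha<\sqrt{\tau}$; thus the hypothesis $|\lambda|<\sqrt{\tau}$ alone is insufficient for a generic real $\lambda$. It is the integrality of $\lambda$ (forcing $\tau-\lambda^2\ge 1$) that bridges this gap, and the squaring argument above is the clean way to exploit it without ever invoking the fundamental solution $(\alpha,\beta)$ explicitly.
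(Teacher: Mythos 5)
Your proposal is correct and follows essentially the same route as the paper's proof: differentiate, substitute $u'(m)=\sqrt{\tau}\log(\alpha+\beta\sqrt{\tau})v(m)$ and $v'(m)=\frac{1}{\sqrt{\tau}}\log(\alpha+\beta\sqrt{\tau})u(m)$, reduce to the sign of $\tau v(m)\mp\lambda u(m)$, dispatch the easy sign cases, and in the genuinely delicate range $0<|\lambda|<\sqrt{\tau}$ use the integrality of $\lambda$ (so $\lambda^2\le\tau-1$) together with $u(m)^2-\tau v(m)^2=1$ to force $\tau^2v(m)^2>\lambda^2u(m)^2$ --- exactly the paper's argument, merely packaged as one unified claim about $\tau v(m)-\mu u(m)$ instead of the paper's separate subcases $\lambda=0$, $\lambda<0$, $\lambda>0$ for each part. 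Your closing observation that the ratio bound $u(m)/v(m)>\sqrt{\tau}$ alone cannot close the case $0<|\lambda|<\sqrt{\tau}$ correctly identifies the same crux the paper resolves by the squaring argument.
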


\begin{proof}
    \textit{Proof of} (a)\textit{.} If \(\lambda = 0\), then from Equation \eqref{eq:4.1} and Equation \eqref{eq:3.6}, the derivatives of \(s^{(1)}(m)\) and \(s^{(3)}(m)\) will be just
    \[\pa{s^{(1)}(m)}^{'} = u'(m) > 0 \hspace{20pt} \text{and} \hspace{20pt} \pa{s^{(3)}(m)}^{'} = -u'(m) < 0.\]
    Therefore, function \(s^{(1)}(m)\) is strictly increasing and function \(s^{(3)}(m)\) is strictly decreasing. \\
    
    Let \(\lambda < 0\). From Equation \eqref{eq:4.1}, consider the derivative of \(s^{(1)}(m)\) by applying the derivative \(u'(m)\) in Equation \eqref{eq:3.6} and the derivative \(v'(m)\) in Equation \eqref{eq:3.7}:
    \begin{align}
        \label{eq:4.3}
        \pa{s^{(1)}(m)}^{'} &= u'(m) - \lambda v'(m), \\
        \notag
        &= \pa{\sqrt{\tau}}\log(\alpha + \beta\sqrt{\tau})v(m) - \lambda \pa{\dfrac{1}{\sqrt{\tau}}}\log(\alpha + \beta\sqrt{\tau})u(m), \\
        \label{eq:4.4}
        &= \pa{\dfrac{-\lambda}{\sqrt{\tau}}}v(m)\log\pa{\alpha + \beta\sqrt{\tau}}\pa{\dfrac{u(m)}{v(m)} - \dfrac{\tau}{\lambda}}.
    \end{align}
    We have \(\pa{\dfrac{-\lambda}{\sqrt{\tau}}} > 0\) and \(v(m)\log\pa{\alpha + \beta\sqrt{\tau}} > 0\). From Lemma \ref{lem:3.1}(d) and Lemma \ref{lem:3.1}(e), \(\dfrac{u(m)}{v(m)} - \sqrt{\tau} > 0\). Since \(\lambda < \sqrt{\tau}\) and \(\lambda < 0\) by assumption, then \(-\sqrt{\tau} < -\dfrac{\tau}{\lambda}\). This gives us
    \[0 < \dfrac{u(m)}{v(m)} - \sqrt{\tau} < \dfrac{u(m)}{v(m)} - \dfrac{\tau}{\lambda},\]
    hence Equation \eqref{eq:4.4} is positive and so \(\pa{s^{(1)}(m)}^{'} > 0\). Consider the derivative of \(s^{(3)}(m)\). From Equation \eqref{eq:4.1} and Equation \eqref{eq:4.3},
    \[\pa{s^{(3)}(m)}^{'} = -u'(m) + \lambda v'(m) = -\pa{u'(m) - \lambda v'(m)} = -\pa{s^{(1)}(m)}^{'} < 0.\]
    Therefore, function \(s^{(1)}(m)\) is strictly increasing and function \(s^{(3)}(m)\) is strictly decreasing. \\

    Let \(\lambda > 0\). Since \(\lambda < \sqrt{\tau}\), then \(\lambda^2 < \tau\) which implies \(\lambda^2 \leq \tau - 1\) since \(\lambda \in \Z\). From Lemma \ref{lem:3.1}(d) and Lemma \ref{lem:3.1}(e), \(\dfrac{u(m)}{v(m)} - \sqrt{\tau} > 0\) and so \(u(m) - v(m)\sqrt{\tau} > 0\). We then have
    \begin{align*}
        u(m) &> v(m)\sqrt{\tau}, \\
        u(m) &> \sqrt{\tau} \ & \ &\text{since} \ v(m) \geq v(1) = \beta \geq 1 \ \text{for all} \ m \in [1, +\infty), \\
        u(m)^2 &> \tau, \\
        u(m)^2 &> \tau\pc{u(m)^2 - \tau v(m)^2} \ & \ &\text{since} \ u(m)^2 - \tau v(m)^2 = 1 \ \text{from Lemma \ref{lem:3.1}(a)}, \\
        \tau^2v(m)^2 &> u(m)^2 \pa{\tau - 1}, \\
        \tau^2v(m)^2 &> u(m)^2 \pa{\lambda^2} \ & \ &\text{since} \ \lambda^2 \leq \tau - 1 \ \text{from assumption,} \\
        \tau v(m) &> u(m) \lambda, \\
        \dfrac{\tau v(m)}{u(m)} &> \lambda,
    \end{align*}
    for all \(m \in [1, +\infty)\). By applying the derivative \(u'(m)\) in Equation \eqref{eq:3.6} and the derivative \(v'(m)\) in Equation \eqref{eq:3.7}:
    \[\lambda < \dfrac{\tau v(m)}{u(m)} = \dfrac{\sqrt{\tau} \log\pa{\alpha + \beta\sqrt{\tau}}v(m)}{\pa{\dfrac{1}{\sqrt{\tau}}}\log\pa{\alpha + \beta \sqrt{\tau}}u(m)} = \dfrac{u'(m)}{v'(m)},\]
    which implies \(u'(m) - \lambda v'(m) > 0\), so by Equation \eqref{eq:4.3}, we get \(\pa{s^{(1)}(m)}^{'} = u'(m) - \lambda v'(m) > 0\). Also, \(\pa{s^{(3)}(m)}^{'} = -\pa{s^{(1)}(m)}^{'} < 0\). Therefore, function \(s^{(1)}(m)\) is strictly increasing and function \(s^{(3)}(m)\) is strictly decreasing. \\

    \textit{Proof of} (b)\textit{.} From Equation \eqref{eq:4.4},
    \begin{align}
        \label{eq:4.5}
        \pa{s^{(1)}(m)}^{'} = -\pa{\dfrac{\lambda}{\sqrt{\tau}}}v(m)\log\pa{\alpha + \beta\sqrt{\tau}}\pa{\dfrac{u(m)}{v(m)} - \dfrac{\tau}{\lambda}}.
    \end{align}
    We have \(-\pa{\dfrac{\lambda}{\sqrt{\tau}}} < 0\) and \(v(m)\log\pa{\alpha + \beta\sqrt{\tau}} > 0\). From Lemma \ref{lem:3.1}(d) and Lemma \ref{lem:3.1}(e), \(\dfrac{u(m)}{v(m)} - \sqrt{\tau} > 0\). Since \(\lambda > \sqrt{\tau}\) by assumption, then \(-\sqrt{\tau} < -\dfrac{\tau}{\lambda}\). This gives us
    \[0 < \dfrac{u(m)}{v(m)} - \sqrt{\tau} < \dfrac{u(m)}{v(m)} - \dfrac{\tau}{\lambda},\]
    hence Equation \eqref{eq:4.5} is negative and so \(\pa{s^{(1)}(m)}^{'} < 0\). Also,
    \[\pa{s^{(3)}(m)}^{'} = -\pa{s^{(1)}(m)}^{'} > 0.\]
    Therefore, function \(s^{(1)}(m)\) is strictly decreasing and function \(s^{(3)}(m)\) is strictly increasing. \\

    \textit{Proof of} (c)\textit{.} From Equation \eqref{eq:4.1}, consider the derivative of \(s^{(2)}(m)\) by applying the derivative \(u'(m)\) in Equation \eqref{eq:3.6} and the derivative \(v'(m)\) in Equation \eqref{eq:3.7}:
    \begin{align}
        \label{eq:4.6}
        \pa{s^{(2)}(m)}^{'} &= -u'(m) - \lambda v'(m), \\
        \notag
        &= -\pa{\sqrt{\tau}}\log(\alpha + \beta\sqrt{\tau})v(m) - \lambda \pa{\dfrac{1}{\sqrt{\tau}}}\log(\alpha + \beta\sqrt{\tau})u(m), \\
        \label{eq:4.7}
        &= \pa{\dfrac{-\lambda}{\sqrt{\tau}}}v(m)\log\pa{\alpha + \beta\sqrt{\tau}}\pa{\dfrac{u(m)}{v(m)} + \dfrac{\tau}{\lambda}}.
    \end{align}
    We have \(\pa{\dfrac{-\lambda}{\sqrt{\tau}}} > 0\) and \(v(m)\log\pa{\alpha + \beta\sqrt{\tau}} > 0\). From Lemma \ref{lem:3.1}(d) and Lemma \ref{lem:3.1}(e), \(\dfrac{u(m)}{v(m)} - \sqrt{\tau} > 0\). Since \(\lambda < -\sqrt{\tau}\) by assumption, then \(-\sqrt{\tau} < \dfrac{\tau}{\lambda}\). This gives us
    \[0 < \dfrac{u(m)}{v(m)} - \sqrt{\tau} < \dfrac{u(m)}{v(m)} + \dfrac{\tau}{\lambda},\]
    hence Equation \eqref{eq:4.7} is positive and so \(\pa{s^{(2)}(m)}^{'} > 0\). Consider the derivative of \(s^{(4)}(m)\). From Equation \eqref{eq:4.1} and Equation \eqref{eq:4.6},
    \[\pa{s^{(4)}(m)}^{'} = u'(m) + \lambda v'(m) = -\pa{-u'(m) - \lambda v'(m)} = -\pa{s^{(2)}(m)}^{'} < 0.\]
    Therefore, function \(s^{(2)}(m)\) is strictly increasing and function \(s^{(4)}(m)\) is strictly decreasing. \\

    \textit{Proof of} (d)\textit{.} If \(\lambda = 0\), then from Equation \eqref{eq:4.1} and Equation \eqref{eq:3.6}, the derivatives of \(s^{(2)}(m)\) and \(s^{(4)}(m)\) will be just
    \[\pa{s^{(2)}(m)}^{'} = -u'(m) < 0 \hspace{20pt} \text{and} \hspace{20pt} \pa{s^{(4)}(m)}^{'} = u'(m) > 0.\]
    Therefore, function \(s^{(2)}(m)\) is strictly decreasing and function \(s^{(4)}(m)\) is strictly increasing. \\
    
    Let \(\lambda > 0\). From Equation \eqref{eq:4.7},
    \begin{align}
        \label{eq:4.8}
        \pa{s^{(2)}(m)}^{'} = -\pa{\dfrac{\lambda}{\sqrt{\tau}}}v(m)\log\pa{\alpha + \beta\sqrt{\tau}}\pa{\dfrac{u(m)}{v(m)} + \dfrac{\tau}{\lambda}}.
    \end{align}
    We have \(-\pa{\dfrac{\lambda}{\sqrt{\tau}}} < 0\) and \(v(m)\log\pa{\alpha + \beta\sqrt{\tau}} > 0\). From Lemma \ref{lem:3.1}(d) and Lemma \ref{lem:3.1}(e), \(\dfrac{u(m)}{v(m)} - \sqrt{\tau} > 0\). Since \(\lambda > -\sqrt{\tau}\) and \(\lambda > 0\) by assumption, then \(-\sqrt{\tau} < \dfrac{\tau}{\lambda}\). This gives us
    \[0 < \dfrac{u(m)}{v(m)} - \sqrt{\tau} < \dfrac{u(m)}{v(m)} + \dfrac{\tau}{\lambda},\]
    hence Equation \eqref{eq:4.8} is negative and so \(\pa{s^{(2)}(m)}^{'} < 0\). Also,
    \[\pa{s^{(4)}(m)}^{'} = -\pa{s^{(2)}(m)}^{'} > 0.\]
    Therefore, function \(s^{(2)}(m)\) is strictly decreasing and function \(s^{(4)}(m)\) is strictly increasing. \\

    Let \(\lambda < 0\). From \(\lambda > -\sqrt{\tau}\), then \(\lambda^2 < \tau\) which implies \(\lambda^2 \leq \tau - 1\) since \(\lambda \in \Z\). From Lemma \ref{lem:3.1}(d) and Lemma \ref{lem:3.1}(e), \(\dfrac{u(m)}{v(m)} - \sqrt{\tau} > 0\) and so \(u(m) - v(m)\sqrt{\tau} > 0\). We then have
    \begin{align*}
        u(m) &> v(m)\sqrt{\tau}, \\
        u(m) &> \sqrt{\tau} \ & \ &\text{since} \ v(m) \geq v(1) = \beta \geq 1 \ \text{for all} \ m \in [1, +\infty), \\
        u(m)^2 &> \tau, \\
        u(m)^2 &> \tau\pc{u(m)^2 - \tau v(m)^2} \ & \ &\text{since} \ u(m)^2 - \tau v(m)^2 = 1 \ \text{from Lemma \ref{lem:3.1}(a)}, \\
        \tau^2v(m)^2 &> u(m)^2 \pa{\tau - 1}, \\
        \tau^2v(m)^2 &> u(m)^2 \pa{\lambda^2} \ & \ &\text{since} \ \lambda^2 \leq \tau - 1 \ \text{from assumption,} \\
        \tau v(m) &> u(m) \pa{-\lambda}, \\
        \dfrac{\tau v(m)}{u(m)} &> -\lambda,
    \end{align*}
    for all \(m \in [1, +\infty)\). By applying the derivative \(u'(m)\) in Equation \eqref{eq:3.6} and the derivative \(v'(m)\) in Equation \eqref{eq:3.7}:
    \[-\lambda < \dfrac{\tau v(m)}{u(m)} = \dfrac{\sqrt{\tau} \log\pa{\alpha + \beta\sqrt{\tau}}v(m)}{\pa{\dfrac{1}{\sqrt{\tau}}}\log\pa{\alpha + \beta \sqrt{\tau}}u(m)} = \dfrac{u'(m)}{v'(m)},\]
    which implies \(-u'(m) - \lambda v'(m) < 0\), so by Equation \eqref{eq:4.6}, we get \(\pa{s^{(2)}(m)}^{'} = -u'(m) - \lambda v'(m) < 0\). Also, \(\pa{s^{(4)}(m)}^{'} = -\pa{s^{(2)}(m)}^{'} > 0\). Therefore, function \(s^{(2)}(m)\) is strictly decreasing and function \(s^{(4)}(m)\) is strictly increasing. \\

    \textit{Proof of} (e)\textit{.} From Equation \eqref{eq:4.2} and Equation \eqref{eq:3.7}, the derivatives of \(t^{(l)}(m)\) are \(\pa{t^{(l)}(m)}^{'} = v'(m) > 0\) for \(l \in \{1,2\}\) and \(\pa{t^{(l)}(m)}^{'} = -v'(m) < 0\) for \(l \in \{3,4\}\). Therefore, function \(t^{(l)}(m)\) is strictly increasing for \(l \in \{1,2\}\) and function \(t^{(l)}(m)\) is strictly decreasing for \(l \in \{3,4\}\).
\end{proof}

\vspace{5pt}
\begin{lemma}
    \label{lem:4.2}
    Let \(l \in \{1,2,3,4\}\) and \(N_0 \in \N\) as defined in Equation \eqref{eq:3.2}. The following are all true:
    \begin{enumerate}[label=\normalfont(\alph*)]
        \item If function \(s^{(l)}(m)\) is strictly increasing, then \(s^{(l)}(m) > 0\) for all real numbers \(m \geq N_0\). Otherwise if function \(s^{(l)}(m)\) is strictly decreasing, then \(s^{(l)}(m) < 0\) for all real numbers  \(m \geq N_0\).
        \item If function \(t^{(l)}(m)\) is strictly increasing, then \(t^{(l)}(m) > 0\) for all real numbers \(m \geq N_0\). Otherwise if function \(t^{(l)}(m)\) is strictly decreasing, then \(t^{(l)}(m) < 0\) for all real numbers \(m \geq N_0\).
    \end{enumerate}
\end{lemma}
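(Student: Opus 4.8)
The plan is to reduce both parts to the single defining inequality of \(N_0\) evaluated at the integer point \(m=N_0\), and then to propagate the sign to all real \(m\geq N_0\) using the monotonicity already recorded in Lemma \ref{lem:4.1} (for \(s^{(l)}\)) and Lemma \ref{lem:3.1}(b) (for \(v\), hence \(t^{(l)}\)). Throughout I would write \(C := \frac{E}{D}\lambda-\frac{d}{2}\) for the constant term appearing in \(s^{(l)}(m)\) in Equation \eqref{eq:4.1}. Since \(u(N_0)=u_{N_0}\) and \(v(N_0)=v_{N_0}\) by Equations \eqref{eq:1.9}, \eqref{eq:1.10}, the definition of \(N_0\) in Equation \eqref{eq:3.2} supplies at once the key inequalities
\[u(N_0) > |\lambda|\,v(N_0) + |C| \ \ (\text{if } |\lambda|<\sqrt{\tau}), \qquad u(N_0) < |\lambda|\,v(N_0) - |C| \ \ (\text{if } |\lambda|>\sqrt{\tau}),\]
together with \(v(N_0) > \frac{|E|}{D}\) in both regimes.

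For part (b), I would first note that \(v\) is strictly increasing (Lemma \ref{lem:3.1}(b)), so \(v(m)\geq v(N_0) > \frac{|E|}{D}\) for every real \(m\geq N_0\). For \(l\in\{1,2\}\) one has \(t^{(l)}(m)=v(m)-\frac{E}{D}\), and since \(\frac{|E|}{D}\geq\frac{E}{D}\) this gives \(t^{(l)}(m) > \frac{|E|}{D}-\frac{E}{D}\geq 0\); for \(l\in\{3,4\}\) one has \(t^{(l)}(m)=-v(m)-\frac{E}{D}\), and since \(\frac{|E|}{D}\geq-\frac{E}{D}\) this gives \(t^{(l)}(m) < -\frac{|E|}{D}-\frac{E}{D}\leq 0\). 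As \(t^{(l)}\) is increasing exactly for \(l\in\{1,2\}\) and decreasing exactly for \(l\in\{3,4\}\) by Lemma \ref{lem:4.1}(e), this is precisely the claim.

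For part (a) the idea is to evaluate \(s^{(l)}\) at \(N_0\), check that its sign agrees with the increasing/decreasing classification of Lemma \ref{lem:4.1}, and then extend: if \(s^{(l)}\) is strictly increasing and \(s^{(l)}(N_0)>0\), then \(s^{(l)}(m)\geq s^{(l)}(N_0)>0\) for all \(m\geq N_0\); dually, if \(s^{(l)}\) is strictly decreasing and \(s^{(l)}(N_0)<0\), then \(s^{(l)}(m)\leq s^{(l)}(N_0)<0\). So everything reduces to the sign of \(s^{(l)}(N_0)\), which I would organise by the two regimes. When \(|\lambda|<\sqrt{\tau}\), the inequality \(u(N_0)>|\lambda|v(N_0)+|C|\) together with the elementary bounds \(|\lambda|\geq\pm\lambda\) and \(|C|\geq\pm C\) gives, for example, \(s^{(1)}(N_0)=u(N_0)-\lambda v(N_0)+C > (|\lambda|-\lambda)v(N_0)+(|C|+C)\geq 0\), matching that \(s^{(1)}\) is increasing (Lemma \ref{lem:4.1}(a)); the analogous one-line manipulations settle \(s^{(2)},s^{(3)},s^{(4)}\). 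When \(|\lambda|>\sqrt{\tau}\), I would split into \(\lambda>\sqrt{\tau}\) and \(\lambda<-\sqrt{\tau}\); here \(u(N_0)<|\lambda|v(N_0)-|C|\) forces \(|\lambda|v(N_0) > u(N_0)+|C| > |C|\), and combining \(u(N_0)>0\) with \(\pm\lambda v(N_0)\) of the appropriate sign yields the required sign of each \(s^{(l)}(N_0)\) in agreement with Lemma \ref{lem:4.1}(b),(c),(d).

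The sign computations themselves are routine; the place that needs care — and where I expect the main obstacle to lie — is the bookkeeping that in every case the sign of \(s^{(l)}(N_0)\) extracted from the \(N_0\)-inequality is consistent with whether Lemma \ref{lem:4.1} declares \(s^{(l)}\) increasing or decreasing in that same region. In particular one must verify that the regime split \(|\lambda|\lessgtr\sqrt{\tau}\) used to define \(N_0\) lines up correctly with the finer splits \(\lambda\lessgtr\pm\sqrt{\tau}\) governing the monotonicity of each individual \(s^{(l)}\) (so that, e.g., \(|\lambda|<\sqrt{\tau}\) invokes parts (a) and (d) of Lemma \ref{lem:4.1}, while \(\lambda>\sqrt{\tau}\) invokes parts (b) and (d)). Once that alignment is confirmed case by case, the monotonicity of Lemma \ref{lem:4.1} closes the argument immediately.
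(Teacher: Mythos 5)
Your proposal is correct and follows essentially the same route as the paper's proof: reduce each claim to the sign of \(s^{(l)}(N_0)\) (resp.\ \(t^{(l)}(N_0)\)) via the monotonicity from Lemma \ref{lem:4.1}, then extract that sign from the defining inequalities of \(N_0\) in Equation \eqref{eq:3.2} using the elementary bounds \(\pd{\lambda}\geq\pm\lambda\), \(\pd{C}\geq\pm C\) and \(u_{N_0}>0\). The case bookkeeping you flag as the delicate point is exactly what the paper carries out for \(l=1,2,3,4\), and it does align as you anticipate.
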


\begin{proof}
    \textit{Proof of} (a)\textit{.} Notice that if \(s^{(l)}(m)\) is strictly increasing, it is sufficient to show \(s^{(l)}(N_0) = s^{(l)}_{N_0} > 0\), and similarly if \(s^{(l)}(m)\) is strictly decreasing, it is sufficient to show \(s^{(l)}(N_0) = s^{(l)}_{N_0} < 0\). \\
    
    Let \(l = 1\), and suppose that \(s^{(1)}(m)\) is strictly increasing. Then it must be the case \(\lambda < \sqrt{\tau}\), otherwise if not, then \(\lambda > \sqrt{\tau}\) and by Lemma \ref{lem:4.1}(b), function \(s^{(1)}(m)\) is strictly decreasing which is a contradiction. Hence, we have either \(|\lambda| < \sqrt{\tau}\) or \(\lambda < -\sqrt{\tau}\). If \(|\lambda| < \sqrt{\tau}\), then based on Equation \eqref{eq:3.2}, \(N_0\) satisfies
    \begin{align}
        \label{eq:4.9}
        u_{N_0} > |\lambda|v_{N_0} + \pd{\dfrac{E}{D}\lambda- \dfrac{d}{2}} \geq \lambda v_{N_0} - \pa{\dfrac{E}{D}\lambda- \dfrac{d}{2}},
    \end{align}
    and if \(\lambda < -\sqrt{\tau}\), then based on Equation \eqref{eq:3.2}, \(N_0\) satisfies
    \begin{align}
        \label{eq:4.10}
        -u_{N_0} < u_{N_0} < |\lambda|v_{N_0} - \pd{\dfrac{E}{D}\lambda- \dfrac{d}{2}} \leq -\lambda v_{N_0} + \pa{\dfrac{E}{D}\lambda- \dfrac{d}{2}}.
    \end{align}
    Both Equation \eqref{eq:4.9} and Equation \eqref{eq:4.10} satisfy \(u_{N_0} - \lambda v_{N_0} + \dfrac{E}{D}\lambda- \dfrac{d}{2} > 0\), and by Equation \eqref{eq:1.18}, this is equivalent to \(s_{N_0}^{(1)} > 0\). Now suppose that \(s^{(1)}(m)\) is strictly decreasing. By a similar argument as before, it must be the case \(\lambda > \sqrt{\tau}\), otherwise we reach a contradiction. Then based on Equation \eqref{eq:3.2}, \(N_0\) satisfies
    \begin{align*}
        u_{N_0} < |\lambda|v_{N_0} - \pd{\dfrac{E}{D}\lambda - \dfrac{d}{2}} \leq \lambda v_{N_0} - \pa{\dfrac{E}{D}\lambda- \dfrac{d}{2}},
    \end{align*}
    so it satisfies \(u_{N_0} - \lambda v_{N_0} + \dfrac{E}{D}\lambda - \dfrac{d}{2} < 0\), and by Equation \eqref{eq:1.18}, this is equivalent to \(s_{N_0}^{(1)} < 0\). \\

    Let \(l = 2\), and suppose that \(s^{(2)}(m)\) is strictly increasing. Then it must be the case \(\lambda < -\sqrt{\tau}\), otherwise if not, then \(\lambda > -\sqrt{\tau}\) and by Lemma \ref{lem:4.1}(d), function \(s^{(2)}(m)\) is strictly decreasing which is a contradiction. Then based on Equation \eqref{eq:3.2}, \(N_0\) satisfies
    \begin{align*}
        u_{N_0} < |\lambda|v_{N_0} - \pd{\dfrac{E}{D}\lambda- \dfrac{d}{2}} \leq -\lambda v_{N_0} + \pa{\dfrac{E}{D}\lambda- \dfrac{d}{2}},
    \end{align*}
    so it satisfies \(-u_{N_0} - \lambda v_{N_0} + \dfrac{E}{D}\lambda- \dfrac{d}{2} > 0\), and by Equation \eqref{eq:1.18}, this is equivalent to \(s_{N_0}^{(2)} > 0\). Now suppose that \(s^{(2)}(m)\) is strictly decreasing. By a similar argument as before, it must be the case \(\lambda > -\sqrt{\tau}\), otherwise we reach a contradiction. Hence, we have either \(|\lambda| < \sqrt{\tau}\) or \(\lambda > \sqrt{\tau}\). If \(|\lambda| < \sqrt{\tau}\), then based on Equation \eqref{eq:3.2}, \(N_0\) satisfies
    \begin{align}
        \label{eq:4.11}
        u_{N_0} > |\lambda|v_{N_0} + \pd{\dfrac{E}{D}\lambda- \dfrac{d}{2}} \geq -\lambda v_{N_0} + \pa{\dfrac{E}{D}\lambda- \dfrac{d}{2}},
    \end{align}
    and if \(\lambda > \sqrt{\tau}\), then based on Equation \eqref{eq:3.2}, \(N_0\) satisfies
    \begin{align}
        \label{eq:4.12}
        -u_{N_0} < u_{N_0} < |\lambda|v_{N_0} - \pd{\dfrac{E}{D}\lambda- \dfrac{d}{2}} \leq \lambda v_{N_0} - \pa{\dfrac{E}{D}\lambda- \dfrac{d}{2}}.
    \end{align}
    Both Equation \eqref{eq:4.11} and Equation \eqref{eq:4.12} satisfy \(-u_{N_0} - \lambda v_{N_0} + \dfrac{E}{D}\lambda- \dfrac{d}{2} < 0\), and by Equation \eqref{eq:1.18}, this is equivalent to \(s_{N_0}^{(2)} < 0\). \\

    Let \(l = 3\), and suppose that \(s^{(3)}(m)\) is strictly increasing. Then it must be the case \(\lambda > \sqrt{\tau}\), otherwise if not, then \(\lambda < \sqrt{\tau}\) and by Lemma \ref{lem:4.1}(a), function \(s^{(3)}(m)\) is strictly decreasing which is a contradiction. Then based on Equation \eqref{eq:3.2}, \(N_0\) satisfies
    \begin{align*}
        u_{N_0} < |\lambda|v_{N_0} - \pd{\dfrac{E}{D}\lambda- \dfrac{d}{2}} \leq \lambda v_{N_0} + \pa{\dfrac{E}{D}\lambda- \dfrac{d}{2}},
    \end{align*}
    so it satisfies \(-u_{N_0} + \lambda v_{N_0} + \dfrac{E}{D}\lambda- \dfrac{d}{2} > 0\), and by Equation \eqref{eq:1.18}, this is equivalent to \(s_{N_0}^{(3)} > 0\). Now suppose that \(s^{(3)}(m)\) is strictly decreasing. By a similar argument as before, it must be the case \(\lambda < \sqrt{\tau}\), otherwise we reach a contradiction. Hence, we have either \(|\lambda| < \sqrt{\tau}\) or \(\lambda < -\sqrt{\tau}\). If \(|\lambda| < \sqrt{\tau}\), then based on Equation \eqref{eq:3.2}, \(N_0\) satisfies
    \begin{align}
        \label{eq:4.13}
        u_{N_0} > |\lambda|v_{N_0} + \pd{\dfrac{E}{D}\lambda- \dfrac{d}{2}} \geq \lambda v_{N_0} + \pa{\dfrac{E}{D}\lambda- \dfrac{d}{2}},
    \end{align}
    and if \(\lambda < -\sqrt{\tau}\), then based on Equation \eqref{eq:3.2}, \(N_0\) satisfies
    \begin{align}
        \label{eq:4.14}
        -u_{N_0} < u_{N_0} < |\lambda|v_{N_0} - \pd{\dfrac{E}{D}\lambda- \dfrac{d}{2}} \leq -\lambda v_{N_0} - \pa{\dfrac{E}{D}\lambda- \dfrac{d}{2}}.
    \end{align}
    Both Equation \eqref{eq:4.13} and Equation \eqref{eq:4.14} satisfy \(-u_{N_0} +\lambda v_{N_0} + \dfrac{E}{D}\lambda- \dfrac{d}{2} < 0\), and by Equation \eqref{eq:1.18}, this is equivalent to \(s_{N_0}^{(3)} < 0\). \\

    Let \(l = 4\), and suppose that \(s^{(4)}(m)\) is strictly increasing. Then it must be the case \(\lambda > -\sqrt{\tau}\), otherwise if not, then \(\lambda < -\sqrt{\tau}\) and by Lemma \ref{lem:4.1}(c), function \(s^{(4)}(m)\) is strictly decreasing which is a contradiction. Hence, we have either \(|\lambda| < \sqrt{\tau}\) or \(\lambda > \sqrt{\tau}\). If \(|\lambda| < \sqrt{\tau}\), then based on Equation \eqref{eq:3.2}, \(N_0\) satisfies
    \begin{align}
        \label{eq:4.15}
        u_{N_0} > |\lambda|v_{N_0} + \pd{\dfrac{E}{D}\lambda- \dfrac{d}{2}} \geq -\lambda v_{N_0} - \pa{\dfrac{E}{D}\lambda- \dfrac{d}{2}},
    \end{align}
    and if \(\lambda > \sqrt{\tau}\), then based on Equation \eqref{eq:3.2}, \(N_0\) satisfies
    \begin{align}
        \label{eq:4.16}
        -u_{N_0} < u_{N_0} < |\lambda|v_{N_0} - \pd{\dfrac{E}{D}\lambda- \dfrac{d}{2}} \leq \lambda v_{N_0} + \pa{\dfrac{E}{D}\lambda- \dfrac{d}{2}}.
    \end{align}
    Both Equation \eqref{eq:4.15} and Equation \eqref{eq:4.16} satisfy \(u_{N_0} + \lambda v_{N_0} + \dfrac{E}{D}\lambda- \dfrac{d}{2} > 0\), and by Equation \eqref{eq:1.18}, this is equivalent to \(s_{N_0}^{(4)} > 0\). Now suppose that \(s^{(4)}(m)\) is strictly decreasing. By a similar argument as before, it must be the case \(\lambda < -\sqrt{\tau}\), otherwise we reach a contradiction. Then based on Equation \eqref{eq:3.2}, \(N_0\) satisfies
    \begin{align*}
        u_{N_0} < |\lambda|v_{N_0} - \pd{\dfrac{E}{D}\lambda- \dfrac{d}{2}} \leq -\lambda v_{N_0} - \pa{\dfrac{E}{D}\lambda- \dfrac{d}{2}},
    \end{align*}
    so it satisfies \(u_{N_0} + \lambda v_{N_0} + \dfrac{E}{D}\lambda- \dfrac{d}{2} < 0\), and by Equation \eqref{eq:1.18}, this is equivalent to \(s_{N_0}^{(4)} < 0\). \\

    \textit{Proof of} (b)\textit{.} Similar to the proof in (a), it is sufficient to show \(t^{(l)}(N_0) = t^{(l)}_{N_0} > 0\) if \(t^{(l)}(m)\) is strictly increasing, and it is sufficient to show \(t^{(l)}(N_0) = t^{(l)}_{N_0} < 0\) if \(t^{(l)}(m)\) is strictly decreasing. \\

    Suppose that \(t^{(l)}(m)\) is strictly increasing. Then it must be \(l \in \{1,2\}\), otherwise if not, then \(l \in \{3,4\}\) and by Lemma \ref{lem:4.1}(e), \(t^{(l)}(m)\) is strictly decreasing which is a contradiction. Based on Equation \eqref{eq:3.2}, \(N_0\) satisfies \(v_{N_0} > \dfrac{\pd{E}}{D} \geq \dfrac{E}{D}\) which is equivalent to \(v_{N_0} - \dfrac{E}{D} > 0\). By Equation \eqref{eq:1.19}, this is just equivalent to \(t_{N_0}^{(l)} > 0\) for \(l \in \{1,2\}\). \\

    Suppose that \(t^{(l)}(m)\) is strictly decreasing. By a similar argument as before, it must be \(l \in \{3,4\}\), otherwise we reach a contradiction. Based on Equation \eqref{eq:3.2}, \(N_0\) satisfies \(v_{N_0} > \dfrac{\pd{E}}{D} \geq - \dfrac{E}{D}\) which is equivalent to \(-v_{N_0} - \dfrac{E}{D} < 0\). By Equation \eqref{eq:1.19}, this is just equivalent to \(t_{N_0}^{(l)} < 0\) for \(l \in \{3,4\}\).
\end{proof}

\vspace{5pt}
\begin{lemma}
    \label{lem:4.3}
    Let \(l \in \{1,2,3,4\}\) and \(N_0 \in \N\) as defined in Equation \eqref{eq:3.2}. The following are all true:
    \begin{enumerate}[label=\normalfont(\alph*)]
        \item All functions \(\pd{s^{(l)}(m)}\), \(\pd{t^{(l)}(m)}\) and \(\pd{s^{(l)}(m)} + \pd{t^{(l)}(m)}\) are continuous on domain \(m \in [1, +\infty)\).
        \item All functions \(\pd{s^{(l)}(m)}\), \(\pd{t^{(l)}(m)}\) and \(\pd{s^{(l)}(m)} + \pd{t^{(l)}(m)}\) are strictly increasing for all real numbers \(m \geq N_0\).
        \item \(\displaystyle\lim_{m \to +\infty} \pd{s^{(l)}(m)} = \displaystyle\lim_{m \to +\infty} \pd{t^{(l)}(m)} = \displaystyle\lim_{m \to +\infty} \pa{\pd{s^{(l)}(m)} + \pd{t^{(l)}(m)}} = +\infty\).
    \end{enumerate}
\end{lemma}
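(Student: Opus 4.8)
The plan is to handle the three parts in increasing order of difficulty, leaning on the monotonicity and sign information already extracted in Lemmas \ref{lem:4.1} and \ref{lem:4.2}, together with the asymptotics of \(u(m)\) and \(v(m)\) from Lemma \ref{lem:3.1}. For part (a), I would first note that \(u(m)\) and \(v(m)\) are finite \(\R\)-linear combinations of the exponentials \(\pa{\alpha + \beta\sqrt{\tau}}^m\) and \(\pa{\alpha - \beta\sqrt{\tau}}^m\), hence continuous (indeed smooth) on \([1, +\infty)\). Since \(s^{(l)}(m)\) and \(t^{(l)}(m)\) are affine combinations of \(u(m)\) and \(v(m)\), they are continuous; composing with the continuous map \(x \mapsto \pd{x}\) and adding preserves continuity, so all three functions \(\pd{s^{(l)}(m)}\), \(\pd{t^{(l)}(m)}\) and \(\pd{s^{(l)}(m)} + \pd{t^{(l)}(m)}\) are continuous on \([1, +\infty)\).

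The substance lies in part (b). The key observation is that because \(\tau\) is a non-square positive integer, \(\sqrt{\tau}\) is irrational, while \(\lambda \in \Z\); hence \(\lambda \neq \pm\sqrt{\tau}\), so for each fixed \(l\) exactly one branch of Lemma \ref{lem:4.1} applies and \(s^{(l)}(m)\) is \emph{strictly monotone in a single direction} on all of \([1, +\infty)\) (and likewise \(t^{(l)}(m)\) by Lemma \ref{lem:4.1}(e)). I then invoke Lemma \ref{lem:4.2}: on \([N_0, +\infty)\) the sign of \(s^{(l)}(m)\) is locked and matches its direction of monotonicity --- positive when increasing, negative when decreasing --- and similarly for \(t^{(l)}(m)\). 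Consequently, on \([N_0, +\infty)\) either \(\pd{s^{(l)}(m)} = s^{(l)}(m)\) with \(s^{(l)}\) increasing, or \(\pd{s^{(l)}(m)} = -s^{(l)}(m)\) with \(s^{(l)}\) decreasing; in both cases \(\pd{s^{(l)}(m)}\) is strictly increasing. The identical argument shows \(\pd{t^{(l)}(m)}\) is strictly increasing on \([N_0, +\infty)\), and the sum of two strictly increasing functions is strictly increasing.

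For part (c), I would treat \(t\) directly: since \(v(m) \to +\infty\) (Lemma \ref{lem:3.1}(c)) and \(t^{(l)}(m) = \pm v(m) - \frac{E}{D}\), the triangle inequality gives \(\pd{t^{(l)}(m)} \geq v(m) - \pd{\frac{E}{D}} \to +\infty\). For \(s\), I would factor \(s^{(l)}(m) = v(m)\pb{(-1)^{\pe{l/2}}\frac{u(m)}{v(m)} - (-1)^{\pe{(l-1)/2}}\lambda} + \pa{\frac{E}{D}\lambda - \frac{d}{2}}\); by Lemma \ref{lem:3.1}(e) the bracketed factor tends to a value of the form \(\pm\sqrt{\tau} \pm \lambda\), which is nonzero by the same irrationality observation, hence bounded away from \(0\) for large \(m\). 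Multiplying by \(v(m) \to +\infty\) forces \(\pd{s^{(l)}(m)} \to +\infty\), and the sum diverges as well.

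The main obstacle I anticipate is purely the bookkeeping in part (b): one must confirm that, for every \(l\), the direction of monotonicity supplied by Lemma \ref{lem:4.1} is exactly the direction whose matching sign is guaranteed by Lemma \ref{lem:4.2}, so that no sign change of \(s^{(l)}\) or \(t^{(l)}\) occurs on \([N_0, +\infty)\). The irrationality of \(\sqrt{\tau}\) is what rules out the degenerate boundary cases \(\lambda = \pm\sqrt{\tau}\) and makes this matching clean; once it is in place, the remaining steps reduce to the elementary facts that \(\pd{\cdot}\) of a sign-definite strictly monotone function is strictly increasing, and that sums preserve both monotonicity and divergence to \(+\infty\).
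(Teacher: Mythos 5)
Your proposal is correct, and for parts (a) and (b) it follows essentially the same route as the paper: continuity is immediate, and the monotonicity of $\pd{s^{(l)}(m)}$ and $\pd{t^{(l)}(m)}$ on $[N_0,+\infty)$ comes from pairing the direction of monotonicity in Lemma \ref{lem:4.1} with the matching sign in Lemma \ref{lem:4.2}, so that the absolute value either preserves an increasing positive function or flips a decreasing negative one. Your explicit remark that $\sqrt{\tau}\notin\Z$ rules out the boundary cases $\lambda=\pm\sqrt{\tau}$ is a point the paper leaves implicit, and it is worth having on record.

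The only genuine divergence is in part (c). The paper argues discretely: by (b) the restrictions $\pa{\pd{s^{(l)}_m}}$ and $\pa{\pd{t^{(l)}_m}}$ are strictly increasing sequences of integers, hence unbounded, and monotonicity of the continuous functions then upgrades this to divergence of $\pd{s^{(l)}(m)}$ and $\pd{t^{(l)}(m)}$ as $m\to+\infty$. You instead argue analytically, writing $s^{(l)}(m)=v(m)\pb{(-1)^{\pe{l/2}}\frac{u(m)}{v(m)}-(-1)^{\pe{(l-1)/2}}\lambda}+\pa{\frac{E}{D}\lambda-\frac{d}{2}}$ and using Lemma \ref{lem:3.1}(e) together with the irrationality of $\sqrt{\tau}$ to see the bracket is eventually bounded away from zero while $v(m)\to+\infty$. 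Both are valid; the paper's version is shorter because it exploits integrality, whereas yours is self-contained at the level of real functions and would survive even if the values $s^{(l)}_m, t^{(l)}_m$ were not integers. No gaps.
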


\begin{proof}
    \textit{Proof of} (a)\textit{.} This is obvious, since both \(s^{(l)}(m)\) and \(s^{(l)}(m)\) as defined in Equation \eqref{eq:4.1} and Equation \eqref{eq:4.2} respectively are continuous functions on domain \(m \in [1, +\infty)\), and absolute value function is continuous on \(\R\). \\
    
    \textit{Proof of} (b)\textit{.} From Lemma \ref{lem:4.1}(a), (b), (c) and (d), the function \(s^{(l)}(m)\) is either strictly increasing or strictly decreasing. Suppose that \(s^{(l)}(m)\) is strictly increasing, then \(s^{(l)}(m_1) < s^{(l)}(m_2)\) for all \(1 \leq m_1 < m_2 < + \infty\). By Lemma \ref{lem:4.2}(a), we have \(s^{(l)}(m) > 0\) for all \(m \geq N_0\). So for any \(N_0 \leq m_1 < m_2 < + \infty\), \(\pd{s^{(l)}(m_1)} = s^{(l)}(m_1) < s^{(l)}(m_2) = \pd{s^{(l)}(m_2)}\), which implies function \(\pd{s^{(l)}(m)}\) is strictly increasing for all \(m \geq N_0\). Now suppose that \(s^{(l)}(m)\) is strictly decreasing, then \(s^{(l)}(m_1) > s^{(l)}(m_2)\) for all \(1 \leq m_1 < m_2 < + \infty\). By Lemma \ref{lem:4.2}(a), we have \(s^{(l)}(m) < 0\) for all \(m \geq N_0\). So for any \(N_0 \leq m_1 < m_2 < + \infty\), \(\pd{s^{(l)}(m_1)} = -s^{(l)}(m_1) < -s^{(l)}(m_2) = \pd{s^{(l)}(m_2)}\), which implies function \(\pd{s^{(l)}(m)}\) is strictly increasing for all \(m \geq N_0\). \\

    From Lemma \ref{lem:4.1}(e), the function \(t^{(l)}(m)\) is either strictly increasing or strictly decreasing. Suppose that \(t^{(l)}(m)\) is strictly increasing, then \(t^{(l)}(m_1) < t^{(l)}(m_2)\) for all \(1 \leq m_1 < m_2 < + \infty\). By Lemma \ref{lem:4.2}(b), we have \(t^{(l)}(m) > 0\) for all \(m \geq N_0\). So for any \(N_0 \leq m_1 < m_2 < + \infty\), \(\pd{t^{(l)}(m_1)} = t^{(l)}(m_1) < t^{(l)}(m_2) = \pd{t^{(l)}(m_2)}\), which implies function \(\pd{t^{(l)}(m)}\) is strictly increasing for all \(m \geq N_0\). Now suppose that \(t^{(l)}(m)\) is strictly decreasing, then \(t^{(l)}(m_1) > t^{(l)}(m_2)\) for all \(1 \leq m_1 < m_2 < + \infty\). By Lemma \ref{lem:4.2}(a), we have \(t^{(l)}(m) < 0\) for all \(m \geq N_0\). So for any \(N_0 \leq m_1 < m_2 < + \infty\), \(\pd{t^{(l)}(m_1)} = -t^{(l)}(m_1) < -t^{(l)}(m_2) = \pd{t^{(l)}(m_2)}\), which implies function \(\pd{t^{(l)}(m)}\) is strictly increasing for all \(m \geq N_0\). \\

    We just showed both functions \(\pd{s^{(l)}(m)}\) and \(\pd{t^{(l)}(m)}\) are strictly increasing for all real numbers \(m \geq N_0\). Since \(\pd{s^{(l)}(m)} \geq 0\) and \(\pd{t^{(l)}(m)} \geq 0\), this follows that their sum, \(\pd{s^{(l)}(m)} + \pd{t^{(l)}(m)}\) must also strictly increasing for all real numbers \(m \geq N_0\). \\

    \textit{Proof of} (c)\textit{.} By Lemma \ref{lem:4.3}(b), both functions \(\pd{s^{(l)}(m)}\) and \(\pd{t^{(l)}(m)}\) are strictly increasing for all \(m \geq N_0\). This implies both sequences \(\pa{\pd{s_m^{(l)}}}\) and \(\pa{\pd{t_m^{(l)}}}\) are strictly increasing. Since \(\pd{s_m^{(l)}}\) and \(\pd{t_m^{(l)}}\) are both integers for all \(m \in \N\), we must have \(\pa{\pd{s_m^{(l)}}} \to +\infty\) and \(\pa{\pd{t_m^{(l)}}} \to +\infty\). This concludes that \(\displaystyle\lim_{m \to +\infty}\pd{s^{(l)}(m)} = +\infty\) and \(\displaystyle\lim_{m \to +\infty}\pd{t^{(l)}(m)} = +\infty\). As a consequence, we obtain
    \[\displaystyle\lim_{m \to +\infty}\pa{\pd{s^{(l)}(m)} + \pd{t^{(l)}(m)}} = +\infty.\]
\end{proof}

\vspace{5pt}
\section{Behavior of \(\pd{s^{(l)}(m)} + \pd{t^{(l)}(m)}\) with Respect to \(M'_l\) where \(l \in \{1,2,3,4\}\)}

We begin by considering another form of positive integer \(M'_l\):

\vspace{5pt}
\begin{lemma}
    \label{lem:5.1}
    Let \(l \in \{1,2,3,4\}\) and \(M'_l\) as defined in Equation \eqref{eq:3.1}. Then
    \begin{align}
        \label{eq:5.1}
        M'_l = \max\pc{\pd{s_{1}^{(l)}}, \pd{s_{\max\pc{N_0, N_l}}^{(l)}}} + \max\pc{\pd{t_{1}^{(l)}}, \pd{t_{\max\pc{N_0, N_l}}^{(l)}}},
    \end{align}
    where \(s_{m}^{(l)}\), \(t_{m}^{(l)}\), \(N_0\) and \(N_l\) are defined as Equation \eqref{eq:1.18}, Equation \eqref{eq:1.19}, Equation \eqref{eq:3.2} and Equation \eqref{eq:3.3} respectively.
\end{lemma}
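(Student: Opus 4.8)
The plan is to reduce the claimed identity to two independent statements, one for the $s$-terms and one for the $t$-terms, and then prove each by splitting on whether $N_l \leq N_0$ or $N_l > N_0$. Since both Equation \eqref{eq:3.1} and Equation \eqref{eq:5.1} express $M'_l$ as a sum of an $s$-maximum and a $t$-maximum, it suffices to establish
\[
\max\pc{\pd{s_1^{(l)}}, \pd{s_{N_0}^{(l)}}, \pd{s_{N_l}^{(l)}}} = \max\pc{\pd{s_1^{(l)}}, \pd{s_{\max\pc{N_0, N_l}}^{(l)}}}
\]
together with the verbatim analogue for $t$. I would prove the $s$-identity in full and then note that the $t$-identity follows identically, using Lemma \ref{lem:4.1}(e) in place of Lemma \ref{lem:4.1}(a)--(d) and Lemma \ref{lem:4.2}(b) in place of Lemma \ref{lem:4.2}(a).

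In the case $N_l > N_0$ the right-hand maximum uses index $\max\pc{N_0, N_l} = N_l$, so I must show the $N_0$-term on the left is redundant. This is immediate from Lemma \ref{lem:4.3}(b): the function $\pd{s^{(l)}(m)}$ is strictly increasing on $[N_0, +\infty)$, and since $N_0 < N_l$ we get $\pd{s_{N_0}^{(l)}} < \pd{s_{N_l}^{(l)}}$. Hence both sides collapse to $\max\pc{\pd{s_1^{(l)}}, \pd{s_{N_l}^{(l)}}}$.

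In the case $N_l \leq N_0$ we have $\max\pc{N_0, N_l} = N_0$, so instead I must show the $N_l$-term on the left is redundant, i.e. $\pd{s_{N_l}^{(l)}} \leq \max\pc{\pd{s_1^{(l)}}, \pd{s_{N_0}^{(l)}}}$. The key point is that by Lemma \ref{lem:4.1}(a)--(d) the signed function $s^{(l)}(m)$ is strictly monotone on the entire domain $[1, +\infty)$ (the borderline $\lambda = \pm\sqrt{\tau}$ cannot occur, since $\lambda \in \Z$ while $\tau$ is a non-square). Consequently, for every $m \in [1, N_0]$ the value $s^{(l)}(m)$ lies in the closed interval with endpoints $s_1^{(l)}$ and $s_{N_0}^{(l)}$, and any real $y$ lying between two reals $p$ and $q$ satisfies $\pd{y} \leq \max\pc{\pd{p}, \pd{q}}$. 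Applying this with $m = N_l \in [1, N_0]$ gives the desired bound. Combining the two cases yields the $s$-identity, the parallel argument yields the $t$-identity, and summing the two reproduces Equation \eqref{eq:5.1}.

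The argument is elementary once the monotonicity facts are in hand, and the step I would guard most carefully is the case $N_l \leq N_0$: there $\pd{s^{(l)}(m)}$ itself need \emph{not} be monotone on $[1, N_0]$, because $s^{(l)}$ may change sign on this interval. This is exactly why I pass to the underlying monotone signed function $s^{(l)}$ and invoke the endpoint estimate $\pd{y} \leq \max\pc{\pd{p}, \pd{q}}$, rather than attempting to compare $\pd{s_{N_l}^{(l)}}$ with $\pd{s_{N_0}^{(l)}}$ directly via Lemma \ref{lem:4.3}(b), whose strict monotonicity conclusion is only guaranteed on $[N_0, +\infty)$.
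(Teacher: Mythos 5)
Your proposal is correct and follows essentially the same route as the paper: the same case split on $N_0 \geq N_l$ versus $N_0 < N_l$, with the second case handled by the strict monotonicity of $\pd{s^{(l)}(m)}$ and $\pd{t^{(l)}(m)}$ on $[N_0,+\infty)$ from Lemma \ref{lem:4.3}(b), and the first case by bounding $\pd{s^{(l)}(m)}$ on $[1,N_0]$ by the endpoint values. The only difference is cosmetic: where the paper runs an explicit sign analysis with the Intermediate Value Theorem to establish $\pd{s^{(l)}(m)} \leq \max\pc{\pd{s^{(l)}(1)},\pd{s^{(l)}(N_0)}}$, you package the same fact as ``a monotone function stays between its endpoint values, and $\pd{y}\leq\max\pc{\pd{p},\pd{q}}$ for $y$ between $p$ and $q$,'' which is a tidier statement of the identical estimate.
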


\begin{proof}
    Let \(l \in \{1,2,3,4\}\). We separate the proof into two cases, \(N_0 \geq N_l\) and \(N_0 < N_l\). \\

    \textit{Proof of the case \(N_0 \geq N_l\).} We have \(\max\pc{N_0, N_l} = N_0\), and by comparing Equation \eqref{eq:3.1} and Equation \eqref{eq:5.1}, we require to show
    \begin{align}
        \notag
        &\max\pc{\pd{s_{1}^{(l)}}, \pd{s_{N_0}^{(l)}}, \pd{s_{N_l}^{(l)}}} + \max\pc{\pd{t_{1}^{(l)}}, \pd{t_{N_0}^{(l)}}, \pd{t_{N_l}^{(l)}}} \\
        \label{eq:5.2}
        &\hspace{100pt} = \max\pc{\pd{s_{1}^{(l)}}, \pd{s_{N_0}^{(l)}}} + \max\pc{\pd{t_{1}^{(l)}}, \pd{t_{N_0}^{(l)}}}.
    \end{align}
    The idea is to show
    \begin{align}
        \label{eq:5.3}
        \pd{s^{(l)}(m)} &\leq \max\pc{\pd{s^{(l)}(1)}, \pd{s^{(l)}(N_0)}}, \ \text{and} \\
        \label{eq:5.4}
        \pd{t^{(l)}(m)} &\leq \max\pc{\pd{t^{(l)}(1)}, \pd{t^{(l)}(N_0)}},
    \end{align}
    for all \(m \in [1, N_0]\), and since \(N_l \in [1,N_0]\), hence
    \begin{align*}
        \max\pc{\pd{s_{1}^{(l)}}, \pd{s_{N_0}^{(l)}}} &= \max\pc{\pd{s_{1}^{(l)}}, \pd{s_{N_0}^{(l)}}, \pd{s_{N_l}^{(l)}}}, \ \text{and} \\
        \max\pc{\pd{t_{1}^{(l)}}, \pd{t_{N_0}^{(l)}}} &= \max\pc{\pd{t_{1}^{(l)}}, \pd{t_{N_0}^{(l)}}, \pd{t_{N_l}^{(l)}}},
    \end{align*}
    and therefore Equation \eqref{eq:5.2} follows. \\

    \textit{Proof of \(\pd{s^{(l)}(m)} \leq \max\pc{\pd{s^{(l)}(1)}, \pd{s^{(l)}(N_0)}}\) for all \(m \in [1, N_0]\).} From Lemma \ref{lem:4.1}(a), (b), (c) and (d), the function \(s^{(l)}(m)\) is either strictly increasing or strictly decreasing for \(m \in [1,+\infty)\). Suppose that \(s^{(l)}(m)\) is strictly increasing. Consider the value \(s^{(l)}(1)\). If \(s^{(l)}(1) \geq 0\), then
    \[0 \leq s^{(l)}(1) \leq s^{(l)}(m) \leq s^{(l)}(N_0) \implies 0 \leq \pd{s^{(l)}(1)} \leq \pd{s^{(l)}(m)} \leq \pd{s^{(l)}(N_0)},\]
    for any \(m \in [1,N_0]\). Therefore for any \(m \in [1, N_0]\), it satisfies Equation \eqref{eq:5.3}. Suppose that \(s^{(l)}(1) < 0\). From Lemma \ref{lem:4.2}(a), \(s^{(l)}(N_0) > 0\). Since function \(s^{(l)}(m)\) is continuous in the interval \(m \in [1, N_0]\), then by Intermediate Value Theorem, there exists \(c \in (1, N_0)\) such that \(s^{(l)}(c) = 0\). Since \(s^{(l)}(m)\) is strictly increasing, we have
    \[s^{(l)}(1) \leq s^{(l)}(m) < s^{(l)}(c) = 0 \implies 0 = \pd{s^{(l)}(c)} < \pd{s^{(l)}(m)} \leq \pd{s^{(l)}(1)},\]
    for all \(m \in [1,c)\), and
    \[0 = s^{(l)}(c) < s^{(l)}(m) \leq s^{(l)}(N_0) \implies 0 = \pd{s^{(l)}(c)} < \pd{s^{(l)}(m)} \leq \pd{s^{(l)}(N_0)},\]
    for all \(m \in (c,N_0]\). Therefore for any \(m \in [1, N_0]\), it satisfies Equation \eqref{eq:5.3}. \\

    Suppose that \(s^{(l)}(m)\) is strictly decreasing. Consider the value \(s^{(l)}(1)\). If \(s^{(l)}(1) \leq 0\), then
    \[0 \geq s^{(l)}(1) \geq s^{(l)}(m) \geq s^{(l)}(N_0) \implies 0 \leq \pd{s^{(l)}(1)} \leq \pd{s^{(l)}(m)} \leq \pd{s^{(l)}(N_0)},\]
    for any \(m \in [1,N_0]\). Therefore for any \(m \in [1, N_0]\), it satisfies Equation \eqref{eq:5.3}. Suppose that \(s^{(l)}(1) > 0\). From Lemma \ref{lem:4.2}(a), \(s^{(l)}(N_0) < 0\). Since function \(s^{(l)}(m)\) is continuous in the interval \(m \in [1, N_0]\), then by Intermediate Value Theorem, there exists \(c \in (1, N_0)\) such that \(s^{(l)}(c) = 0\). Since \(s^{(l)}(m)\) is strictly decreasing, we have
    \[s^{(l)}(1) \geq s^{(l)}(m) > s^{(l)}(c) = 0 \implies 0 = \pd{s^{(l)}(c)} < \pd{s^{(l)}(m)} \leq \pd{s^{(l)}(1)},\]
    for all \(m \in [1,c)\), and
    \[0 = s^{(l)}(c) > s^{(l)}(m) \geq s^{(l)}(N_0) \implies 0 = \pd{s^{(l)}(c)} < \pd{s^{(l)}(m)} \leq \pd{s^{(l)}(N_0)},\]
    for all \(m \in (c,N_0]\). Therefore for any \(m \in [1, N_0]\), it satisfies Equation \eqref{eq:5.3}. \\

    \textit{Proof of \(\pd{t^{(l)}(m)} \leq \max\pc{\pd{t^{(l)}(1)}, \pd{t^{(l)}(N_0)}}\) for all \(m \in [1, N_0]\).} From Lemma \ref{lem:4.1}(e), the function \(t^{(l)}(m)\) is either strictly increasing or strictly decreasing for \(m \in [1,+\infty)\). Suppose that \(t^{(l)}(m)\) is strictly increasing. Consider the value \(t^{(l)}(1)\). If \(t^{(l)}(1) \geq 0\), then
    \[0 \leq t^{(l)}(1) \leq t^{(l)}(m) \leq t^{(l)}(N_0) \implies 0 \leq \pd{t^{(l)}(1)} \leq \pd{t^{(l)}(m)} \leq \pd{t^{(l)}(N_0)},\]
    for any \(m \in [1,N_0]\). Therefore for any \(m \in [1, N_0]\), it satisfies Equation \eqref{eq:5.4}. Suppose that \(t^{(l)}(1) < 0\). From Lemma \ref{lem:4.2}(b), \(t^{(l)}(N_0) > 0\). Since function \(t^{(l)}(m)\) is continuous in the interval \(m \in [1, N_0]\), then by Intermediate Value Theorem, there exists \(c \in (1, N_0)\) such that \(t^{(l)}(c) = 0\). Since \(t^{(l)}(m)\) is strictly increasing, we have
    \[t^{(l)}(1) \leq t^{(l)}(m) < t^{(l)}(c) = 0 \implies 0 = \pd{t^{(l)}(c)} < \pd{t^{(l)}(m)} \leq \pd{t^{(l)}(1)},\]
    for all \(m \in [1,c)\), and
    \[0 = t^{(l)}(c) < t^{(l)}(m) \leq t^{(l)}(N_0) \implies 0 = \pd{t^{(l)}(c)} < \pd{t^{(l)}(m)} \leq \pd{t^{(l)}(N_0)},\]
    for all \(m \in (c,N_0]\). Therefore for any \(m \in [1, N_0]\), it satisfies Equation \eqref{eq:5.4}. \\

    Suppose that \(t^{(l)}(m)\) is strictly decreasing. Consider the value \(t^{(l)}(1)\). If \(t^{(l)}(1) \leq 0\), then
    \[0 \geq t^{(l)}(1) \geq t^{(l)}(m) \geq t^{(l)}(N_0) \implies 0 \leq \pd{t^{(l)}(1)} \leq \pd{t^{(l)}(m)} \leq \pd{t^{(l)}(N_0)},\]
    for any \(m \in [1,N_0]\). Therefore for any \(m \in [1, N_0]\), it satisfies Equation \eqref{eq:5.4}. Suppose that \(t^{(l)}(1) > 0\). From Lemma \ref{lem:4.2}(b), \(t^{(l)}(N_0) < 0\). Since function \(t^{(l)}(m)\) is continuous in the interval \(m \in [1, N_0]\), then by Intermediate Value Theorem, there exists \(c \in (1, N_0)\) such that \(t^{(l)}(c) = 0\). Since \(t^{(l)}(m)\) is strictly decreasing, we have
    \[t^{(l)}(1) \geq t^{(l)}(m) > t^{(l)}(c) = 0 \implies 0 = \pd{t^{(l)}(c)} < \pd{t^{(l)}(m)} \leq \pd{t^{(l)}(1)},\]
    for all \(m \in [1,c)\), and
    \[0 = t^{(l)}(c) > t^{(l)}(m) \geq t^{(l)}(N_0) \implies 0 = \pd{t^{(l)}(c)} < \pd{t^{(l)}(m)} \leq \pd{t^{(l)}(N_0)},\]
    for all \(m \in (c,N_0]\). Therefore for any \(m \in [1, N_0]\), it satisfies Equation \eqref{eq:5.4}. \\

    \textit{Proof of the case \(N_0 < N_l\).} We have \(\max\pc{N_0, N_l} = N_l\), and by comparing Equation \eqref{eq:3.1} and Equation \eqref{eq:5.1}, we require to show
    \begin{align}
        \notag
        &\max\pc{\pd{s_{1}^{(l)}}, \pd{s_{N_0}^{(l)}}, \pd{s_{N_l}^{(l)}}} + \max\pc{\pd{t_{1}^{(l)}}, \pd{t_{N_0}^{(l)}}, \pd{t_{N_l}^{(l)}}} \\
        \label{eq:5.5}
        &\hspace{100pt} = \max\pc{\pd{s_{1}^{(l)}}, \pd{s_{N_l}^{(l)}}} + \max\pc{\pd{t_{1}^{(l)}}, \pd{t_{N_l}^{(l)}}}.
    \end{align}
    From Lemma \ref{lem:4.3}(b), \(\pd{s^{(l)}(N_l)} > \pd{s^{(l)}(N_0)}\) and \(\pd{t^{(l)}(N_l)} > \pd{t^{(l)}(N_0)}\), hence
    \begin{align*}
        \max\pc{\pd{s_{1}^{(l)}}, \pd{s_{N_l}^{(l)}}} &= \max\pc{\pd{s_{1}^{(l)}}, \pd{s_{N_0}^{(l)}}, \pd{s_{N_l}^{(l)}}}, \ \text{and} \\
        \max\pc{\pd{t_{1}^{(l)}}, \pd{t_{N_l}^{(l)}}} &= \max\pc{\pd{t_{1}^{(l)}}, \pd{t_{N_0}^{(l)}}, \pd{t_{N_l}^{(l)}}},
    \end{align*}
    and therefore Equation \eqref{eq:5.5} follows.
\end{proof}

\vspace{5pt}
\begin{lemma}
    \label{lem:5.2}
    Let \(l \in \{1,2,3,4\}\) and \(M'_l\) as defined in Equation \eqref{eq:5.1}. For any \(n \in [M'_l, +\infty)\), there exists \(m \in [1,+\infty)\) such that \(\pd{s^{(l)}(m)} + \pd{t^{(l)}(m)} = n\).
\end{lemma}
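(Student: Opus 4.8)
The plan is to prove the statement by a direct application of the Intermediate Value Theorem to the continuous function \(g(m) := \pd{s^{(l)}(m)} + \pd{t^{(l)}(m)}\) on \([1, +\infty)\). By Lemma \ref{lem:4.3}(a) this function is continuous, and by Lemma \ref{lem:4.3}(c) it satisfies \(\lim_{m \to +\infty} g(m) = +\infty\). Hence, once I exhibit a single point \(m_0\) with \(g(m_0) \leq M'_l\), the connectedness of the image of \([m_0, +\infty)\) under \(g\), together with the divergence to \(+\infty\), will force \(g\) to attain every value in \([g(m_0), +\infty) \supseteq [M'_l, +\infty)\).

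The key step is to locate such a point \(m_0\), and the natural candidate is \(m_0 = \max\pc{N_0, N_l}\). Using the representation of \(M'_l\) from Equation \eqref{eq:5.1}, namely \(M'_l = \max\pc{\pd{s_1^{(l)}}, \pd{s_{m_0}^{(l)}}} + \max\pc{\pd{t_1^{(l)}}, \pd{t_{m_0}^{(l)}}}\), I observe that \(\pd{s_{m_0}^{(l)}} \leq \max\pc{\pd{s_1^{(l)}}, \pd{s_{m_0}^{(l)}}}\) and \(\pd{t_{m_0}^{(l)}} \leq \max\pc{\pd{t_1^{(l)}}, \pd{t_{m_0}^{(l)}}}\); adding these two inequalities yields \(g(m_0) = \pd{s_{m_0}^{(l)}} + \pd{t_{m_0}^{(l)}} \leq M'_l\) immediately. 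Here I use that \(m_0 \in \N\), so the restriction of \(g\) to the positive integers gives \(g(m_0) = \pd{s_{m_0}^{(l)}} + \pd{t_{m_0}^{(l)}}\).

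With \(m_0 = \max\pc{N_0, N_l}\) fixed and \(g(m_0) \leq M'_l\) established, I would finish as follows. Fix any \(n \in [M'_l, +\infty)\). Since \(\lim_{m \to +\infty} g(m) = +\infty\), I may choose \(m_1 > m_0\) with \(g(m_1) \geq n\). The function \(g\) is continuous on the closed interval \([m_0, m_1]\) and satisfies \(g(m_0) \leq M'_l \leq n \leq g(m_1)\), so by the Intermediate Value Theorem there exists \(m \in [m_0, m_1] \subseteq [1, +\infty)\) with \(g(m) = n\), which is exactly the required conclusion.

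I do not expect any genuine obstacle here, as the argument is a textbook surjectivity-via-IVT statement. The only point demanding care is the selection of the evaluation point \(m_0\) and the verification that \(g(m_0) \leq M'_l\); this is precisely why the alternative formula for \(M'_l\) proved in Lemma \ref{lem:5.1} (Equation \eqref{eq:5.1}) is invoked rather than the original three-term definition in Equation \eqref{eq:3.1}, since the former exhibits \(M'_l\) as a sum of two maxima, each of which dominates the corresponding absolute value evaluated at \(m_0\).
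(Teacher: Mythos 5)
Your proposal is correct and follows essentially the same route as the paper: both anchor the Intermediate Value Theorem argument at a point where \(\pd{s^{(l)}(m)} + \pd{t^{(l)}(m)} \leq M'_l\) (the paper uses \(N_0\), you use \(\max\pc{N_0, N_l}\), which makes the bound against Equation \eqref{eq:5.1} slightly more immediate) and then invoke continuity from Lemma \ref{lem:4.3}(a) together with divergence from Lemma \ref{lem:4.3}(c). No gaps.
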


\begin{proof}
    By Lemma \ref{lem:4.3}(a), the function \(\pd{s^{(l)}(m)} + \pd{t^{(l)}(m)}\) is continuous on domain \(m \in [1,+\infty)\). Therefore, the function \(\pd{s^{(l)}(m)} + \pd{t^{(l)}(m)}\) has the intermediate value property on interval \([1,+\infty)\). By Lemma \ref{lem:4.3}(b) and Lemma \ref{lem:4.3}(c), the function \(\pd{s^{(l)}(m)} + \pd{t^{(l)}(m)}\) is strictly increasing for all \(m \geq N_0\) and
    \[\displaystyle\lim_{m \to +\infty} \pa{\pd{s^{(l)}(m)} + \pd{t^{(l)}(m)}} = +\infty.\]
    Together with the intermediate value property, this implies that for any \(n \in \R\) in the interval \(\left[\pd{s^{(l)}(N_0)} + \pd{t^{(l)}(N_0)}, +\infty\right)\), there exists \(m \in [N_0,+\infty)\) such that \(\pd{s^{(l)}(m)} + \pd{t^{(l)}(m)} = n\). Notice that
    \begin{align*}
        &\pd{s^{(l)}(N_0)} + \pd{t^{(l)}(N_0)} \\
        &= \pd{s_{N_0}^{(l)}} + \pd{t_{N_0}^{(l)}}, \\
        &\leq \max\pc{\pd{s_{1}^{(l)}}, \pd{s_{\max\pc{N_0, N_l}}^{(l)}}} + \max\pc{\pd{t_{1}^{(l)}}, \pd{t_{\max\pc{N_0, N_l}}^{(l)}}},
    \end{align*}
    which implies
    \begin{align}
        \label{eq:5.6}
        \pd{s^{(l)}(N_0)} + \pd{t^{(l)}(N_0)} &\leq M'_l.
    \end{align}
    This implies
    \[[M'_l, +\infty) \subseteq \left[\pd{s^{(l)}(N_0)} + \pd{t^{(l)}(N_0)}, +\infty\right),\]
    hence for any \(n \in [M'_l, +\infty)\), there exists \(m \in [N_0,+\infty)\) such that \(\pd{s^{(l)}(m)} + \pd{t^{(l)}(m)} = n\). The lemma is then follows.
\end{proof}

\vspace{5pt}
\begin{lemma}
    \label{lem:5.3}
    Let \(l \in \{1,2,3,4\}\), and \(M'_l\), \(N_0\) and \(N_l\) are defined as Equation \eqref{eq:5.1}, Equation \eqref{eq:3.2} and Equation \eqref{eq:3.3} respectively. For any real number \(x > M'_l\), let \(m \in [1,+\infty)\) such that \(\pd{s^{(l)}(m)} + \pd{t^{(l)}(m)} = x\). Then \(m > \max\{N_0, N_l\}\).
\end{lemma}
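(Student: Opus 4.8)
The plan is to prove the contrapositive: I will show that for every real number \(m \in [1, \max\pc{N_0, N_l}]\) one has \(\pd{s^{(l)}(m)} + \pd{t^{(l)}(m)} \leq M'_l\). Granting this, if \(m \in [1,+\infty)\) satisfies \(\pd{s^{(l)}(m)} + \pd{t^{(l)}(m)} = x\) with \(x > M'_l\), then \(m\) cannot lie in \([1, \max\pc{N_0, N_l}]\); since \(m \geq 1\), this forces \(m > \max\pc{N_0, N_l}\), which is the assertion.

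To establish the contrapositive bound, I would reuse the two component-wise estimates established in the proof of Lemma \ref{lem:5.1}, namely that for all \(m \in [1, N_0]\),
\[\pd{s^{(l)}(m)} \leq \max\pc{\pd{s^{(l)}(1)}, \pd{s^{(l)}(N_0)}} \quad \text{and} \quad \pd{t^{(l)}(m)} \leq \max\pc{\pd{t^{(l)}(1)}, \pd{t^{(l)}(N_0)}}.\]
These were derived purely from the strict monotonicity of \(s^{(l)}\) and \(t^{(l)}\) (Lemma \ref{lem:4.1}), the sign information at \(N_0\) (Lemma \ref{lem:4.2}), continuity (Lemma \ref{lem:4.3}(a)), and the Intermediate Value Theorem, and in particular do not depend on the ordering of \(N_0\) and \(N_l\).

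I would then split into the two cases mirroring Lemma \ref{lem:5.1}. If \(N_0 \geq N_l\), then \([1, \max\pc{N_0, N_l}] = [1, N_0]\) and summing the two displayed estimates gives \(\pd{s^{(l)}(m)} + \pd{t^{(l)}(m)} \leq M'_l\) directly from the form \eqref{eq:5.1}. If \(N_0 < N_l\), I would decompose \([1, N_l] = [1, N_0] \cup [N_0, N_l]\). On \([1, N_0]\), the component estimates, together with the strict monotonicity of \(\pd{s^{(l)}(m)}\) and \(\pd{t^{(l)}(m)}\) on \([N_0, +\infty)\) (Lemma \ref{lem:4.3}(b)) which upgrades \(\pd{s^{(l)}(N_0)} \leq \pd{s^{(l)}(N_l)}\) and \(\pd{t^{(l)}(N_0)} \leq \pd{t^{(l)}(N_l)}\), again yield the bound \(M'_l\). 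On \([N_0, N_l]\), the sum \(\pd{s^{(l)}(m)} + \pd{t^{(l)}(m)}\) is strictly increasing by Lemma \ref{lem:4.3}(b), so it is bounded above by its value at \(N_l\), which is at most \(M'_l\) by \eqref{eq:5.1}.

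The main obstacle is that \(\pd{s^{(l)}(m)} + \pd{t^{(l)}(m)}\) need not be monotone on the initial segment \([1, N_0]\), since \(s^{(l)}\) and \(t^{(l)}\) may each change sign there; hence one cannot simply compare endpoint values on this segment. The component-wise estimates from Lemma \ref{lem:5.1}, which already account for these sign changes via the Intermediate Value Theorem, are precisely what make the initial segment tractable, while the monotonicity of the sum handles the tail \([N_0, N_l]\) in the case \(N_l > N_0\).
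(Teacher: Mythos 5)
Your proposal is correct and follows essentially the same route as the paper: both arguments reuse the component-wise bounds \(\pd{s^{(l)}(m)} \leq \max\pc{\pd{s^{(l)}(1)}, \pd{s^{(l)}(N_0)}}\) and \(\pd{t^{(l)}(m)} \leq \max\pc{\pd{t^{(l)}(1)}, \pd{t^{(l)}(N_0)}}\) from the proof of Lemma \ref{lem:5.1} on \([1,N_0]\), and both invoke the monotonicity of \(\pd{s^{(l)}(m)} + \pd{t^{(l)}(m)}\) from Lemma \ref{lem:4.3}(b) to handle \((N_0, N_l]\) when \(N_0 < N_l\). The only difference is presentational: you phrase the second case as a direct contrapositive bound over \([1, N_l]\), whereas the paper rules out \(m \in (N_0, N_l]\) by contradiction.
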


\begin{proof}
    Let \(l \in \{1,2,3,4\}\). We separate the proof into two cases, \(N_0 \geq N_l\) and \(N_0 < N_l\). \\

    \textit{Proof of the case \(N_0 \geq N_l\).} We have \(\max\pc{N_0, N_l} = N_0\) and from Equation \eqref{eq:5.1},
    \begin{align}
        \label{eq:5.7}
        M'_l = \max\pc{\pd{s_{1}^{(l)}}, \pd{s_{N_0}^{(l)}}} + \max\pc{\pd{t_{1}^{(l)}}, \pd{t_{N_0}^{(l)}}}.
    \end{align}
    Hence, we require to show if \(\pd{s^{(l)}(m)} + \pd{t^{(l)}(m)} > M'_l\), where \(M'_l\) is as Equation \eqref{eq:5.7}, then \(m > N_0\). From the proof in Lemma \ref{lem:5.1}, we already showed
    \begin{align*}
        \pd{s^{(l)}(m)} &\leq \max\pc{\pd{s^{(l)}(1)}, \pd{s^{(l)}(N_0)}}, \ \text{and} \\
        \pd{t^{(l)}(m)} &\leq \max\pc{\pd{t^{(l)}(1)}, \pd{t^{(l)}(N_0)}},
    \end{align*}
    for all \(m \in [1, N_0]\) (Equation \eqref{eq:5.3} and Equation \eqref{eq:5.4}), so we obtain \(\pd{s^{(l)}(m)} + \pd{t^{(l)}(m)} \leq M'_l\) for all \(m \in [1, N_0]\). This implies if there exists \(m \in [1,+\infty)\) such that \(\pd{s^{(l)}(m)} + \pd{t^{(l)}(m)} > M'_l\), then it must be \(m > N_0\). \\
    
    \textit{Proof of the case \(N_0 < N_l\).} We have \(\max\pc{N_0, N_l} = N_l\) and from Equation \eqref{eq:5.1},
    \begin{align}
        \label{eq:5.8}
        M'_l = \max\pc{\pd{s_{1}^{(l)}}, \pd{s_{N_l}^{(l)}}} + \max\pc{\pd{t_{1}^{(l)}}, \pd{t_{N_l}^{(l)}}}.
    \end{align}
    Hence, we require to show if \(\pd{s^{(l)}(m)} + \pd{t^{(l)}(m)} > M'_l\), where \(M'_l\) is as Equation \eqref{eq:5.8}, then \(m > N_l\). From Lemma \ref{lem:4.3}(b), \(\pd{s^{(l)}(N_l)} > \pd{s^{(l)}(N_0)}\) and \(\pd{t^{(l)}(N_l)} > \pd{t^{(l)}(N_0)}\). Then from Equation \eqref{eq:5.8},
    \begin{align*}
        M'_l &= \max\pc{\pd{s^{(l)}(1)}, \pd{s^{(l)}(N_l)}} + \max\pc{\pd{t^{(l)}(1)}, \pd{t^{(l)}(N_l)}}, \\
        &\geq \max\pc{\pd{s^{(l)}(1)}, \pd{s^{(l)}(N_0)}} + \max\pc{\pd{t^{(l)}(1)}, \pd{t^{(l)}(N_0)}},
    \end{align*}
    where the last line is the value \(M'_l\) in the form of Equation \eqref{eq:5.7}. Hence, we can conclude \(m > N_0\) based on the previous proof. Therefore, it is sufficient to show \(m \notin (N_0, N_l]\). Suppose that \(m \in (N_0, N_l]\). By Lemma \ref{lem:4.3}(b), \(\pd{s^{(l)}(m)} \leq \pd{s^{(l)}(N_l)}\) and \(\pd{t^{(l)}(m)} \leq \pd{t^{(l)}(N_l)}\). This implies that
    \begin{align*}
        \pd{s^{(l)}(m)} + \pd{t^{(l)}(m)} &\leq \pd{s^{(l)}(N_l)} + \pd{t^{(l)}(N_l)}, \\
        &\leq \max\pc{\pd{s^{(l)}(1)}, \pd{s^{(l)}(N_l)}} + \max\pc{\pd{t^{(l)}(1)}, \pd{t^{(l)}(N_l)}},
    \end{align*}
    where the last line is the value \(M'_l\) in the form Equation \eqref{eq:5.8}. But we get \(\pd{s^{(l)}(m)} + \pd{t^{(l)}(m)} \leq M'_l\), where \(M'_l\) is as Equation \eqref{eq:5.8}, which contradicts with the assumption \(\pd{s^{(l)}(m)} + \pd{t^{(l)}(m)} > M'_l\). So, \(m \notin (N_0, N_l]\), as desired.
\end{proof}

\vspace{5pt}
\begin{lemma}
    \label{lem:5.4}
    Let \(l \in \{1,2,3,4\}\) and \(M'_l\) as defined in Equation \eqref{eq:5.1}. For any \(n \in (M'_l, +\infty)\), there exists unique \(m \in [1,+\infty)\) such that \(\pd{s^{(l)}(m)} + \pd{t^{(l)}(m)} = n\).
\end{lemma}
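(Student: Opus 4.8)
The plan is to obtain existence for free from Lemma \ref{lem:5.2} and to derive uniqueness from the strict monotonicity of Lemma \ref{lem:4.3}(b), using Lemma \ref{lem:5.3} to guarantee that every preimage of $n$ lands inside the region where that monotonicity is available.

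First I would dispose of existence. Since $n \in (M'_l, +\infty) \subseteq [M'_l, +\infty)$, Lemma \ref{lem:5.2} applies verbatim and furnishes some $m \in [1,+\infty)$ with $\pd{s^{(l)}(m)} + \pd{t^{(l)}(m)} = n$. Thus at least one solution exists, and it remains only to rule out a second one.

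For uniqueness, suppose $m_1, m_2 \in [1,+\infty)$ both satisfy $\pd{s^{(l)}(m_i)} + \pd{t^{(l)}(m_i)} = n$ for $i = 1,2$. Because $n > M'_l$, Lemma \ref{lem:5.3} applies to each (taking $x = n$ in turn), yielding $m_1 > \max\pc{N_0, N_l}$ and $m_2 > \max\pc{N_0, N_l}$; in particular $m_1, m_2 \in [N_0, +\infty)$. On this interval Lemma \ref{lem:4.3}(b) says that $\pd{s^{(l)}(m)} + \pd{t^{(l)}(m)}$ is strictly increasing, hence injective, so the equality $\pd{s^{(l)}(m_1)} + \pd{t^{(l)}(m_1)} = \pd{s^{(l)}(m_2)} + \pd{t^{(l)}(m_2)}$ forces $m_1 = m_2$. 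This establishes the uniqueness claimed in the lemma.

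There is no genuine obstacle here, as the machinery is already in place; the one point deserving attention is that the \emph{strict} hypothesis $n > M'_l$ is precisely what licenses the invocation of Lemma \ref{lem:5.3}, whose assumption is $x > M'_l$. This explains the asymmetry between the two lemmas: existence is asserted on the closed ray $[M'_l, +\infty)$ (Lemma \ref{lem:5.2}), whereas uniqueness is claimed only on the open ray $(M'_l, +\infty)$, since at the endpoint $n = M'_l$ one can no longer guarantee that a preimage lies beyond $\max\pc{N_0, N_l}$, and the function need not be injective on all of $[1,+\infty)$.
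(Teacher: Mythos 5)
Your proof is correct and follows essentially the same route as the paper: existence is inherited from Lemma \ref{lem:5.2}, and uniqueness comes from combining Lemma \ref{lem:5.3} (to push every preimage of $n$ past $\max\pc{N_0,N_l} \geq N_0$) with the strict monotonicity of Lemma \ref{lem:4.3}(b). The paper phrases the uniqueness step as a contradiction with $m_1 > m_2$ rather than as injectivity, but the content is identical; your explicit remark on existence is a minor addition the paper leaves implicit.
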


\begin{proof}
    Let \(n \in (M'_l, +\infty)\), and suppose that there exist \(m_1, m_2 \in [1,+\infty)\) with \(m_1 > m_2\) such that \(\pd{s^{(l)}(m_1)} + \pd{t^{(l)}(m_1)} = \pd{s^{(l)}(m_2)} + \pd{t^{(l)}(m_2)} = n\). Since \(n > M'_l\), then by Lemma \ref{lem:5.3}, \(m_1 > m_2 > \max\{N_0, N_l\}\). But \(m_1 > m_2 > N_0\), then by Lemma \ref{lem:4.3}(b), \(n = \pd{s^{(l)}(m_1)} + \pd{t^{(l)}(m_1)} > \pd{s^{(l)}(m_2)} + \pd{t^{(l)}(m_2)} = n\), which is a contradiction. Hence, \(m\) is unique.
\end{proof}

\vspace{5pt}
\subsection{Existence of \(N'_l\) where \(l \in \{1,2,3,4\}\)} Let \(l \in \{1,2,3,4\}\) and \(M'_l\) as defined in Equation \eqref{eq:5.1}. Consider the set
\begin{align}
    \label{eq:5.9}
    \Bigl\{m \in [1,+\infty): \pd{s^{(l)}(m)} + \pd{t^{(l)}(m)} \leq M'_l\Bigr\}.
\end{align}
From Lemma \ref{lem:5.2}, there exists \(m \in [1,+\infty)\) such that \(\pd{s^{(l)}(m)} + \pd{t^{(l)}(m)} = M'_l\), therefore the set \eqref{eq:5.9} is non empty. Pick any \(k \in (M'_l, +\infty)\). By Lemma \ref{lem:5.4}, there exists unique \(p \in [1,+\infty)\) such that \(\pd{s^{(l)}(p)} + \pd{t^{(l)}(p)} = k\). From Lemma \ref{eq:5.7}, \(p > \max\{N_0, N_l\} \geq N_0\). By Lemma \ref{eq:4.3}(b), \(\pd{s^{(l)}(m)} + \pd{t^{(l)}(m)}\) is strictly increasing for any \(m \geq N_0\), hence
\[ \pd{s^{(l)}(m)} + \pd{t^{(l)}(m)} \geq \pd{s^{(l)}(p)} + \pd{t^{(l)}(p)} = k > M'_l,\]
for all \(m \in [p, +\infty)\). Therefore the set \eqref{eq:5.9} must be equivalent to the set
\begin{align*}
    \Bigl\{m \in [1,p): \pd{s^{(l)}(m)} + \pd{t^{(l)}(m)} \leq M'_l\Bigr\},
\end{align*}
which is bounded above by \(p\). By the Axiom of completeness, the least upper bound of \eqref{eq:5.9} exists. Let \(N'_l\) to be its supremum,
\begin{align}
    \label{eq:5.10}
    N'_l := \sup \Bigl\{m \in [1,+\infty): \pd{s^{(l)}(m)} + \pd{t^{(l)}(m)} \leq M'_l\Bigr\}.
\end{align}

\vspace{5pt}
\begin{lemma}
    \label{lem:5.5}
    Let \(l \in \{1,2,3,4\}\), and \(M'_l\) and \(N'_l\) are defined as Equation \eqref{eq:5.1} and Equation \eqref{eq:5.10} respectively. The following are all true:
    \begin{enumerate}[label=\normalfont(\alph*)]
        \item For any \(m \in \pa{N'_l, +\infty}\), \(\pd{s^{(l)}(m)} + \pd{t^{(l)}(m)} > M'_l\).
        \item \(\pd{s^{(l)}(N'_l)} + \pd{t^{(l)}(N'_l)} = M'_l\). In other words, \(N'_l\) belongs to the set \eqref{eq:5.9} and therefore \(N'_l\) can be defined as the maximum of \eqref{eq:5.9}.
        \item For any \(m \in \pb{1, N'_l}\), \(\pd{s^{(l)}(m)} + \pd{t^{(l)}(m)} \leq M'_l\).
    \end{enumerate}
\end{lemma}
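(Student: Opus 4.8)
The plan is to treat the single function $g(m) := \pd{s^{(l)}(m)} + \pd{t^{(l)}(m)}$ and the set $\mS := \pc{m \in [1,+\infty) : g(m) \leq M'_l}$, so that by definition $N'_l = \sup \mS$ (the supremum existing by the discussion preceding the lemma). I rely on exactly three ingredients: $g$ is continuous on $[1,+\infty)$ (Lemma \ref{lem:4.3}(a)); $g$ is strictly increasing on $[N_0,+\infty)$ (Lemma \ref{lem:4.3}(b)); and, by the contrapositive of Lemma \ref{lem:5.3}, every $m \leq \max\{N_0,N_l\}$ satisfies $g(m) \leq M'_l$ (since $g(m) > M'_l$ would force $m > \max\{N_0,N_l\}$). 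The last ingredient shows $[1,\max\{N_0,N_l\}] \subseteq \mS$, so in particular $N'_l \geq \max\{N_0,N_l\} \geq N_0$, which places $N'_l$ inside the region where $g$ is strictly increasing; this observation will be what makes part (c) work.

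Part (a) is immediate from the definition of supremum: since $N'_l$ is an upper bound of $\mS$, any $m > N'_l$ lies outside $\mS$, i.e.\ $g(m) > M'_l$. For part (b) I would argue by a two-sided continuity squeeze. On one hand, by the approximation property of the supremum there is a sequence $m_n \in \mS$ with $m_n \to N'_l$; since $g(m_n) \leq M'_l$ for every $n$, continuity gives $g(N'_l) = \lim_n g(m_n) \leq M'_l$, so $N'_l \in \mS$ and the supremum is attained. On the other hand, if $g(N'_l) < M'_l$ held, then continuity at $N'_l$ would furnish a $\delta > 0$ with $g(m) < M'_l$ for all $m \in (N'_l - \delta,\, N'_l + \delta)$; taking $m = N'_l + \tfrac{\delta}{2} > N'_l$ would then exhibit a point of $\mS$ strictly larger than $N'_l$, contradicting that $N'_l$ is an upper bound. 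Hence $g(N'_l) = M'_l$, which is precisely (b).

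For part (c) I would suppose, toward a contradiction, that some $m_0 \in [1, N'_l]$ satisfies $g(m_0) > M'_l$. Applying Lemma \ref{lem:5.3} with $x = g(m_0)$ forces $m_0 > \max\{N_0,N_l\} \geq N_0$, so both $m_0$ and $N'_l$ lie in $[N_0,+\infty)$. Since $m_0 \leq N'_l$ and $g$ is strictly increasing there, monotonicity and part (b) give $g(m_0) \leq g(N'_l) = M'_l$, contradicting $g(m_0) > M'_l$; thus $g(m) \leq M'_l$ for every $m \in [1,N'_l]$. The only genuinely delicate step is part (b), where one must use continuity on \emph{both} sides of $N'_l$ — once to show the supremum is attained and lies in $\mS$, and once to rule out a strict inequality — in order to pin the value down to exactly $M'_l$; parts (a) and (c) then reduce to the supremum property and to the strict monotonicity of $g$ on $[N_0,+\infty)$ combined with Lemma \ref{lem:5.3}.
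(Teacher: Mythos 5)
Your argument is correct, and parts (a) and (c) follow essentially the paper's own route: (a) is just the definition of an upper bound, and (c) combines the contrapositive of Lemma \ref{lem:5.3} with the strict monotonicity of \(\pd{s^{(l)}(m)} + \pd{t^{(l)}(m)}\) on \([N_0,+\infty)\) and the value at \(N'_l\) from (b); the paper reaches the same conclusion by splitting \([1,N'_l]\) into \([1,N_0]\), \((N_0,N'_l)\) and the endpoint, but the ingredients are identical. Where you genuinely diverge is part (b). The paper rules out \(\pd{s^{(l)}(N'_l)} + \pd{t^{(l)}(N'_l)} < M'_l\) by combining continuity, strict monotonicity and divergence to \(+\infty\) (Lemma \ref{lem:4.3}) to manufacture a point of the set \eqref{eq:5.9} strictly beyond \(N'_l\), and rules out \(> M'_l\) by invoking the uniqueness statement of Lemma \ref{lem:5.4} together with Lemma \ref{lem:5.3} to exhibit an upper bound smaller than \(N'_l\). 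Your two-sided squeeze needs only continuity and the approximation property of the supremum: a sequence in \eqref{eq:5.9} converging to \(N'_l\) gives \(\pd{s^{(l)}(N'_l)} + \pd{t^{(l)}(N'_l)} \leq M'_l\) in one stroke, and a local continuity neighbourhood to the right of \(N'_l\) kills the strict inequality. This is the more elementary argument --- it bypasses Lemmas \ref{lem:5.2} and \ref{lem:5.4} entirely for this step and does not use monotonicity at all in (b) --- whereas the paper's version has the minor advantage of reusing machinery it needs elsewhere anyway. The one point worth making explicit in your write-up is that \(N'_l \geq \max\{N_0,N_l\}\) (which you do establish via the contrapositive of Lemma \ref{lem:5.3}, and which the paper records separately as Lemma \ref{lem:5.6}), since part (c) needs both \(m_0\) and \(N'_l\) to lie in the region where the function is strictly increasing.
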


\begin{proof}
    \textit{Proof of} (a)\textit{.} Suppose that there exists \(m > N'_l\) such that \(\pd{s^{(l)}(m)} + \pd{t^{(l)}(m)} \leq M'_l\), then \(m\) belongs to the set \eqref{eq:5.9}. However since \(N'_l\) is the upper bound of the set \eqref{eq:5.9}, we get \(m \leq N'_l\), which is a contradiction. Therefore \(\pd{s^{(l)}(m)} + \pd{t^{(l)}(m)} > M'_l\) for any \(m \in \pa{N'_l, +\infty}\). \\
    
    \textit{Proof of} (b)\textit{.} Our idea is to rule out the two cases \(\pd{s^{(l)}(N'_l)} + \pd{t^{(l)}(N'_l)} < M'_l\) and \(\pd{s^{(l)}(N'_l)} + \pd{t^{(l)}(N'_l)} > M'_l\), hence it must be \(\pd{s^{(l)}(N'_l)} + \pd{t^{(l)}(N'_l)} = M'_l\). First, suppose that \(\pd{s^{(l)}(N'_l)} + \pd{t^{(l)}(N'_l)} < M'_l\). Then \(\pd{s^{(l)}(N'_l)} + \pd{t^{(l)}(N'_l)} = M'_l - \varepsilon\) for some \(\varepsilon > 0\). We will show there exists \(m > N'_l\) such that \(\pd{s^{(l)}(m)} + \pd{t^{(l)}(m)} = M'_l - \dfrac{\varepsilon}{2} < M'_l\), which contradicts the assumption that \(N'_l\) is the upper bound of set \eqref{eq:5.9}. From Equation \eqref{eq:5.6}, \(\pd{s^{(l)}(N_0)} + \pd{t^{(l)}(N_0)} \leq M'_l\), so \(N_0\) belongs to the set \eqref{eq:5.9}. Since \(N'_l\) is the upper bound of the set \eqref{eq:5.9}, then \(N'_l \geq N_0\). By Lemma \ref{eq:4.3}(a), (b) and (c), the function \(\pd{s^{(l)}(m)} + \pd{t^{(l)}(m)}\) is continuous, strictly increasing for any \(m \geq N_0\), and \(\displaystyle\lim_{m \to +\infty} \pa{\pd{s^{(l)}(m)} + \pd{t^{(l)}(m)}} = +\infty\). Therefore, for any \(n > \pd{s^{(l)}(N'_l)} + \pd{t^{(l)}(N'_l)} = M'_l - \varepsilon\), there exists \(m > N'_l\) such that \(\pd{s^{(l)}(m)} + \pd{t^{(l)}(m)} = n\). We are done once we pick \(n = M'_l - \dfrac{\varepsilon}{2} > M'_l - \varepsilon\). \\

    Next, suppose that \(\pd{s^{(l)}(N'_l)} + \pd{t^{(l)}(N'_l)} > M'_l\). Then \(\pd{s^{(l)}(N'_l)} + \pd{t^{(l)}(N'_l)} = M'_l + \varepsilon\) for some \(\varepsilon > 0\). By Lemma \ref{lem:5.4}, there exists unique \(m \in [1,+\infty)\) such that \(\pd{s^{(l)}(m)} + \pd{t^{(l)}(m)} = M'_l + \dfrac{\varepsilon}{2}\). Since \(M'_l +\varepsilon > M'_l + \dfrac{\varepsilon}{2} > M'_l\), then from Lemma \ref{lem:5.3}, \(N'_l > m > \max\{N_0, N_l\} \geq N_0\). The rest is to claim that \(m\) is an upper bound of set \eqref{eq:5.9}, and since \(m < N'_l\), this contradicts the assumption that \(N'_l\) is the least upper bound. Notice that it is sufficient to show that for any element \(m' \in (m, N'_l]\), \(m'\) does not belong to \eqref{eq:5.9}. Pick any \(m' \in (m, N'_l]\), then \(m' > m > N_0\). Since \(\pd{s^{(l)}(m)} + \pd{t^{(l)}(m)}\) is strictly increasing for any \(m \geq N_0\), then \(\pd{s^{(l)}(m')} + \pd{t^{(l)}(m')} > \pd{s^{(l)}(m)} + \pd{t^{(l)}(m)} = M'_l + \dfrac{\varepsilon}{2} > M'_l\), which implies \(m'\) does not belong to \eqref{eq:5.9}. \\
    
    \textit{Proof of} (c)\textit{.} From Equation \eqref{eq:5.6}, \(\pd{s^{(l)}(N_0)} + \pd{t^{(l)}(N_0)} \leq M'_l\), so \(N_0\) belongs to the set \eqref{eq:5.9}. Since \(N'_l\) is the upper bound of the set \eqref{eq:5.9}, then \(N'_l \geq N_0\). Suppose that \(N'_l = N_0\). Pick any \(m \in [1,N_0]\), then \(m \leq N_0 \leq \max\pc{N_0, N_l}\). By the contrapositive statement of Lemma \ref{lem:5.3}, \(\pd{s^{(l)}(m)} + \pd{t^{(l)}(m)} \leq M'_l\), as desired. \\
    
    Suppose that \(N'_l > N_0\). Since we just showed \(\pd{s^{(l)}(m)} + \pd{t^{(l)}(m)} \leq M'_l\) for all \(m \in [1, N_0]\), and also from Lemma \ref{lem:5.5}(b), \(\pd{s^{(l)}(N'_l)} + \pd{t^{(l)}(N'_l)} = M'_l\), so we are left to show for the case \(m \in (N_0, N'_l)\). Suppose that there exists \(m \in (N_0, N'_l)\) such that \(\pd{s^{(l)}(m)} + \pd{t^{(l)}(m)} > M'_l\). But since \(\pd{s^{(l)}(m)} + \pd{t^{(l)}(m)}\) is strictly increasing for any \(m \geq N_0\), then for \(N_0 < m < N'_l\), we have \(M'_l < \pd{s^{(l)}(m)} + \pd{t^{(l)}(m)} < \pd{s^{(l)}(N'_l)} + \pd{t^{(l)}(N'_l)} = M'_l\), which is a contradiction. 
\end{proof}

\vspace{5pt}
\begin{lemma}
    \label{lem:5.6}
    Let \(l \in \{1,2,3,4\}\), and \(N_0\), \(N_l\) and \(N'_l\) are defined as Equation \eqref{eq:3.2}, Equation \eqref{eq:3.3} and Equation \eqref{eq:5.10} respectively. Then \(N'_l \geq \max\pc{N_0, N_l}\).
\end{lemma}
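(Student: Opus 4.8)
The plan is to show that both positive integers \(N_0\) and \(N_l\) are elements of the set \eqref{eq:5.9}, whose supremum is \(N'_l\) by Equation \eqref{eq:5.10}. Since a supremum dominates every element of its set, this immediately yields \(N'_l \geq N_0\) and \(N'_l \geq N_l\), and therefore \(N'_l \geq \max\pc{N_0, N_l}\), as required. In effect, rather than locating \(N'_l\) exactly, I only need two membership checks together with the defining property of the least upper bound.

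First I would note that \(N_0 \in \N\) (Lemma \ref{lem:3.4}) and \(N_l \in \N\) (by its definition in Equation \eqref{eq:3.3}), so both are genuine points of the domain \([1, +\infty)\) on which the functions \(s^{(l)}(m)\) and \(t^{(l)}(m)\) are defined, and their restrictions to the positive integers agree with \(s_m^{(l)}\) and \(t_m^{(l)}\). The two membership claims are then read off directly from the definition of \(M'_l\). Working with the original form in Equation \eqref{eq:3.1}, namely \(M'_l = \max\pc{\pd{s_{1}^{(l)}}, \pd{s_{N_0}^{(l)}}, \pd{s_{N_l}^{(l)}}} + \max\pc{\pd{t_{1}^{(l)}}, \pd{t_{N_0}^{(l)}}, \pd{t_{N_l}^{(l)}}}\), each of the indices in \(\pc{1, N_0, N_l}\) is one of the sample points over which the two maxima are taken. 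Hence \(\pd{s^{(l)}(N_0)} + \pd{t^{(l)}(N_0)} = \pd{s_{N_0}^{(l)}} + \pd{t_{N_0}^{(l)}} \leq M'_l\) and likewise \(\pd{s^{(l)}(N_l)} + \pd{t^{(l)}(N_l)} = \pd{s_{N_l}^{(l)}} + \pd{t_{N_l}^{(l)}} \leq M'_l\), which is exactly the assertion that \(N_0\) and \(N_l\) belong to the set \eqref{eq:5.9}.

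The only point requiring care is that the set \eqref{eq:5.9} (and thus the supremum \(N'_l\) in Equation \eqref{eq:5.10}) is phrased using the value \(M'_l\) in the form of Equation \eqref{eq:5.1}, whereas the membership estimate above is cleanest with the form of Equation \eqref{eq:3.1}; by Lemma \ref{lem:5.1} these two expressions denote the same integer, so the forms may be used interchangeably. I do not anticipate any real obstacle here: once the identification of the two forms of \(M'_l\) is invoked, the result follows immediately from the fact that \(M'_l\) is a sum of maxima taken over the three sample indices \(1\), \(N_0\), and \(N_l\), so that dropping any one index can only decrease the corresponding maximum.
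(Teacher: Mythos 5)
Your proof is correct and follows essentially the same route as the paper: the paper shows directly that \(\max\pc{N_0, N_l}\) belongs to the set \eqref{eq:5.9} using the form of \(M'_l\) in Equation \eqref{eq:5.1}, while you show that \(N_0\) and \(N_l\) each belong to it using the form in Equation \eqref{eq:3.1}; both then conclude via the upper-bound property of the supremum. Your remark that Lemma \ref{lem:5.1} lets the two forms of \(M'_l\) be used interchangeably is exactly the right point of care, and nothing is missing.
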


\begin{proof}
    From Equation \eqref{eq:5.1},
    \begin{align*}
        M'_l &= \max\pc{\pd{s_{1}^{(l)}}, \pd{s_{\max\pc{N_0, N_l}}^{(l)}}} + \max\pc{\pd{t_{1}^{(l)}}, \pd{t_{\max\pc{N_0, N_l}}^{(l)}}},
    \end{align*}
    \begin{align*}
        &\geq \pd{s_{\max\pc{N_0, N_l}}^{(l)}} + \pd{t_{\max\pc{N_0, N_l}}^{(l)}}, \\
        &= \pd{s^{(l)}\pa{\max\pc{N_0, N_l}}} + \pd{t^{(l)}\pa{\max\pc{N_0, N_l}}}.
    \end{align*}
    This implies that \(\max\pc{N_0, N_l}\) is an element to the set \eqref{eq:5.9}. Since \(N'_l\) is an upper bound of \eqref{eq:5.9}, hence \(N'_l \geq \max\pc{N_0, N_l}\).
\end{proof}

\vspace{5pt}
\section{Sets \(\mD_{A}^{(k)}(x)\) where \(k \in \{0,1,2,3,4\}\)}
In this section, we recall back the set \(\mD_A(x)\) as defined in Equation \eqref{eq:1.14}. Let \(x \in \R\), \(x > 0\). Based from Theorem \ref{thm:1.3}(c), notice that we can rewrite the set \(\mD_A(x)\) as
\begin{align*}
    \mD_{A}(x) &= \Bigl\{(u,v) \in \Z^2: \mF_{A}(u,v) = 0, |u| + |v| \leq x\Bigr\}, \\
    &= \bigcup_{n=0}^{\pe{x}} \Bigl\{(u,v) \in \Z^2: \mF_{A}(u,v) = 0, |u| + |v| = n\Bigr\}, \\
    &= \bigcup_{n=0}^{\pe{x}} \biggl\{\pa{\tilde{u} - \lambda \tilde{v} + \dfrac{E}{D}\lambda - \dfrac{d}{2},\tilde{v} - \dfrac{E}{D}}: \\
    &\hspace{100pt} \pd{\tilde{u} - \lambda \tilde{v} + \dfrac{E}{D}\lambda - \dfrac{d}{2}} + \pd{\tilde{v} - \dfrac{E}{D}} = n, (\tilde{u}, \tilde{v}) \in \mP_{\tau}\biggr\}.
\end{align*}
By applying the fact \(\mP_{\tau} = \displaystyle\bigcup_{k=0}^{4} \mP_{\tau}^{(k)}\), we then have
\begin{align*}
    \mD_{A}(x) &= \bigcup_{n=0}^{\pe{x}} \bigcup_{k=0}^{4} \biggl\{\pa{\tilde{u} - \lambda \tilde{v} + \dfrac{E}{D}\lambda - \dfrac{d}{2},\tilde{v} - \dfrac{E}{D}}: \\
    &\hspace{100pt} \pd{\tilde{u} - \lambda \tilde{v} + \dfrac{E}{D}\lambda - \dfrac{d}{2}} + \pd{\tilde{v} - \dfrac{E}{D}} = n, (\tilde{u}, \tilde{v}) \in \mP^{(k)}_{\tau}\biggr\}, \\
    &= \bigcup_{k=0}^{4} \bigcup_{n=0}^{\pe{x}} \biggl\{\pa{\tilde{u} - \lambda \tilde{v} + \dfrac{E}{D}\lambda - \dfrac{d}{2},\tilde{v} - \dfrac{E}{D}}: \\
    &\hspace{100pt} \pd{\tilde{u} - \lambda \tilde{v} + \dfrac{E}{D}\lambda - \dfrac{d}{2}} + \pd{\tilde{v} - \dfrac{E}{D}} = n, (\tilde{u}, \tilde{v}) \in \mP^{(k)}_{\tau}\biggr\}, \\
    &= \bigcup_{k=0}^{4} \biggl\{\pa{\tilde{u} - \lambda \tilde{v} + \dfrac{E}{D}\lambda - \dfrac{d}{2},\tilde{v} - \dfrac{E}{D}}: \\
    &\hspace{100pt} \pd{\tilde{u} - \lambda \tilde{v} + \dfrac{E}{D}\lambda - \dfrac{d}{2}} + \pd{\tilde{v} - \dfrac{E}{D}} \leq \pe{x}, (\tilde{u}, \tilde{v}) \in \mP^{(k)}_{\tau}\biggr\}.
\end{align*}
For \(k \in \{0,1,2,3,4\}\), define the set
\begin{align}
    \notag
    \mD_{A}^{(k)}(x) &= \biggl\{\pa{\tilde{u} - \lambda \tilde{v} + \dfrac{E}{D}\lambda - \dfrac{d}{2},\tilde{v} - \dfrac{E}{D}}: \\
    \label{eq:6.1}
    &\hspace{70pt} \pd{\tilde{u} - \lambda \tilde{v} + \dfrac{E}{D}\lambda - \dfrac{d}{2}} + \pd{\tilde{v} - \dfrac{E}{D}} \leq \pe{x}, (\tilde{u}, \tilde{v}) \in \mP^{(k)}_{\tau}\biggr\},
\end{align}
so that \(\mD_A(x) = \displaystyle\bigcup_{k=0}^4 \mD_{A}^{(k)}(x)\). By applying Equation \eqref{eq:1.7} and Equation \eqref{eq:1.8}, we can simplify each \(\mD_{A}^{(k)}(x)\), \(k = 0,1,2,3,4\) in Equation \eqref{eq:6.1} as
\begin{align}
    \notag
    \mD_{A}^{(0)}(x) &= \pc{\pa{1 + \dfrac{E}{D}\lambda - \dfrac{d}{2}, - \dfrac{E}{D}} : \pd{1 + \dfrac{E}{D}\lambda- \dfrac{d}{2}} + \dfrac{\pd{E}}{D} \leq \pe{x}} \\
    \label{eq:6.2}
    &\hspace{30pt} \cup \pc{\pa{-1 + \dfrac{E}{D}\lambda - \dfrac{d}{2}, -\dfrac{E}{D}} : \pd{-1 + \dfrac{E}{D}\lambda- \dfrac{d}{2}} + \dfrac{\pd{E}}{D} \leq \pe{x}}, \\
    \label{eq:6.3}
    \mD_{A}^{(l)}(x) &= \Bigl\{\pa{s_m^{(l)}, t_m^{(l)}} : m \in \N, \pd{s_m^{(l)}} + \pd{t_m^{(l)}} \leq \pe{x}\Bigr\}, \ \text{for} \ l \in \{1,2,3,4\},
\end{align}
where \(s_m^{(l)}\) and \(t_m^{(l)}\) are integers as defined in Equation \eqref{eq:1.18} and Equation \eqref{eq:1.19} respectively.

\vspace{5pt}
\begin{lemma}
    \label{lem:6.1}
    Let \(x \in \R\), \(x > 0\). For any \(i,j \in \{0,1,2,3,4\}\) with \(i \neq j\),
    \[\mD_{A}^{(i)}(x) \cap \mD_{A}^{(j)}(x) = \varnothing.\]
\end{lemma}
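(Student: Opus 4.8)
The plan is to realize each $\mD_A^{(k)}(x)$ as the image, under one fixed injective affine map, of a subset of the Pell class $\mP_\tau^{(k)}$, and then transport the disjointness of the five classes $\mP_\tau^{(k)}$ through this injection.

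First I would recall from the proof of Theorem \ref{thm:1.3}(b) and (c) the change of variables $\phi(\tilde u, \tilde v) = \pa{\tilde u - \lambda \tilde v + \frac{E}{D}\lambda - \frac{d}{2},\ \tilde v - \frac{E}{D}}$, together with its explicit inverse $\tilde v = v + \frac{E}{D}$, $\tilde u = u + \lambda v + \frac{d}{2}$. The existence of this inverse shows that $\phi$ is injective on $\Z^2$. By the descriptions in Equation \eqref{eq:6.2} and Equation \eqref{eq:6.3}, each $\mD_A^{(k)}(x)$ is exactly the set of images $\phi(\tilde u, \tilde v)$ with $(\tilde u, \tilde v) \in \mP_\tau^{(k)}$ subject to the extra bound $\pd{u} + \pd{v} \leq \pe{x}$; in particular $\mD_A^{(k)}(x) \subseteq \phi\pa{\mP_\tau^{(k)}}$ for every $k$.

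Next I would argue by contradiction. Suppose $(u,v) \in \mD_A^{(i)}(x) \cap \mD_A^{(j)}(x)$ for some $i \neq j$. Then $(u,v) = \phi(\tilde u_1, \tilde v_1) = \phi(\tilde u_2, \tilde v_2)$ with $(\tilde u_1, \tilde v_1) \in \mP_\tau^{(i)}$ and $(\tilde u_2, \tilde v_2) \in \mP_\tau^{(j)}$. Injectivity of $\phi$ forces $(\tilde u_1, \tilde v_1) = (\tilde u_2, \tilde v_2)$, so this common point lies in $\mP_\tau^{(i)} \cap \mP_\tau^{(j)}$. But the classical decomposition $\mP_\tau = \bigcup_{k=0}^4 \mP_\tau^{(k)}$ recorded in Section 1 is into pairwise disjoint sets, so $\mP_\tau^{(i)} \cap \mP_\tau^{(j)} = \varnothing$, a contradiction. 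Hence the intersection is empty.

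There is no genuine analytic difficulty here: the norm bound $\pd{u} + \pd{v} \leq \pe{x}$ merely shrinks each set and is irrelevant to disjointness, so the whole argument reduces to injectivity of the affine change of variables plus disjointness of the Pell classes. The only point worth stating explicitly is the latter disjointness: the four classes $\mP_\tau^{(l)}$ for $l \in \{1,2,3,4\}$ occupy the four open quadrants via the sign pattern $\pa{(-1)^{\pe{\frac{l}{2}}},\, (-1)^{\pe{\frac{l-1}{2}}}}$ in Equation \eqref{eq:1.8}, while $\mP_\tau^{(0)}$ consists of the two points whose second coordinate is $0$; since $u_m > 0$ and $v_m > 0$ for all $m$, no two of the five classes can share a point.
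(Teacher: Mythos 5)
Your proof is correct, and it reaches the conclusion by a cleaner route than the paper. The paper introduces the unbounded image sets $\mathcal{E}_{A}^{(k)} \supseteq \mD_{A}^{(k)}(x)$ of Equations \eqref{eq:6.4}--\eqref{eq:6.5} and proves their pairwise disjointness directly in the $(u,v)$-plane by a three-part case analysis (class $0$ against the rest, classes $\{1,2\}$ against $\{3,4\}$, then the pairs $\{1,2\}$ and $\{3,4\}$ separately), each case ending in the contradiction $v_m = 0$ or $u_m = 0$. You instead observe that all five sets are images of the five Pell classes $\mP_{\tau}^{(k)}$ under the single affine change of variables $(\tilde u,\tilde v)\mapsto\pa{\tilde u - \lambda\tilde v + \tfrac{E}{D}\lambda - \tfrac{d}{2},\ \tilde v - \tfrac{E}{D}}$, that this map is injective because the proof of Theorem \ref{thm:1.3}(c) exhibits its inverse, and that the $\mP_{\tau}^{(k)}$ are pairwise disjoint (recorded in Section 1 and justified by your quadrant observation); disjointness then transports automatically, and the bound $\pd{u}+\pd{v}\leq\pe{x}$ plays no role, as you note. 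Both arguments ultimately rest on the same facts $u_m, v_m > 0$, but yours avoids the case analysis and also sidesteps a small imprecision in the paper's version, which tacitly assumes the two memberships are witnessed by the same index $m$ (harmless, since equating second coordinates and using that $(v_m)$ is positive and strictly increasing forces the indices to agree, but your injectivity argument never has to address it). The only thing the paper's formulation buys is the explicit coordinate description of the sets $\mathcal{E}_{A}^{(k)}$, which is not in fact needed elsewhere.
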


\begin{proof}
    For \(k = 0,1,2,3,4\), we consider the set \(\mathcal{E}_{A}^{(k)}\):
    \begin{align}
        \label{eq:6.4}
        \mathcal{E}_{A}^{(0)} &= \pc{\pa{1 + \dfrac{E}{D}\lambda - \dfrac{d}{2}, - \dfrac{E}{D}}, \pa{-1 + \dfrac{E}{D}\lambda - \dfrac{d}{2}, - \dfrac{E}{D}}}, \ \text{and} \\
        \label{eq:6.5}
        \mathcal{E}_{A}^{(l)} &= \Bigl\{\pa{s_m^{(l)}, t_m^{(l)}} : m \in \N \Bigr\}, \ \text{for} \ l \in \{1,2,3,4\},
    \end{align}
    where \(s_m^{(l)}\) and \(t_m^{(l)}\) are integers as defined in Equation \eqref{eq:1.18} and Equation \eqref{eq:1.19} respectively. Notice that \(\mD_{A}^{(k)}(x) \subseteq \mathcal{E}_{A}^{(k)}\) for any \(x \in \R\), \(x > 0\) and \(k \in \{0,1,2,3,4\}\). We claim that for any \(i,j \in \{0,1,2,3,4\}\) with \(i \neq j\),
    \[\mathcal{E}_{A}^{(i)} \cap \mathcal{E}_{A}^{(j)} = \varnothing,\]
    so that it implies \(\mD_{A}^{(i)}(x) \cap \mD_{A}^{(j)}(x) = \varnothing\), and therefore the Lemma follows. \\
    
    We first prove \(\mathcal{E}_{A}^{(i)} \cap \mathcal{E}_{A}^{(0)} = \varnothing\) for all \(i \in \{1,2,3,4\}\). Assume that there exists \(\pa{u,v} \in \mathcal{E}_{A}^{(i)} \cap \mathcal{E}_{A}^{(0)}\) for some \(i \in \{1,2,3,4\}\). Since \(\pa{u,v} \in \mathcal{E}_{A}^{(i)}\), then from Equation \eqref{eq:6.5} and Equation \eqref{eq:1.19}, there exists \(m \in \N\) such that \(v = (-1)^{\pe{\frac{i-1}{2}}}v_m - \dfrac{E}{D}\). On the other hand, since \(\pa{u,v} \in \mathcal{E}_{A}^{(0)}\), then from Equation \eqref{eq:6.4}, \(v = - \dfrac{E}{D}\). But this yields \((-1)^{\pe{\frac{i-1}{2}}}v_m = 0\), which is a contradiction since \(v_m \neq 0\) for all \(m \in \N\). \\
    
    Next, we show \(\mathcal{E}_{A}^{(i)} \cap \mathcal{E}_{A}^{(j)} = \varnothing\) for any \(i = 1,2\) and \(j = 3,4\). Assume that there exists \(\pa{u,v} \in \mathcal{E}_{A}^{(i)} \cap \mathcal{E}_{A}^{(j)}\) for some \(i \in \{1,2\}\) and \(j \in \{3,4\}\). Then by Equation \eqref{eq:6.5} and Equation \eqref{eq:1.19}, there exists \(m \in \N\) such that \(v = v_m - \dfrac{E}{D}\) and \(v = -v_m - \dfrac{E}{D}\). But this yields \(v_m = 0\), which is a contradiction. \\
    
    The rest is to show \(\mathcal{E}_{A}^{(1)} \cap \mathcal{E}_{A}^{(2)} = \varnothing\) and \(\mathcal{E}_{A}^{(3)} \cap \mathcal{E}_{A}^{(4)} = \varnothing\). Assume that there exists \(\pa{u,v} \in \mathcal{E}_{A}^{(i)} \cap \mathcal{E}_{A}^{(i+1)}\) for some \(i \in \{1,3\}\). Then by Equation \eqref{eq:6.5} and Equation \eqref{eq:1.18}, there exists \(m \in \N\) such that
    \begin{align}
        \label{eq:6.6}
        u &= (-1)^{\pe{\frac{i}{2}}}u_m - (-1)^{\pe{\frac{i-1}{2}}}\lambda v_m + \dfrac{E}{D}\lambda- \dfrac{d}{2}, \ \text{and} \\
        \label{eq:6.7}
        u &= (-1)^{\pe{\frac{i+1}{2}}}u_m - (-1)^{\pe{\frac{i}{2}}}\lambda v_m + \dfrac{E}{D}\lambda- \dfrac{d}{2}.
    \end{align}
    Since \((-1)^{\pe{\frac{i+1}{2}}} = -(-1)^{\pe{\frac{i}{2}}}\) and \((-1)^{\pe{\frac{i}{2}}} = (-1)^{\pe{\frac{i-1}{2}}}\) for any \(i \in \{1,3\}\), Equation \eqref{eq:6.7} can be rewrite as
    \begin{align}
        \label{eq:6.8}
        u = -(-1)^{\pe{\frac{i}{2}}}u_m - (-1)^{\pe{\frac{i-1}{2}}}\lambda v_m + \dfrac{E}{D}\lambda- \dfrac{d}{2}.
    \end{align}
    Compare Equation \eqref{eq:6.6} and Equation \eqref{eq:6.8}, we get \(u_m = 0\), which is a contradiction since \(u_m \neq 0\) for all \(m \in \N\).
\end{proof}

From \(\mD_A(x) = \displaystyle\bigcup_{k=0}^4 \mD_{A}^{(k)}(x)\) and Lemma \ref{lem:6.1}, we can conclude that
\begin{align}
    \label{eq:6.9}
    \mD_{A}(x) &= \mD_{A}^{(0)}(x) \cup \pa{\bigcup_{l=1}^{4}\mD_{A}^{(l)}(x)}, \ \text{and} \\
    \label{eq:6.10}
    \Bigl|\mD_{A}(x)\Bigr| &= \pd{\mD_{A}^{(0)}(x)} + \pa{\sum_{l=1}^{4}\pd{\mD_{A}^{(l)}(x)}}.
\end{align}

\vspace{5pt}
\section{Number of Elements \(\pd{\mD_{A}^{(k)}(x)}\) where \(k \in \{0,1,2,3,4\}\)}

\vspace{5pt}
\subsection{Number of Elements \(\pd{\mD_{A}^{(l)}(x)}\) where \(l \in \{1,2,3,4\}\)}

\vspace{5pt}
\begin{lemma}
    \label{lem:7.1}
    Let \(l \in \{1,2,3,4\}\), and \(M'_l\) and \(N'_l\) are defined as Equation \eqref{eq:5.1} and Equation \eqref{eq:5.10} respectively. Then
    \[\pd{\mD_{A}^{(l)}\pa{M'_l}} = \pe{N'_l}.\]
\end{lemma}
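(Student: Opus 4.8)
The plan is to reduce the cardinality of $\mD_{A}^{(l)}\pa{M'_l}$ to a count of admissible positive integers $m$, and then identify that count with $\pe{N'_l}$ using the characterization of $N'_l$ supplied by Lemma \ref{lem:5.5}. First I would observe that, since $M'_l$ is a positive integer, $\pe{M'_l} = M'_l$, so by Equation \eqref{eq:6.3}
\[
    \mD_{A}^{(l)}\pa{M'_l} = \Bigl\{\pa{s_m^{(l)}, t_m^{(l)}} : m \in \N, \ \pd{s_m^{(l)}} + \pd{t_m^{(l)}} \leq M'_l\Bigr\}.
\]
To pass from this set of lattice points to a count of the indices $m$, I would establish that the parametrization $m \mapsto \pa{s_m^{(l)}, t_m^{(l)}}$ is injective on $\N$. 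This follows because, for fixed $l$, the second coordinate $t_m^{(l)} = (-1)^{\pe{\frac{l-1}{2}}} v_m - \dfrac{E}{D}$ differs from $v_m$ only by a fixed sign and a fixed additive constant, while $(v_m)$ is strictly increasing by Lemma \ref{lem:3.1}(b) and (c); hence $t_m^{(l)}$ is strictly monotone in $m$, so distinct indices give distinct pairs. Consequently $\pd{\mD_{A}^{(l)}\pa{M'_l}}$ equals the number of positive integers $m$ with $\pd{s_m^{(l)}} + \pd{t_m^{(l)}} \leq M'_l$, which, recalling that $s_m^{(l)} = s^{(l)}(m)$ and $t_m^{(l)} = t^{(l)}(m)$ for integer $m$, is precisely the number of positive integers $m$ satisfying $\pd{s^{(l)}(m)} + \pd{t^{(l)}(m)} \leq M'_l$.

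Next I would pin down exactly which positive integers are admissible using Lemma \ref{lem:5.5}. By part (c), every $m \in \pb{1, N'_l}$ satisfies $\pd{s^{(l)}(m)} + \pd{t^{(l)}(m)} \leq M'_l$; in particular every integer in $\pc{1, 2, \ldots, \pe{N'_l}}$ is admissible. By part (a), every $m \in \pa{N'_l, +\infty}$ satisfies $\pd{s^{(l)}(m)} + \pd{t^{(l)}(m)} > M'_l$; in particular every integer $m \geq \pe{N'_l} + 1$ is inadmissible. Hence the set of admissible positive integers is exactly $\pc{1, 2, \ldots, \pe{N'_l}}$. Since $N'_l \geq \max\pc{N_0, N_l} \geq 1$ by Lemma \ref{lem:5.6}, this set is non-empty and has exactly $\pe{N'_l}$ elements, giving $\pd{\mD_{A}^{(l)}\pa{M'_l}} = \pe{N'_l}$.

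The argument is essentially bookkeeping once the earlier lemmas are in hand, so I do not anticipate a serious obstacle. The one point demanding genuine care is the injectivity of $m \mapsto \pa{s_m^{(l)}, t_m^{(l)}}$: without it, distinct admissible indices could in principle collapse to a single lattice point, and the index count would only be an upper bound for $\pd{\mD_{A}^{(l)}\pa{M'_l}}$. Isolating the $t$-coordinate and invoking strict monotonicity of $(v_m)$ settles this cleanly. A secondary subtlety is the boundary behaviour when $N'_l$ is itself an integer; there Lemma \ref{lem:5.5}(b) guarantees $\pd{s^{(l)}(N'_l)} + \pd{t^{(l)}(N'_l)} = M'_l$, so $N'_l$ is admissible and is correctly included in the count $\pe{N'_l}$, and the formula holds uniformly whether or not $N'_l \in \N$.
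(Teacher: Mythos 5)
Your proposal is correct and follows essentially the same route as the paper: reduce $\pd{\mD_{A}^{(l)}\pa{M'_l}}$ via Equation \eqref{eq:6.3} to counting positive integers $m$ with $\pd{s_m^{(l)}} + \pd{t_m^{(l)}} \leq M'_l$, then use Lemma \ref{lem:5.5}(c) and (a) to identify that set as $\pc{1,\ldots,\pe{N'_l}}$. Your explicit verification that $m \mapsto \pa{s_m^{(l)}, t_m^{(l)}}$ is injective (via strict monotonicity of $t^{(l)}$) is a detail the paper's proof takes for granted, and it is a worthwhile addition rather than a deviation.
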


\begin{proof}
    By Equation \eqref{eq:6.3}, \(\pd{\mD_{A}^{(l)}\pa{M'_l}}\) is equivalent to the number of positive integers \(m\) such that \(\pd{s_m^{(l)}} + \pd{t_m^{(l)}} \leq M'_l\). By applying Lemma \ref{lem:5.5}(c), any integer \(1 \leq m \leq \pe{N'_l}\) satisfies the condition, while by applying Lemma \ref{lem:5.5}(a), any integer \(m \geq \pe{N'_l} + 1\) does not satisfy the condition. Therefore, we must have \(\pd{\mD_{A}^{(l)}\pa{M'_l}} = \pe{N'_l}\).
\end{proof}

\vspace{5pt}
\begin{lemma}
    \label{lem:7.2}
    Let \(l \in \{1,2,3,4\}\) and \(M'_l\) as defined in Equation \eqref{eq:5.1}. For any real number \(x > M'_l\), consider the unique \(M \in [1,+\infty)\) such that \(\pd{s^{(l)}(M)} + \pd{t^{(l)}(M)} = x\). Then
    \[\pd{\mD_{A}^{(l)}\pa{x}} = \pe{M}.\]
\end{lemma}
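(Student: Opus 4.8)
The plan is to turn $\pd{\mD_{A}^{(l)}(x)}$ into a counting problem over positive integers and then locate the cutoff using $M$. By Equation \eqref{eq:6.3}, $\pd{\mD_{A}^{(l)}(x)}$ is the number of distinct pairs $\pa{s_m^{(l)}, t_m^{(l)}}$ with $m \in \N$ and $\pd{s_m^{(l)}} + \pd{t_m^{(l)}} \leq \pe{x}$. Since $t^{(l)}(m)$ is strictly monotone by Lemma \ref{lem:4.1}(e), the map $m \mapsto \pa{s_m^{(l)}, t_m^{(l)}}$ is injective, so $\pd{\mD_{A}^{(l)}(x)}$ equals the number of positive integers $m$ satisfying $\pd{s_m^{(l)}} + \pd{t_m^{(l)}} \leq \pe{x}$. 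Writing $g(m) := \pd{s^{(l)}(m)} + \pd{t^{(l)}(m)}$ for the function studied in Lemma \ref{lem:4.3}, the goal reduces to showing that the positive integers $m$ with $g(m) \leq \pe{x}$ are exactly those with $1 \leq m \leq \pe{M}$.

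The crucial observation is that at a positive integer $m$, the value $g(m) = \pd{s_m^{(l)}} + \pd{t_m^{(l)}}$ is a sum of absolute values of integers, hence a non-negative \emph{integer}. Therefore, for integer $m$, the inequality $g(m) \leq x$ is equivalent to $g(m) \leq \pe{x}$, because an integer is $\leq x$ if and only if it is $\leq \pe{x}$. This is the bridge between the floor constraint defining $\mD_{A}^{(l)}(x)$ and the defining equation $g(M) = x$. Recall also that $x > M'_l$ guarantees, via Lemma \ref{lem:5.4}, the existence and uniqueness of $M$ with $g(M) = x$, and via Lemma \ref{lem:5.3} the bound $M > \max\{N_0, N_l\} \geq N_0$.

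I would then establish two claims. \emph{Upper cutoff:} if $m \geq \pe{M} + 1$, then $m > M \geq N_0$, so strict monotonicity of $g$ on $[N_0, +\infty)$ (Lemma \ref{lem:4.3}(b)) gives $g(m) > g(M) = x \geq \pe{x}$, excluding such $m$. \emph{Inclusion below the cutoff:} for an integer $m$ with $1 \leq m \leq \pe{M} \leq M$, split on whether $m \geq N_0$. If $N_0 \leq m \leq M$, monotonicity gives $g(m) \leq g(M) = x$, and integrality of $g(m)$ upgrades this to $g(m) \leq \pe{x}$. If $m < N_0$, then since $N'_l \geq N_0$ by Lemma \ref{lem:5.6} we have $m \in [1, N'_l]$, so Lemma \ref{lem:5.5}(c) gives $g(m) \leq M'_l$; as $M'_l < x$ and $M'_l \in \Z$ we get $M'_l \leq \pe{x}$, whence $g(m) \leq \pe{x}$. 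This split is exhaustive because $\pe{M} \geq N_0$, which follows from $M > N_0$ with $N_0 \in \N$. The two claims together show that the qualifying integers are precisely $1, 2, \dots, \pe{M}$, giving $\pd{\mD_{A}^{(l)}(x)} = \pe{M}$.

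The main obstacle to handle carefully is the mismatch between the floor $\pe{x}$ appearing in the constraint and the real number $x$ appearing in $g(M) = x$, compounded by the fact that $g$ is only known to be strictly increasing on $[N_0, +\infty)$ and may behave irregularly on $[1, N_0)$. Both difficulties dissolve against the same two tools: the integer-valuedness of $g$ at integer arguments (which collapses the gap between $x$ and $\pe{x}$), and Lemma \ref{lem:5.5}(c), which controls the low range $[1, N_0)$ uniformly through the bound $M'_l \leq \pe{x}$.
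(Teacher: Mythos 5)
Your proposal is correct and follows essentially the same route as the paper: reduce $\pd{\mD_{A}^{(l)}(x)}$ to counting integers $m$ with $\pd{s^{(l)}_m}+\pd{t^{(l)}_m}\leq\pe{x}$, use Lemma \ref{lem:5.3} and Lemma \ref{lem:4.3}(b) to exclude $m\geq\pe{M}+1$, and use Lemma \ref{lem:5.5}(c) together with the integrality of the sum at integer arguments to include $1\leq m\leq\pe{M}$. The only (cosmetic) difference is that you split the lower range at $N_0$ and apply integrality directly, whereas the paper splits at $\pe{N'_l}$ and handles the gap $[\pe{N'_l}+1,\pe{M}]$ by contradiction; your version is slightly more streamlined, and your explicit remark that $m\mapsto\pa{s^{(l)}_m,t^{(l)}_m}$ is injective is a small point the paper leaves implicit.
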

Note: The number \(M\) exists and is unique by Lemma \ref{lem:5.4}.

\begin{proof}
    By Equation \eqref{eq:6.3}, \(\pd{\mD_{A}^{(l)}\pa{x}}\) is equivalent to the number of \(m \in \N\) such that \(\pd{s^{(l)}(m)} + \pd{t^{(l)}(m)} \leq \pe{x}\). We claim that any integer \(1 \leq m \leq \pe{M}\) satisfies the condition, while any integer \(m \geq \pe{M}+1\) does not satisfy the condition, which implies \(\pd{\mD_{A}^{(l)}\pa{x}} = \pe{M}\). \\

    Pick any integer \(m \geq \pe{M}+1\). Since \(x > M'_l\), then from Lemma \ref{lem:5.3}, \(M > \max\{N_0, N_l\} \geq N_0\). By Lemma \ref{eq:4.3}(b), \(\pd{s^{(l)}(m)} + \pd{t^{(l)}(m)}\) is strictly increasing for any \(m \geq N_0\). Since \(m \geq \pe{M}+1 > M > N_0\), then
    \[\pd{s^{(l)}(m)} + \pd{t^{(l)}(m)} > \pd{s^{(l)}(M)} + \pd{t^{(l)}(M)} = x \geq \pe{x},\]
    which does not satisfy the condition \(\pd{s^{(l)}(m)} + \pd{t^{(l)}(m)} \leq \pe{x}\). Now pick any integer \(1 \leq m \leq \pe{N'_l}\). Then from Lemma \ref{lem:5.5}(c), \(\pd{s^{(l)}(m)} + \pd{t^{(l)}(m)} \leq M'_l\). Since \(M'_l \leq \pe{x}\), then \(m\) satisfies the condition \(\pd{s^{(l)}(m)} + \pd{t^{(l)}(m)} \leq \pe{x}\). \\
    
    Notice that we must have
    \begin{align}
        \label{eq:7.1}
        \Bigl\{m \in \N : 1 \leq m \leq \pe{N'_l}\Bigr\} \cap \Bigl\{m \in \N : m \geq \pe{M}+1\Bigr\} = \varnothing,
    \end{align}
    otherwise if not, there exists \(m \in \N\) such that \(\pd{s^{(l)}(m)} + \pd{t^{(l)}(m)} \leq \pe{x}\) (since \(1 \leq m \leq \pe{N'_l}\)) and \(\pd{s^{(l)}(m)} + \pd{t^{(l)}(m)} > \pe{x}\) (since \(m \geq \pe{M}+1\)), which is a contradiction. Therefore, Equation \eqref{eq:7.1} implies \(\pe{N'_l} \leq \pe{M}\). If \(\pe{N'_l} = \pe{M}\), then we are done based on the proof above. Suppose that \(\pe{N'_l} < \pe{M}\). Then we are left to show any integer \(\pe{N'_l} + 1 \leq m \leq \pe{M}\) satisfies the condition \(\pd{s^{(l)}(m)} + \pd{t^{(l)}(m)} \leq \pe{x}\). \\
    
    Suppose that there exists integer \(\pe{N'_l} + 1 \leq m \leq \pe{M}\) such that \(\pd{s^{(l)}(m)} + \pd{t^{(l)}(m)} > \pe{x}\). Since \(m \in \Z\), then \(\pd{s^{(l)}(m)} + \pd{t^{(l)}(m)} \in \Z\) so \(\pd{s^{(l)}(m)} + \pd{t^{(l)}(m)} \geq \pe{x} + 1\). Since \(\pe{N'_l} + 1 > N'_l\), then by Lemma \ref{lem:5.5}(a), \(\pd{s^{(l)}(\pe{N'_l} + 1)} + \pd{t^{(l)}(\pe{N'_l} + 1)} > M'_l\). By applying Lemma \ref{lem:5.3}, this implies \(\pe{N'_l} + 1 > \max\{N_0, N_l\} \geq N_0\). By Lemma \ref{eq:4.3}(b), \(\pd{s^{(l)}(m)} + \pd{t^{(l)}(m)}\) is strictly increasing for any \(m \geq N_0\). So for \(N_0 < \pe{N'_l} + 1 \leq m \leq \pe{M} \leq M\), we obtain
    \[\pe{x} + 1\leq \pd{s^{(l)}(m)} + \pd{t^{(l)}(m)} \leq \pd{s^{(l)}(M)} + \pd{t^{(l)}(M)} = x,\]
    which is a contradiction since \(\pe{x} + 1 > x\) for any \(x \in \R\).
\end{proof}

\vspace{5pt}
\begin{lemma}
    \label{lem:7.3}
    Let \(l \in \{1,2,3,4\}\) and \(M'_l\) as defined in Equation \eqref{eq:5.1}. Pick \(n \in \N\) with \(n \geq M'_l\). If \(n = M'_l\), consider the value \(m = N'_l\) as defined in Equation \eqref{eq:5.10} so that \(\pd{s^{(l)}(N'_l)} + \pd{t^{(l)}(N'_l)} = M'_l\). If \(n > M'_l\), consider the unique \(m \in [1,+\infty)\) such that \(\pd{s^{(l)}(m)} + \pd{t^{(l)}(m)} = n\). Then 
    \[\pd{s^{(l)}(m)} + \pd{t^{(l)}(m)} = \pe{\dfrac{P_l}{2\sqrt{\tau}}\pa{\alpha + \beta\sqrt{\tau}}^m + Q_l} + R_l,\]
    where \(P_l\), \(Q_l\) and \(R_l\) are defined as Equation \eqref{eq:1.15}, Equation \eqref{eq:1.16} and Equation \eqref{eq:1.17} respectively.
\end{lemma}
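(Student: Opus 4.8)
The plan is to strip the absolute values in $\pd{s^{(l)}(m)} + \pd{t^{(l)}(m)}$, expand everything in powers of $\alpha \pm \beta\sqrt{\tau}$, and then exploit the fact that the left-hand side equals the integer $n$ to convert the small "conjugate" term into the floor-plus-$R_l$ correction. The first move is to locate $m$. If $n > M'_l$, then Lemma \ref{lem:5.3} gives $m > \max\{N_0, N_l\}$ directly; if $n = M'_l$, then $m = N'_l$ and Lemma \ref{lem:5.6} gives $N'_l \geq \max\{N_0, N_l\}$. Either way $m \geq N_0$ and $m \geq N_l$, so both the sign results of Section 4 and the smallness estimate of Lemma \ref{lem:3.8} become available. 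Also, in both cases the defining relation for $m$ (together with Lemma \ref{lem:5.5}(b) when $n = M'_l$) gives $\pd{s^{(l)}(m)} + \pd{t^{(l)}(m)} = n \in \Z$, which is the key arithmetic fact exploited later.

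Next I would remove the absolute values. Since $m \geq N_0$, Lemma \ref{lem:4.1} records whether each of $s^{(l)}(m)$ and $t^{(l)}(m)$ is increasing or decreasing, a dichotomy that splits precisely along $\lambda < (-1)^{l-1}\sqrt{\tau}$ versus $\lambda > (-1)^{l-1}\sqrt{\tau}$, and Lemma \ref{lem:4.2} then pins down their signs, so that $\pd{s^{(l)}(m)}$ and $\pd{t^{(l)}(m)}$ can be replaced by $\pm s^{(l)}(m)$ and $\pm t^{(l)}(m)$ with known signs. Substituting the closed forms \eqref{eq:3.4} and \eqref{eq:3.5} for $u(m), v(m)$ and collecting the coefficients of $\pa{\alpha + \beta\sqrt{\tau}}^m$ and $\pa{\alpha - \beta\sqrt{\tau}}^m$, I expect to obtain
\[ \pd{s^{(l)}(m)} + \pd{t^{(l)}(m)} = \dfrac{P_l}{2\sqrt{\tau}}\pa{\alpha + \beta\sqrt{\tau}}^m + Q_l + \dfrac{\bar P_l}{2\sqrt{\tau}}\pa{\alpha - \beta\sqrt{\tau}}^m, \]
for an explicit constant $\bar P_l$ depending on $\lambda$ and $\sqrt{\tau}$. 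Verifying that the leading coefficient is exactly $P_l/(2\sqrt{\tau})$ and the constant term is exactly $Q_l$ is a direct computation in each of the eight cases ($l = 1,2,3,4$ crossed with the two sign regimes), tracking the signs through $(-1)^{\pe{l/2}}$ and $(-1)^{\pe{(l-1)/2}}$.

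Write $\delta_m = \dfrac{\bar P_l}{2\sqrt{\tau}}\pa{\alpha - \beta\sqrt{\tau}}^m$ and $Y = \dfrac{P_l}{2\sqrt{\tau}}\pa{\alpha + \beta\sqrt{\tau}}^m + Q_l$. By Lemma \ref{lem:3.8} (applicable since $m \geq N_l$) one has $\pd{\delta_m} < 1$, and $\delta_m \neq 0$ since $\bar P_l$ is an integer plus a nonzero multiple of the irrational $\sqrt{\tau}$ while $\pa{\alpha - \beta\sqrt{\tau}}^m > 0$ by Lemma \ref{lem:3.2}. Because the left-hand side equals the integer $n$, we get $Y = n - \delta_m$ and hence $\pe{Y} = n + \pe{-\delta_m}$; since $0 < \pd{\delta_m} < 1$ this forces $\pe{-\delta_m} = -1$ when $\delta_m > 0$ and $\pe{-\delta_m} = 0$ when $\delta_m < 0$. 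It then remains to check that $R_l = 1$ exactly when $\delta_m > 0$ and $R_l = 0$ exactly when $\delta_m < 0$: as the sign of $\delta_m$ is the sign of $\bar P_l$, in each of the eight cases the sign change of $\bar P_l$ occurs precisely at one of the boundaries $\lambda = -\sqrt{\tau} + \frac{1-(-1)^l}{2}$ or $\lambda = \sqrt{\tau} - \frac{1+(-1)^l}{2}$ appearing in \eqref{eq:1.17}. Consequently $\pe{Y} + R_l = n = \pd{s^{(l)}(m)} + \pd{t^{(l)}(m)}$, as claimed.

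The main obstacle is not a single hard idea but the disciplined sign bookkeeping across all eight cases: one must simultaneously track $(-1)^{\pe{l/2}}$, $(-1)^{\pe{(l-1)/2}}$, the increasing/decreasing dichotomy, and the resulting signs of $s^{(l)}(m)$ and $t^{(l)}(m)$, and then confirm that the residual coefficient $\bar P_l$ flips sign exactly at the $R_l$-thresholds. Getting this correspondence between the sign of $\delta_m$ and the piecewise definition of $R_l$ exactly right in every regime, while keeping the identifications of $P_l$ and $Q_l$ consistent, is where the care lies.
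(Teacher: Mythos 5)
Your proposal is correct and follows essentially the same route as the paper's proof: locate $m \geq \max\{N_0,N_l\}$ via Lemmas \ref{lem:5.3} and \ref{lem:5.6}, strip the absolute values using the monotonicity and sign results of Lemmas \ref{lem:4.1} and \ref{lem:4.2}, expand via Equations \eqref{eq:3.4}--\eqref{eq:3.5} to isolate the $P_l$, $Q_l$ and conjugate terms, bound the conjugate term by Lemma \ref{lem:3.8}, and use the integrality of $n$ to convert its sign into the $R_l$ correction. Your observation that the conjugate coefficient is nonzero because $\lambda \in \Z$ and $\sqrt{\tau}$ is irrational is the same fact the paper uses implicitly to make its strict inequalities \eqref{eq:7.4}--\eqref{eq:7.5} and \eqref{eq:7.14}--\eqref{eq:7.15} exhaustive.
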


\begin{proof}
    \textit{Proof for \(l \in \{1,3\}\).} Let \(\lambda < \sqrt{\tau}\).  If \(n = M'_l\), then by assumption \(m = N'_l\), so by Lemma \ref{lem:5.6}, \(m = N'_l \geq \max\{N_0, N_l\} \geq N_0\). Otherwise if \(n > M'_l\), then by Lemma \ref{lem:5.3}, \(m > \max\{N_0, N_l\} \geq N_0\). Hence \(n \geq M'_l\) implies \(m \geq \max\{N_0, N_l\} \geq N_0\). \\
    
    By Lemma \ref{lem:4.1}(a), function \(s^{(1)}(m)\) is strictly increasing and function \(s^{(3)}(m)\) is strictly decreasing. Then by Lemma \ref{lem:4.2}(a), \(s^{(1)}(m) > 0\) and \(s^{(3)}(m) < 0\). Similarly, by Lemma \ref{lem:4.1}(e), function \(t^{(1)}(m)\) is strictly increasing and function \(t^{(3)}(m)\) is strictly decreasing. Then by Lemma \ref{lem:4.2}(b), \(t^{(1)}(m) > 0\) and \(t^{(3)}(m) < 0\). By applying Equation \eqref{eq:4.1} and Equation \eqref{eq:4.2}, and notice that \((-1)^{\pe{\frac{l-1}{2}}} = (-1)^{\pe{\frac{l}{2}}}\) for \(l \in \{1,3\}\), we get
    \begin{align}
        \notag
        &\pd{s^{(l)}(m)} + \pd{t^{(l)}(m)} \\
        \notag
        &= (-1)^{\pe{\frac{l}{2}}}\pa{(-1)^{\pe{\frac{l}{2}}}u(m) - (-1)^{\pe{\frac{l-1}{2}}}\lambda v(m) + \dfrac{E}{D}\lambda- \dfrac{d}{2}} \\
        \notag
        &\hspace{200pt} +(-1)^{\pe{\frac{l}{2}}}\pa{(-1)^{\pe{\frac{l-1}{2}}}v(m) - \dfrac{E}{D}}, \\
        \notag
        &= (-1)^{\pe{\frac{l}{2}}}\pa{(-1)^{\pe{\frac{l}{2}}}u(m) - (-1)^{\pe{\frac{l}{2}}}\lambda v(m) + \dfrac{E}{D}\lambda- \dfrac{d}{2}} \\
        \notag
        &\hspace{200pt} +(-1)^{\pe{\frac{l}{2}}}\pa{(-1)^{\pe{\frac{l}{2}}}v(m) - \dfrac{E}{D}}, \\
        \notag
        &= \pa{u(m) - \lambda v(m) + (-1)^{\pe{\frac{l}{2}}}\pa{\dfrac{E}{D}\lambda- \dfrac{d}{2}}} + \pa{v(m) - (-1)^{\pe{\frac{l}{2}}}\dfrac{E}{D}}, \\
        \label{eq:7.2}
        &= u(m) + \pa{1 -\lambda} v(m) + (-1)^{\pe{\frac{l}{2}}}\pa{\dfrac{E}{D}\lambda - \dfrac{E}{D} - \dfrac{d}{2}}.
    \end{align}
    By substituting Equation \eqref{eq:3.4} and Equation \eqref{eq:3.5} into Equation \eqref{eq:7.2}, we get
    \begin{align}
        \notag
        &\pd{s^{(l)}(m)} + \pd{t^{(l)}(m)} \\
        \notag
        &= \dfrac{1}{2}\pc{\pa{\alpha + \beta\sqrt{\tau}}^m + \pa{\alpha - \beta\sqrt{\tau}}^m} \\
        \notag
        &\hspace{15pt} + \dfrac{1}{2\sqrt{\tau}}\pa{1 -\lambda} \pc{\pa{\alpha + \beta\sqrt{\tau}}^m - \pa{\alpha - \beta\sqrt{\tau}}^m} + (-1)^{\pe{\frac{l}{2}}}\pa{\dfrac{E}{D}\lambda - \dfrac{E}{D} - \dfrac{d}{2}}, \\
        \notag
        &= \pa{\dfrac{1}{2\sqrt{\tau}}\pa{1-\lambda + \sqrt{\tau}}\pa{\alpha + \beta\sqrt{\tau}}^m + (-1)^{\pe{\frac{l}{2}}}\pa{\dfrac{E}{D}\lambda - \dfrac{E}{D} - \dfrac{d}{2}}} \\
        \label{eq:7.3}
        &\hspace{180pt} - \pa{\dfrac{1}{2\sqrt{\tau}}\pa{1-\lambda - \sqrt{\tau}}\pa{\alpha - \beta\sqrt{\tau}}^m}.
    \end{align}
    By assumption, \(\lambda < \sqrt{\tau}\). Since \(m \geq \max\{N_0, N_l\} \geq N_l\), then by Lemma \ref{lem:3.8}, \(m\) satisfies Equation \eqref{eq:3.12}. Since \(\dfrac{1}{2\sqrt{\tau}} > 0\) and \(\pa{\alpha - \beta\sqrt{\tau}}^m > 0\), so Equation \eqref{eq:3.12} implies
    \begin{align}
        \label{eq:7.4}
        0 &< \dfrac{1}{2\sqrt{\tau}}\pa{1-\lambda - \sqrt{\tau}}\pa{\alpha - \beta\sqrt{\tau}}^m < 1 \ & \ &\text{if} \ \lambda < 1-\sqrt{\tau}, \\
        \label{eq:7.5}
        -1 &< \dfrac{1}{2\sqrt{\tau}}\pa{1-\lambda - \sqrt{\tau}}\pa{\alpha - \beta\sqrt{\tau}}^m < 0 \ & \ &\text{if} \  1- \sqrt{\tau} < \lambda < \sqrt{\tau}.
    \end{align}
    If \(\lambda < 1-\sqrt{\tau}\), then from Equation \eqref{eq:7.3} and Equation \eqref{eq:7.4},
    \begin{align*}
        \dfrac{1}{2\sqrt{\tau}}\pa{1-\lambda + \sqrt{\tau}}\pa{\alpha + \beta\sqrt{\tau}}^m + (-1)^{\pe{\frac{l}{2}}}\pa{\dfrac{E}{D}\lambda - \dfrac{E}{D} - \dfrac{d}{2}} &> \pd{s^{(l)}(m)} + \pd{t^{(l)}(m)} \ \text{and}, \\
        \dfrac{1}{2\sqrt{\tau}}\pa{1-\lambda + \sqrt{\tau}}\pa{\alpha + \beta\sqrt{\tau}}^m + (-1)^{\pe{\frac{l}{2}}}\pa{\dfrac{E}{D}\lambda - \dfrac{E}{D} - \dfrac{d}{2}} &< \pd{s^{(l)}(m)} + \pd{t^{(l)}(m)} + 1.
    \end{align*}
    By assumption, \(\pd{s^{(l)}(m)} + \pd{t^{(l)}(m)} \in \Z\). Therefore,
    \[\pd{s^{(l)}(m)} + \pd{t^{(l)}(m)} = \pe{\dfrac{1}{2\sqrt{\tau}}\pa{1-\lambda + \sqrt{\tau}}\pa{\alpha + \beta\sqrt{\tau}}^m + (-1)^{\pe{\frac{l}{2}}}\pa{\dfrac{E}{D}\lambda - \dfrac{E}{D} - \dfrac{d}{2}}}.\]
    On the other hand, if \(1- \sqrt{\tau} < \lambda < \sqrt{\tau}\), then from Equation \eqref{eq:7.3} and Equation \eqref{eq:7.5},
    \begin{align*}
        \dfrac{1}{2\sqrt{\tau}}\pa{1-\lambda + \sqrt{\tau}}\pa{\alpha + \beta\sqrt{\tau}}^m + (-1)^{\pe{\frac{l}{2}}}\pa{\dfrac{E}{D}\lambda - \dfrac{E}{D} - \dfrac{d}{2}} &> \pd{s^{(l)}(m)} + \pd{t^{(l)}(m)} - 1 \ \text{and}, \\
        \dfrac{1}{2\sqrt{\tau}}\pa{1-\lambda + \sqrt{\tau}}\pa{\alpha + \beta\sqrt{\tau}}^m + (-1)^{\pe{\frac{l}{2}}}\pa{\dfrac{E}{D}\lambda - \dfrac{E}{D} - \dfrac{d}{2}} &< \pd{s^{(l)}(m)} + \pd{t^{(l)}(m)}.
    \end{align*}
    By assumption, \(\pd{s^{(l)}(m)} + \pd{t^{(l)}(m)} \in \Z\). Therefore,
    \[\pd{s^{(l)}(m)} + \pd{t^{(l)}(m)} = \pe{\dfrac{1}{2\sqrt{\tau}}\pa{1-\lambda + \sqrt{\tau}}\pa{\alpha + \beta\sqrt{\tau}}^m + (-1)^{\pe{\frac{l}{2}}}\pa{\dfrac{E}{D}\lambda - \dfrac{E}{D} - \dfrac{d}{2}}} + 1.\]

    Now let \(\lambda > \sqrt{\tau}\). If \(n = M'_l\), then by assumption \(m = N'_l\), so by Lemma \ref{lem:5.6}, \(m = N'_l \geq \max\{N_0, N_l\} \geq N_0\). Otherwise if \(n > M'_l\), then by Lemma \ref{lem:5.3}, \(m > \max\{N_0, N_l\} \geq N_0\). Hence \(n \geq M'_l\) implies \(m \geq \max\{N_0, N_l\} \geq N_0\). \\
    
    By Lemma \ref{lem:4.1}(b), function \(s^{(1)}(m)\) is strictly decreasing and function \(s^{(3)}(m)\) is strictly increasing. Then by Lemma \ref{lem:4.2}(a), \(s^{(1)}(m) < 0\) and \(s^{(3)}(m) > 0\). Similarly, by Lemma \ref{lem:4.1}(e) and Lemma \ref{lem:4.2}(b), \(t^{(1)}(m) > 0\) and \(t^{(3)}(m) < 0\). By applying Equation \eqref{eq:4.1} and Equation \eqref{eq:4.2}, and notice that \((-1)^{\pe{\frac{l-1}{2}}} = (-1)^{\pe{\frac{l}{2}}}\) for \(l \in \{1,3\}\), we get
    \begin{align}
        \notag
        &\pd{s^{(l)}(m)} + \pd{t^{(l)}(m)} \\
        \notag
        &= -(-1)^{\pe{\frac{l}{2}}}\pa{(-1)^{\pe{\frac{l}{2}}}u(m) - (-1)^{\pe{\frac{l-1}{2}}}\lambda v(m) + \dfrac{E}{D}\lambda- \dfrac{d}{2}} \\
        \notag
        &\hspace{200pt} +(-1)^{\pe{\frac{l}{2}}}\pa{(-1)^{\pe{\frac{l-1}{2}}}v(m) - \dfrac{E}{D}}, \\
        \notag
        &= -(-1)^{\pe{\frac{l}{2}}}\pa{(-1)^{\pe{\frac{l}{2}}}u(m) - (-1)^{\pe{\frac{l}{2}}}\lambda v(m) + \dfrac{E}{D}\lambda- \dfrac{d}{2}} \\
        \notag
        &\hspace{200pt} +(-1)^{\pe{\frac{l}{2}}}\pa{(-1)^{\pe{\frac{l}{2}}}v(m) - \dfrac{E}{D}}, \\
        \notag
        &= \pa{-u(m) + \lambda v(m) - (-1)^{\pe{\frac{l}{2}}}\pa{\dfrac{E}{D}\lambda- \dfrac{d}{2}}} + \pa{v(m) - (-1)^{\pe{\frac{l}{2}}}\dfrac{E}{D}}, \\
        \label{eq:7.6}
        &= -u(m) + \pa{1 +\lambda} v(m) - (-1)^{\pe{\frac{l}{2}}}\pa{\dfrac{E}{D}\lambda + \dfrac{E}{D} - \dfrac{d}{2}}.
    \end{align}
    By substituting Equation \eqref{eq:3.4} and Equation \eqref{eq:3.5} into Equation \eqref{eq:7.6}, we get
    \begin{align}
        \notag
        &\pd{s^{(l)}(m)} + \pd{t^{(l)}(m)} \\
        \notag
        &= -\dfrac{1}{2}\pc{\pa{\alpha + \beta\sqrt{\tau}}^m + \pa{\alpha - \beta\sqrt{\tau}}^m} \\
        \notag
        &\hspace{15pt} + \dfrac{1}{2\sqrt{\tau}}\pa{1 +\lambda} \pc{\pa{\alpha + \beta\sqrt{\tau}}^m - \pa{\alpha - \beta\sqrt{\tau}}^m} - (-1)^{\pe{\frac{l}{2}}}\pa{\dfrac{E}{D}\lambda + \dfrac{E}{D} - \dfrac{d}{2}}, \\
        \notag
        &= \pa{\dfrac{1}{2\sqrt{\tau}}\pa{1+\lambda - \sqrt{\tau}}\pa{\alpha + \beta\sqrt{\tau}}^m - (-1)^{\pe{\frac{l}{2}}}\pa{\dfrac{E}{D}\lambda + \dfrac{E}{D} - \dfrac{d}{2}}} \\
        \label{eq:7.7}
        &\hspace{180pt} - \pa{\dfrac{1}{2\sqrt{\tau}}\pa{1+\lambda + \sqrt{\tau}}\pa{\alpha - \beta\sqrt{\tau}}^m}.
    \end{align}
    By assumption, \(\lambda > \sqrt{\tau}\). Since \(m \geq \max\{N_0, N_l\} \geq N_l\), then by Lemma \ref{lem:3.8}, \(m\) satisfies Equation \eqref{eq:3.13}. Since \(\dfrac{1}{2\sqrt{\tau}} > 0\), \(\pa{1+\lambda + \sqrt{\tau}} > 0\) and \(\pa{\alpha - \beta\sqrt{\tau}}^m > 0\), so Equation \eqref{eq:3.13} implies
    \begin{align}
        \label{eq:7.8}
        0 &< \dfrac{1}{2\sqrt{\tau}}\pa{1+\lambda + \sqrt{\tau}}\pa{\alpha - \beta\sqrt{\tau}}^m < 1.
    \end{align}
    From Equation \eqref{eq:7.7} and Equation \eqref{eq:7.8},
    \begin{align*}
        \dfrac{1}{2\sqrt{\tau}}\pa{1+\lambda - \sqrt{\tau}}\pa{\alpha + \beta\sqrt{\tau}}^m - (-1)^{\pe{\frac{l}{2}}}\pa{\dfrac{E}{D}\lambda + \dfrac{E}{D} - \dfrac{d}{2}} &> \pd{s^{(l)}(m)} + \pd{t^{(l)}(m)} \ \text{and}, \\
        \dfrac{1}{2\sqrt{\tau}}\pa{1+\lambda - \sqrt{\tau}}\pa{\alpha + \beta\sqrt{\tau}}^m - (-1)^{\pe{\frac{l}{2}}}\pa{\dfrac{E}{D}\lambda + \dfrac{E}{D} - \dfrac{d}{2}} &< \pd{s^{(l)}(m)} + \pd{t^{(l)}(m)} + 1.
    \end{align*}
    By assumption, \(\pd{s^{(l)}(m)} + \pd{t^{(l)}(m)} \in \Z\). Therefore,
    \[\pd{s^{(l)}(m)} + \pd{t^{(l)}(m)} = \pe{\dfrac{1}{2\sqrt{\tau}}\pa{1+\lambda - \sqrt{\tau}}\pa{\alpha + \beta\sqrt{\tau}}^m - (-1)^{\pe{\frac{l}{2}}}\pa{\dfrac{E}{D}\lambda + \dfrac{E}{D} - \dfrac{d}{2}}}.\]

    \textit{Proof for \(l \in \{2,4\}\).} Let \(\lambda < -\sqrt{\tau}\). If \(n = M'_l\), then by Lemma \ref{lem:5.6}, \(m = N'_l \geq \max\{N_0, N_l\} \geq N_0\). Otherwise if \(n > M'_l\), then by Lemma \ref{lem:5.3}, \(m > \max\{N_0, N_l\} \geq N_0\). Hence \(n \geq M'_l\) implies \(m \geq \max\{N_0, N_l\} \geq N_0\). \\
    
    By Lemma \ref{lem:4.1}(c), function \(s^{(2)}(m)\) is strictly increasing and function \(s^{(4)}(m)\) is strictly decreasing. Then by Lemma \ref{lem:4.2}(a), \(s^{(2)}(m) > 0\) and \(s^{(4)}(m) < 0\). Similarly, by Lemma \ref{lem:4.1}(e), function \(t^{(2)}(m)\) is strictly increasing and function \(t^{(4)}(m)\) is strictly decreasing. Then by Lemma \ref{lem:4.2}(b), \(t^{(2)}(m) > 0\) and \(t^{(4)}(m) < 0\). By applying Equation \eqref{eq:4.1} and Equation \eqref{eq:4.2}, and notice that \((-1)^{\pe{\frac{l-1}{2}}} = -(-1)^{\pe{\frac{l}{2}}}\) for \(l \in \{2,4\}\), we get
    \begin{align}
        \notag
        &\pd{s^{(l)}(m)} + \pd{t^{(l)}(m)} \\
        \notag
        &= -(-1)^{\pe{\frac{l}{2}}}\pa{(-1)^{\pe{\frac{l}{2}}}u(m) - (-1)^{\pe{\frac{l-1}{2}}}\lambda v(m) + \dfrac{E}{D}\lambda- \dfrac{d}{2}} \\
        \notag
        &\hspace{200pt} -(-1)^{\pe{\frac{l}{2}}}\pa{(-1)^{\pe{\frac{l-1}{2}}}v(m) - \dfrac{E}{D}}, \\
        \notag
        &= -(-1)^{\pe{\frac{l}{2}}}\pa{(-1)^{\pe{\frac{l}{2}}}u(m) + (-1)^{\pe{\frac{l}{2}}}\lambda v(m) + \dfrac{E}{D}\lambda- \dfrac{d}{2}} \\
        \notag
        &\hspace{200pt} -(-1)^{\pe{\frac{l}{2}}}\pa{-(-1)^{\pe{\frac{l}{2}}}v(m) - \dfrac{E}{D}}, \\
        \notag
        &= \pa{-u(m) - \lambda v(m) - (-1)^{\pe{\frac{l}{2}}}\pa{\dfrac{E}{D}\lambda- \dfrac{d}{2}}} + \pa{v(m) + (-1)^{\pe{\frac{l}{2}}}\dfrac{E}{D}}, \\
        \label{eq:7.9}
        &= -u(m) + \pa{1 -\lambda} v(m) - (-1)^{\pe{\frac{l}{2}}}\pa{\dfrac{E}{D}\lambda - \dfrac{E}{D} - \dfrac{d}{2}}.
    \end{align}
    By substituting Equation \eqref{eq:3.4} and Equation \eqref{eq:3.5} into Equation \eqref{eq:7.9}, we get
    \begin{align}
        \notag
        &\pd{s^{(l)}(m)} + \pd{t^{(l)}(m)} \\
        \notag
        &= -\dfrac{1}{2}\pc{\pa{\alpha + \beta\sqrt{\tau}}^m + \pa{\alpha - \beta\sqrt{\tau}}^m} \\
        \notag
        &\hspace{15pt} + \dfrac{1}{2\sqrt{\tau}}\pa{1 -\lambda} \pc{\pa{\alpha + \beta\sqrt{\tau}}^m - \pa{\alpha - \beta\sqrt{\tau}}^m} - (-1)^{\pe{\frac{l}{2}}}\pa{\dfrac{E}{D}\lambda - \dfrac{E}{D} - \dfrac{d}{2}}, \\
        \notag
        &= \pa{\dfrac{1}{2\sqrt{\tau}}\pa{1-\lambda - \sqrt{\tau}}\pa{\alpha + \beta\sqrt{\tau}}^m - (-1)^{\pe{\frac{l}{2}}}\pa{\dfrac{E}{D}\lambda - \dfrac{E}{D} - \dfrac{d}{2}}} \\
        \label{eq:7.10}
        &\hspace{180pt} - \pa{\dfrac{1}{2\sqrt{\tau}}\pa{1-\lambda + \sqrt{\tau}}\pa{\alpha - \beta\sqrt{\tau}}^m}.
    \end{align}
    By assumption, \(\lambda < -\sqrt{\tau}\). Since \(m \geq \max\{N_0, N_l\} \geq N_l\), then by Lemma \ref{lem:3.8}, \(m\) satisfies Equation \eqref{eq:3.12}. Since \(\dfrac{1}{2\sqrt{\tau}} > 0\), \(\pa{1-\lambda + \sqrt{\tau}} > 0\) and \(\pa{\alpha - \beta\sqrt{\tau}}^m > 0\), so Equation \eqref{eq:3.12} implies
    \begin{align}
        \label{eq:7.11}
        0 &< \dfrac{1}{2\sqrt{\tau}}\pa{1-\lambda + \sqrt{\tau}}\pa{\alpha - \beta\sqrt{\tau}}^m < 1.
    \end{align}
    From Equation \eqref{eq:7.10} and Equation \eqref{eq:7.11},
    \begin{align*}
        \dfrac{1}{2\sqrt{\tau}}\pa{1-\lambda - \sqrt{\tau}}\pa{\alpha + \beta\sqrt{\tau}}^m - (-1)^{\pe{\frac{l}{2}}}\pa{\dfrac{E}{D}\lambda - \dfrac{E}{D} - \dfrac{d}{2}} &> \pd{s^{(l)}(m)} + \pd{t^{(l)}(m)} \ \text{and}, \\
        \dfrac{1}{2\sqrt{\tau}}\pa{1-\lambda - \sqrt{\tau}}\pa{\alpha + \beta\sqrt{\tau}}^m - (-1)^{\pe{\frac{l}{2}}}\pa{\dfrac{E}{D}\lambda - \dfrac{E}{D} - \dfrac{d}{2}} &< \pd{s^{(l)}(m)} + \pd{t^{(l)}(m)} + 1.
    \end{align*}
    By assumption, \(\pd{s^{(l)}(m)} + \pd{t^{(l)}(m)} \in \Z\). Therefore,
    \[\pd{s^{(l)}(m)} + \pd{t^{(l)}(m)} = \pe{\dfrac{1}{2\sqrt{\tau}}\pa{1-\lambda - \sqrt{\tau}}\pa{\alpha + \beta\sqrt{\tau}}^m - (-1)^{\pe{\frac{l}{2}}}\pa{\dfrac{E}{D}\lambda - \dfrac{E}{D} - \dfrac{d}{2}}}.\]
    Now let \(\lambda > -\sqrt{\tau}\). If \(n = M'_l\), then by Lemma \ref{lem:5.6}, \(m = N'_l \geq \max\{N_0, N_l\} \geq N_0\). Otherwise if \(n > M'_l\), then by Lemma \ref{lem:5.3}, \(m > \max\{N_0, N_l\} \geq N_0\). Hence \(n \geq M'_l\) implies \(m \geq \max\{N_0, N_l\} \geq N_0\). \\
    
    By Lemma \ref{lem:4.1}(d), function \(s^{(2)}(m)\) is strictly decreasing and function \(s^{(4)}(m)\) is strictly increasing. Then by Lemma \ref{lem:4.2}(a), \(s^{(2)}(m) < 0\) and \(s^{(4)}(m) > 0\). Similarly, by Lemma \ref{lem:4.1}(e) and Lemma \ref{lem:4.2}(b), \(t^{(2)}(m) > 0\) and \(t^{(4)}(m) < 0\). By applying Equation \eqref{eq:4.1} and Equation \eqref{eq:4.2}, and notice that \((-1)^{\pe{\frac{l-1}{2}}} = -(-1)^{\pe{\frac{l}{2}}}\) for \(l \in \{2,4\}\), we get
    \begin{align}
        \notag
        &\pd{s^{(l)}(m)} + \pd{t^{(l)}(m)} \\
        \notag
        &= (-1)^{\pe{\frac{l}{2}}}\pa{(-1)^{\pe{\frac{l}{2}}}u(m) - (-1)^{\pe{\frac{l-1}{2}}}\lambda v(m) + \dfrac{E}{D}\lambda- \dfrac{d}{2}} \\
        \notag
        &\hspace{200pt} -(-1)^{\pe{\frac{l}{2}}}\pa{(-1)^{\pe{\frac{l-1}{2}}}v(m) - \dfrac{E}{D}}, \\
        \notag
        &= (-1)^{\pe{\frac{l}{2}}}\pa{(-1)^{\pe{\frac{l}{2}}}u(m) + (-1)^{\pe{\frac{l}{2}}}\lambda v(m) + \dfrac{E}{D}\lambda- \dfrac{d}{2}} \\
        \notag
        &\hspace{200pt} -(-1)^{\pe{\frac{l}{2}}}\pa{-(-1)^{\pe{\frac{l}{2}}}v(m) - \dfrac{E}{D}}, \\
        \notag
        &= \pa{u(m) + \lambda v(m) + (-1)^{\pe{\frac{l}{2}}}\pa{\dfrac{E}{D}\lambda- \dfrac{d}{2}}} + \pa{v(m) + (-1)^{\pe{\frac{l}{2}}}\dfrac{D}{E}}, \\
        \label{eq:7.12}
        &= u(m) + \pa{1 +\lambda} v(m) + (-1)^{\pe{\frac{l}{2}}}\pa{\dfrac{E}{D}\lambda + \dfrac{E}{D} - \dfrac{d}{2}}.
    \end{align}
    By substituting Equation \eqref{eq:3.4} and Equation \eqref{eq:3.5} into Equation \eqref{eq:7.12}, we get
    \begin{align}
        \notag
        &\pd{s^{(l)}(m)} + \pd{t^{(l)}(m)} \\
        \notag
        &= \dfrac{1}{2}\pc{\pa{\alpha + \beta\sqrt{\tau}}^m + \pa{\alpha - \beta\sqrt{\tau}}^m} \\
        \notag
        &\hspace{15pt} + \dfrac{1}{2\sqrt{\tau}}\pa{1 +\lambda} \pc{\pa{\alpha + \beta\sqrt{\tau}}^m - \pa{\alpha - \beta\sqrt{\tau}}^m} + (-1)^{\pe{\frac{l}{2}}}\pa{\dfrac{E}{D}\lambda + \dfrac{E}{D} - \dfrac{d}{2}}, \\
        \notag
        &= \pa{\dfrac{1}{2\sqrt{\tau}}\pa{1+\lambda + \sqrt{\tau}}\pa{\alpha + \beta\sqrt{\tau}}^m + (-1)^{\pe{\frac{l}{2}}}\pa{\dfrac{E}{D}\lambda + \dfrac{E}{D} - \dfrac{d}{2}}} \\
        \label{eq:7.13}
        &\hspace{180pt} - \pa{\dfrac{1}{2\sqrt{\tau}}\pa{1+\lambda - \sqrt{\tau}}\pa{\alpha - \beta\sqrt{\tau}}^m}.
    \end{align}
    By assumption, \(\lambda > -\sqrt{\tau}\). Since \(m \geq \max\{N_0, N_l\} \geq N_l\), then by Lemma \ref{lem:3.8}, \(m\) satisfies Equation \eqref{eq:3.13}. Since \(\dfrac{1}{2\sqrt{\tau}} > 0\) and \(\pa{\alpha - \beta\sqrt{\tau}}^m > 0\), so Equation \eqref{eq:3.13} implies
    \begin{align}
        \label{eq:7.14}
        0 &< \dfrac{1}{2\sqrt{\tau}}\pa{1+\lambda - \sqrt{\tau}}\pa{\alpha - \beta\sqrt{\tau}}^m < 1 \ & \ &\text{if} \ \lambda > -1+\sqrt{\tau}, \\
        \label{eq:7.15}
        -1 &< \dfrac{1}{2\sqrt{\tau}}\pa{1+\lambda - \sqrt{\tau}}\pa{\alpha - \beta\sqrt{\tau}}^m < 0 \ & \ &\text{if} \  -\sqrt{\tau} < \lambda < -1+\sqrt{\tau}.
    \end{align}
    If \(\lambda > -1+\sqrt{\tau}\), then from Equation \eqref{eq:7.13} and Equation \eqref{eq:7.14},
    \begin{align*}
        \dfrac{1}{2\sqrt{\tau}}\pa{1+\lambda + \sqrt{\tau}}\pa{\alpha + \beta\sqrt{\tau}}^m + (-1)^{\pe{\frac{l}{2}}}\pa{\dfrac{E}{D}\lambda + \dfrac{E}{D} - \dfrac{d}{2}} &> \pd{s^{(l)}(m)} + \pd{t^{(l)}(m)} \ \text{and}, \\
        \dfrac{1}{2\sqrt{\tau}}\pa{1+\lambda + \sqrt{\tau}}\pa{\alpha + \beta\sqrt{\tau}}^m + (-1)^{\pe{\frac{l}{2}}}\pa{\dfrac{E}{D}\lambda + \dfrac{E}{D} - \dfrac{d}{2}} &< \pd{s^{(l)}(m)} + \pd{t^{(l)}(m)} + 1.
    \end{align*}
    By assumption, \(\pd{s^{(l)}(m)} + \pd{t^{(l)}(m)} \in \Z\). Therefore,
    \[\pd{s^{(l)}(m)} + \pd{t^{(l)}(m)} = \pe{\dfrac{1}{2\sqrt{\tau}}\pa{1+\lambda + \sqrt{\tau}}\pa{\alpha + \beta\sqrt{\tau}}^m + (-1)^{\pe{\frac{l}{2}}}\pa{\dfrac{E}{D}\lambda + \dfrac{E}{D} - \dfrac{d}{2}}}.\]
    On the other hand, if \(-\sqrt{\tau} < \lambda < -1+\sqrt{\tau}\), then from Equation \eqref{eq:7.13} and Equation \eqref{eq:7.15},
    \begin{align*}
        \dfrac{1}{2\sqrt{\tau}}\pa{1+\lambda + \sqrt{\tau}}\pa{\alpha + \beta\sqrt{\tau}}^m + (-1)^{\pe{\frac{l}{2}}}\pa{\dfrac{E}{D}\lambda + \dfrac{E}{D} - \dfrac{d}{2}} &> \pd{s^{(l)}(m)} + \pd{t^{(l)}(m)} - 1 \ \text{and}, \\
        \dfrac{1}{2\sqrt{\tau}}\pa{1+\lambda + \sqrt{\tau}}\pa{\alpha + \beta\sqrt{\tau}}^m + (-1)^{\pe{\frac{l}{2}}}\pa{\dfrac{E}{D}\lambda + \dfrac{E}{D} - \dfrac{d}{2}} &< \pd{s^{(l)}(m)} + \pd{t^{(l)}(m)}.
    \end{align*}
    By assumption, \(\pd{s^{(l)}(m)} + \pd{t^{(l)}(m)} \in \Z\). Therefore,
    \[\pd{s^{(l)}(m)} + \pd{t^{(l)}(m)} = \pe{\dfrac{1}{2\sqrt{\tau}}\pa{1+\lambda + \sqrt{\tau}}\pa{\alpha + \beta\sqrt{\tau}}^m + (-1)^{\pe{\frac{l}{2}}}\pa{\dfrac{E}{D}\lambda + \dfrac{E}{D} - \dfrac{d}{2}}} + 1.\]
\end{proof}

\vspace{5pt}
\begin{lemma}
    \label{lem:7.4}
    Let \(l \in \{1,2,3,4\}\) and \(M'_l\) as defined in Equation \eqref{eq:5.1}. For any positive integer \(n \geq M'_l\), 
    \[\pd{\mD_{A}^{(l)}(n)} = \pe{\dfrac{\log\pa{n - R_l + 1- Q_l} - \log P_l + \log \pa{2\sqrt{\tau}}}{\log \pa{\alpha + \beta\sqrt{\tau}}}},\]
    where \(P_l\), \(Q_l\) and \(R_l\) are defined as Equation \eqref{eq:1.15}, Equation \eqref{eq:1.16} and Equation \eqref{eq:1.17} respectively.
\end{lemma}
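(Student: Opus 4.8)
The plan is to read $\pd{\mD_{A}^{(l)}(n)}$ off the chain of Lemmas \ref{lem:7.1}--\ref{lem:7.3} and then invert a logarithm, the only delicate point being an equality between two floor values. Throughout I abbreviate $g(m) := \dfrac{P_l}{2\sqrt{\tau}}\pa{\alpha+\beta\sqrt{\tau}}^m + Q_l$, so that the right-hand side of Lemma \ref{lem:7.3} reads $\pe{g(m)}+R_l$, and I write $B := \dfrac{\log\pa{n - R_l + 1- Q_l} - \log P_l + \log \pa{2\sqrt{\tau}}}{\log \pa{\alpha + \beta\sqrt{\tau}}}$ for the expression appearing inside the floor in the statement.

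First I would dispose of the counting via the previous lemmas. If $n = M'_l$, Lemma \ref{lem:7.1} gives $\pd{\mD_{A}^{(l)}(n)} = \pe{N'_l}$ and I set $M := N'_l$; if $n > M'_l$, Lemma \ref{lem:7.2} gives $\pd{\mD_{A}^{(l)}(n)} = \pe{M}$ for the unique $M \in [1,+\infty)$ with $\pd{s^{(l)}(M)}+\pd{t^{(l)}(M)} = n$ (existence and uniqueness by Lemma \ref{lem:5.4}). In both cases $\pd{\mD_{A}^{(l)}(n)} = \pe{M}$ for the value $M$ of Lemma \ref{lem:7.3}, and $M \geq \max\{N_0,N_l\}$. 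Lemma \ref{lem:7.3} then yields $n = \pe{g(M)} + R_l$, i.e. $\pe{g(M)} = n - R_l$, so that $n - R_l \leq g(M) < n - R_l + 1$.

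Next I would invert $g$. Since $P_l > 0$ and $\alpha + \beta\sqrt{\tau} > 1$, the function $g$ is a strictly increasing bijection onto $\pa{Q_l,+\infty}$ with inverse $g^{-1}(t) = \dfrac{\log\pa{t - Q_l} - \log P_l + \log\pa{2\sqrt{\tau}}}{\log\pa{\alpha+\beta\sqrt{\tau}}}$; positivity of the logarithm's argument at $t = n-R_l+1$ follows from $Q_l < g(M) < n-R_l+1$. Applying $g^{-1}$ to the upper bound gives $M < g^{-1}(n-R_l+1) = B$, whence $\pe{M} \leq \pe{B}$. It therefore remains only to prove the reverse inequality $\pe{M} \geq \pe{B}$, which is equivalent to showing that no integer lies in $\pa{M,B}$ and that $B \notin \Z$.

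This last point is the main obstacle, and I would resolve it using that $g$ takes no integer value at an integer $m \geq \max\{N_0,N_l\}$. Indeed, for such $m$ the number $\pd{s_m^{(l)}}+\pd{t_m^{(l)}}$ is an integer, while the computation in the proof of Lemma \ref{lem:7.3} (applied verbatim to any integer $m \geq \max\{N_0,N_l\}$, the strict estimates coming from Lemma \ref{lem:3.8}, namely Equations \eqref{eq:7.4}, \eqref{eq:7.5}, \eqref{eq:7.8}, \eqref{eq:7.11}, \eqref{eq:7.14}, \eqref{eq:7.15}) expresses $g(m)$ as that integer plus or minus a correction term lying strictly in $\pa{0,1}$; hence $g(m) \notin \Z$. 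Since $M < B$ forces $B > \max\{N_0,N_l\}$, this rules out $B \in \Z$, for otherwise $g(B) = n-R_l+1 \in \Z$ at an integer $B \geq \max\{N_0,N_l\}$. Finally, were there an integer $k$ with $M < k < B$, then $g(k) \in \pa{g(M),\,n-R_l+1} \subseteq \pa{n-R_l,\,n-R_l+1}$, so $\pe{g(k)} = n-R_l$ and $\pd{s_k^{(l)}}+\pd{t_k^{(l)}} = \pe{g(k)}+R_l = n$ by the integer case of Lemma \ref{lem:7.3}; but then $k$ is one of the $\pe{M}$ indices counted by $\pd{\mD_{A}^{(l)}(n)}$ (which, by Lemmas \ref{lem:4.3}(b) and \ref{lem:5.5}(c), form the initial segment $\{1,\dots,\pe{M}\}$), forcing $k \leq \pe{M} \leq M$ and contradicting $k > M$. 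Thus $\pa{M,B}$ contains no integer and $B \notin \Z$, so $\pe{M} = \pe{B}$ and the stated formula follows.
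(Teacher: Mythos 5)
Your proposal is correct, and it follows the paper's skeleton up to the decisive step: both arguments use Lemmas \ref{lem:7.1}, \ref{lem:7.2} and \ref{lem:7.3} to reduce the claim to \(\pe{M} = \pe{B}\) (writing \(g(m) = \frac{P_l}{2\sqrt{\tau}}\pa{\alpha+\beta\sqrt{\tau}}^m + Q_l\) and \(B\) for the expression inside the floor, as you do), and both obtain \(M < B\), hence \(\pe{M} \leq \pe{B}\), by inverting the upper half of \(n - R_l \leq g(M) < n - R_l + 1\). You diverge on the reverse inequality. The paper first proves \(n - R_l - Q_l > 0\) by a four-case computation so that it can take logarithms of the lower half of the inequality and get \(M > g^{-1}(n - R_l)\); it then applies that same inequality a second time, to the integer \(\pe{M}+1\) and the value \(n' = \pd{s^{(l)}(\pe{M}+1)} + \pd{t^{(l)}(\pe{M}+1)} \geq n+1\), concluding \(\pe{M}+1 > g^{-1}(n+1-R_l) = B\). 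You instead show that \(\pa{M,B}\) contains no integer and that \(B \notin \Z\), both via the observation that \(g\) never takes an integer value at an integer argument \(m \geq \max\pc{N_0,N_l}\) --- which is exactly what the displays Equation \eqref{eq:7.3} through Equation \eqref{eq:7.15} in the proof of Lemma \ref{lem:7.3} establish, since they use only \(m \geq \max\pc{N_0,N_l}\) together with the integrality of \(\pd{s_m^{(l)}} + \pd{t_m^{(l)}}\). Your route is shorter: the positivity of the logarithm's argument at \(n - R_l + 1 - Q_l\) comes for free from \(Q_l < g(M) < n - R_l + 1\), so you bypass the paper's page of case analysis proving \(n - R_l - Q_l > 0\). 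The only cost is that you must quietly strengthen Lemma \ref{lem:7.3} from its stated hypotheses (a specific \(m\) attached to a given \(n \geq M'_l\)) to arbitrary integers \(m \geq \max\pc{N_0,N_l}\); you flag this, and the proof of that lemma does support it. Both arguments are sound.
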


\begin{proof}
    If \(n = M'_l\), then from Lemma \ref{lem:5.5}(b), there exists \(m = N'_l\) such that \(\pd{s^{(l)}(N'_l)} + \pd{t^{(l)}(N'_l)} = M'_l\), where \(N'_l\) is defined as Equation \eqref{eq:5.10}. Otherwise if \(n > M'_l\), then from Lemma \ref{lem:5.4}, there exists unique \(m \in [1, +\infty)\) such that \(\pd{s^{(l)}(m)} + \pd{t^{(l)}(m)} = n\). Therefore for any positive integer \(n \geq M'_l\) with its respective \(m \in [1, +\infty)\), by referring to Lemma \ref{lem:7.1} and Lemma \ref{lem:7.2}, we have \(\pd{\mD_{A}^{(l)}\pa{n}} = \pe{m}\). Thus, it is sufficient to show
    \begin{align}
        \label{eq:7.16}
        \pe{\dfrac{\log\pa{n - R_l + 1- Q_l} - \log P_l + \log \pa{2\sqrt{\tau}}}{\log \pa{\alpha + \beta\sqrt{\tau}}}} = \pe{m}.
    \end{align}
    From Lemma \ref{lem:7.3},
    \begin{align*}
        n = \pd{s^{(l)}(m)} + \pd{t^{(l)}(m)} = \pe{\dfrac{P_l}{2\sqrt{\tau}}\pa{\alpha + \beta\sqrt{\tau}}^m + Q_l} + R_l,
    \end{align*}
    where \(P_l\), \(Q_l\) and \(R_l\) are defined as Equation \eqref{eq:1.15}, Equation \eqref{eq:1.16} and Equation \eqref{eq:1.17} respectively. This implies
    \begin{align}
        \label{eq:7.17}
        n-R_l \leq \dfrac{P_l}{2\sqrt{\tau}}\pa{\alpha + \beta\sqrt{\tau}}^m + Q_l < n - R_l + 1.
    \end{align}
    We first claim that \(\dfrac{P_l}{2\sqrt{\tau}}\pa{\alpha + \beta\sqrt{\tau}}^m + Q_l \notin \Z\) by separating into four cases. \\
    
    Consider the case \(l \in \{1,3\}\) and \(\lambda < \sqrt{\tau}\). Then \(\pd{s^{(l)}(m)} + \pd{t^{(l)}(m)}\) is equal to Equation \eqref{eq:7.3}. From Equation \eqref{eq:1.15}, Equation \eqref{eq:1.16} and Equation \eqref{eq:7.3},
    \begin{align*}
        &\dfrac{P_l}{2\sqrt{\tau}}\pa{\alpha + \beta\sqrt{\tau}}^m + Q_l \\
        &= \dfrac{1}{2\sqrt{\tau}}\pa{1-\lambda + \sqrt{\tau}}\pa{\alpha + \beta\sqrt{\tau}}^m + (-1)^{\pe{\frac{l}{2}}}\pa{\dfrac{E}{D}\lambda - \dfrac{E}{D} - \dfrac{d}{2}}, \\
        &= \pd{s^{(l)}(m)} + \pd{t^{(l)}(m)} + \pa{\dfrac{1}{2\sqrt{\tau}}\pa{1-\lambda - \sqrt{\tau}}\pa{\alpha - \beta\sqrt{\tau}}^m}.
    \end{align*}
    Based on Equation \eqref{eq:7.4} and Equation \eqref{eq:7.5}, \(\dfrac{1}{2\sqrt{\tau}}\pa{1-\lambda - \sqrt{\tau}}\pa{\alpha - \beta\sqrt{\tau}}^m \notin \Z\) and by assumption, \(\pd{s^{(l)}(m)} + \pd{t^{(l)}(m)} = n \in \Z\), so \(\dfrac{P_l}{2\sqrt{\tau}}\pa{\alpha + \beta\sqrt{\tau}}^m + Q_l \notin \Z\). \\
    
    Consider the case \(l \in \{1,3\}\) and \(\lambda > \sqrt{\tau}\). Then \(\pd{s^{(l)}(m)} + \pd{t^{(l)}(m)}\) is equal to Equation \eqref{eq:7.7}. From Equation \eqref{eq:1.15}, Equation \eqref{eq:1.16} and Equation \eqref{eq:7.7},
    \begin{align*}
        &\dfrac{P_l}{2\sqrt{\tau}}\pa{\alpha + \beta\sqrt{\tau}}^m + Q_l \\
        &= \dfrac{1}{2\sqrt{\tau}}\pa{1+\lambda - \sqrt{\tau}}\pa{\alpha + \beta\sqrt{\tau}}^m - (-1)^{\pe{\frac{l}{2}}}\pa{\dfrac{E}{D}\lambda + \dfrac{E}{D} - \dfrac{d}{2}}, \\
        &= \pd{s^{(l)}(m)} + \pd{t^{(l)}(m)} + \pa{\dfrac{1}{2\sqrt{\tau}}\pa{1+\lambda + \sqrt{\tau}}\pa{\alpha - \beta\sqrt{\tau}}^m}.
    \end{align*}
    Based on Equation \eqref{eq:7.8}, \(\dfrac{1}{2\sqrt{\tau}}\pa{1+\lambda + \sqrt{\tau}}\pa{\alpha - \beta\sqrt{\tau}}^m \notin \Z\) and by assumption, \(\pd{s^{(l)}(m)} + \pd{t^{(l)}(m)} = n \in \Z\), so \(\dfrac{P_l}{2\sqrt{\tau}}\pa{\alpha + \beta\sqrt{\tau}}^m + Q_l \notin \Z\). \\

    Consider the case \(l \in \{2,4\}\) and \(\lambda < -\sqrt{\tau}\). Then \(\pd{s^{(l)}(m)} + \pd{t^{(l)}(m)}\) is equal to Equation \eqref{eq:7.10}. From Equation \eqref{eq:1.15}, Equation \eqref{eq:1.16} and Equation \eqref{eq:7.10},
    \begin{align*}
        &\dfrac{P_l}{2\sqrt{\tau}}\pa{\alpha + \beta\sqrt{\tau}}^m + Q_l \\
        &= \dfrac{1}{2\sqrt{\tau}}\pa{1-\lambda - \sqrt{\tau}}\pa{\alpha + \beta\sqrt{\tau}}^m - (-1)^{\pe{\frac{l}{2}}}\pa{\dfrac{E}{D}\lambda - \dfrac{E}{D} - \dfrac{d}{2}}, \\
        &= \pd{s^{(l)}(m)} + \pd{t^{(l)}(m)} + \pa{\dfrac{1}{2\sqrt{\tau}}\pa{1-\lambda + \sqrt{\tau}}\pa{\alpha - \beta\sqrt{\tau}}^m}.
    \end{align*}
    Based on Equation \eqref{eq:7.11}, \(\dfrac{1}{2\sqrt{\tau}}\pa{1-\lambda + \sqrt{\tau}}\pa{\alpha - \beta\sqrt{\tau}}^m \notin \Z\) and by assumption, \(\pd{s^{(l)}(m)} + \pd{t^{(l)}(m)} = n \in \Z\), so \(\dfrac{P_l}{2\sqrt{\tau}}\pa{\alpha + \beta\sqrt{\tau}}^m + Q_l \notin \Z\). \\

    Consider the case \(l \in \{2,4\}\) and \(\lambda > -\sqrt{\tau}\). Then \(\pd{s^{(l)}(m)} + \pd{t^{(l)}(m)}\) is equal to Equation \eqref{eq:7.13}. From Equation \eqref{eq:1.15}, Equation \eqref{eq:1.16} and Equation \eqref{eq:7.13},
    \begin{align*}
        &\dfrac{P_l}{2\sqrt{\tau}}\pa{\alpha + \beta\sqrt{\tau}}^m + Q_l \\
        &= \dfrac{1}{2\sqrt{\tau}}\pa{1+\lambda + \sqrt{\tau}}\pa{\alpha + \beta\sqrt{\tau}}^m + (-1)^{\pe{\frac{l}{2}}}\pa{\dfrac{E}{D}\lambda + \dfrac{E}{D} - \dfrac{d}{2}}, \\
        &= \pd{s^{(l)}(m)} + \pd{t^{(l)}(m)} + \pa{\dfrac{1}{2\sqrt{\tau}}\pa{1+\lambda - \sqrt{\tau}}\pa{\alpha - \beta\sqrt{\tau}}^m}.
    \end{align*}
    Based on Equation \eqref{eq:7.14} and Equation \eqref{eq:7.15}, \(\dfrac{1}{2\sqrt{\tau}}\pa{1+\lambda - \sqrt{\tau}}\pa{\alpha - \beta\sqrt{\tau}}^m \notin \Z\) and by assumption, \(\pd{s^{(l)}(m)} + \pd{t^{(l)}(m)} = n \in \Z\), so \(\dfrac{P_l}{2\sqrt{\tau}}\pa{\alpha + \beta\sqrt{\tau}}^m + Q_l \notin \Z\). \\
    
    Inequality Equation \eqref{eq:7.17} now can be rewrite as
    \begin{align}
        \label{eq:7.18}
        n-R_l < \dfrac{P_l}{2\sqrt{\tau}}\pa{\alpha + \beta\sqrt{\tau}}^m + Q_l < n - R_l + 1.
    \end{align}
    On one hand of Equation \eqref{eq:7.18} (note that \(P_l > 0\) and \(\alpha + \beta\sqrt{\tau} > 1\)),
    \begin{align}
        \notag
        \dfrac{P_l}{2\sqrt{\tau}}\pa{\alpha + \beta\sqrt{\tau}}^m + Q_l &< n - R_l + 1, \\
        \notag
        \implies 0 < \pa{\alpha + \beta\sqrt{\tau}}^m &< \dfrac{2\sqrt{\tau}}{P_l}\pa{n - R_l + 1 - Q_l}, \\
        \notag
        \implies m &< \dfrac{\log\pa{n - R_l + 1 - Q_l} - \log P_l + \log \pa{2\sqrt{\tau}}}{\log \pa{\alpha + \beta\sqrt{\tau}}}, \\
        \label{eq:7.19}
        \implies \pe{m} &< \dfrac{\log\pa{n - R_l + 1 - Q_l} - \log P_l + \log \pa{2\sqrt{\tau}}}{\log \pa{\alpha + \beta\sqrt{\tau}}}.
    \end{align}
    On the other hand of Equation \eqref{eq:7.18} (note that \(P_l > 0\)),
    \begin{align}
        \notag
        n - R_l &< \dfrac{P_l}{2\sqrt{\tau}}\pa{\alpha + \beta\sqrt{\tau}}^m + Q_l, \\
        \label{eq:7.20}
        \implies \pa{\alpha + \beta\sqrt{\tau}}^m &> \dfrac{2\sqrt{\tau}}{P_l}\pa{n - R_l - Q_l}.
    \end{align}
    We claim that \(n - R_l - Q_l > 0\) so that we can take logs on both sides of inequality Equation \eqref{eq:7.20}. \\
    
    Consider the case \(l \in \{1,3\}\) and \(\lambda < \sqrt{\tau}\). Then \(n = \pd{s^{(l)}(m)} + \pd{t^{(l)}(m)}\) is equal to Equation \eqref{eq:7.2}. From Equation \eqref{eq:7.2}, Equation \eqref{eq:1.16} and Equation \eqref{eq:1.17},
    \begin{align*}
        &n - R_l - Q_l \\
        &= \pd{s^{(l)}(m)} + \pd{t^{(l)}(m)} - R_l - Q_l, \\
        &= \pa{u(m) + \pa{1 -\lambda} v(m) + (-1)^{\pe{\frac{l}{2}}}\pa{\dfrac{E}{D}\lambda - \dfrac{E}{D} - \dfrac{d}{2}}} \\
        &\hspace{150pt} - R_l - \pa{(-1)^{\pe{\frac{l}{2}}}\pa{\dfrac{E}{D}\lambda - \dfrac{E}{D} - \dfrac{d}{2}}}, \\
        &= u(m) + \pa{1 -\lambda} v(m) - R_l, \\
        &\geq u(m) + \pa{1 -\lambda} v(m) - 1, \\
        &> u(m) + \pa{1 - \sqrt{\tau}} v(m) - 1 = \bigl(u(m) - v(m)\sqrt{\tau}\bigr) + v(m) - 1 > 0 + v(1) - 1 \geq 0.
    \end{align*}

    Consider the case \(l \in \{1,3\}\) and \(\lambda > \sqrt{\tau}\). Then \(n = \pd{s^{(l)}(m)} + \pd{t^{(l)}(m)}\) is equal to Equation \eqref{eq:7.6}. From Equation \eqref{eq:7.6}, Equation \eqref{eq:1.16} and Equation \eqref{eq:1.17},
    \begin{align*}
        &n - R_l - Q_l \\
        &= \pd{s^{(l)}(m)} + \pd{t^{(l)}(m)} - R_l - Q_l, \\
        &= \pa{-u(m) + \pa{1 +\lambda} v(m) - (-1)^{\pe{\frac{l}{2}}}\pa{\dfrac{E}{D}\lambda + \dfrac{E}{D} - \dfrac{d}{2}}} \\
        &\hspace{150pt} - R_l - \pa{-(-1)^{\pe{\frac{l}{2}}}\pa{\dfrac{E}{D}\lambda - \dfrac{E}{D} - \dfrac{d}{2}}}, \\
        &= -u(m) + \pa{1 +\lambda} v(m), \\
        &> -u(m) + \pa{1 + \sqrt{\tau}} v(m) = -\bigl(u(m) - v(m)\sqrt{\tau}\bigr) + v(m) > -1 + v(1) \geq 0.
    \end{align*}

    Consider the case \(l \in \{2,4\}\) and \(\lambda < -\sqrt{\tau}\). Then \(n = \pd{s^{(l)}(m)} + \pd{t^{(l)}(m)}\) is equal to Equation \eqref{eq:7.9}. From Equation \eqref{eq:7.9}, Equation \eqref{eq:1.16} and Equation \eqref{eq:1.17},
    \begin{align*}
        &n - R_l - Q_l \\
        &= \pd{s^{(l)}(m)} + \pd{t^{(l)}(m)} - R_l - Q_l, \\
        &= \pa{-u(m) + \pa{1 -\lambda} v(m) - (-1)^{\pe{\frac{l}{2}}}\pa{\dfrac{E}{D}\lambda - \dfrac{E}{D} - \dfrac{d}{2}}} \\
        &\hspace{150pt} - R_l - \pa{- (-1)^{\pe{\frac{l}{2}}}\pa{\dfrac{E}{D}\lambda - \dfrac{E}{D} - \dfrac{d}{2}}}, \\
        &= -u(m) + \pa{1 -\lambda} v(m), \\
        &> -u(m) + \pa{1 + \sqrt{\tau}} v(m) = -\bigl(u(m) - v(m)\sqrt{\tau}\bigr) + v(m) > -1 + v(1) \geq 0.
    \end{align*}

    Consider the case \(l \in \{2,4\}\) and \(\lambda > -\sqrt{\tau}\). Then \(n = \pd{s^{(l)}(m)} + \pd{t^{(l)}(m)}\) is equal to Equation \eqref{eq:7.12}. From Equation \eqref{eq:7.12}, Equation \eqref{eq:1.16} and Equation \eqref{eq:1.17},
    \begin{align*}
        &n - R_l - Q_l \\
        &= \pd{s^{(l)}(m)} + \pd{t^{(l)}(m)} - R_l - Q_l, \\
        &= \pa{u(m) + \pa{1 +\lambda} v(m) + (-1)^{\pe{\frac{l}{2}}}\pa{\dfrac{E}{D}\lambda + \dfrac{E}{D} - \dfrac{d}{2}}} \\
        &\hspace{150pt} - R_l - \pa{(-1)^{\pe{\frac{l}{2}}}\pa{\dfrac{E}{D}\lambda + \dfrac{E}{D} - \dfrac{d}{2}}}, \\
        &= u(m) + \pa{1 +\lambda} v(m) - R_l, \\
        &\geq u(m) + \pa{1 +\lambda} v(m) - 1, \\
        &> u(m) + \pa{1 - \sqrt{\tau}} v(m) - 1 = \bigl(u(m) - v(m)\sqrt{\tau}\bigr) + v(m) - 1 > 0 + v(1) - 1 \geq 0.
    \end{align*}
    Taking logs on both sides of Equation \eqref{eq:7.20} (note that \(\alpha + \beta\sqrt{\tau} > 1\)), we obtain
    \begin{align}
        \label{eq:7.21}
        m &> \dfrac{\log\pa{n - R_l - Q_l} - \log P_l + \log \pa{2\sqrt{\tau}}}{\log \pa{\alpha + \beta\sqrt{\tau}}}.
    \end{align}
    Consider the integer \(\pe{m} + 1\), and let
    \[n' = \pd{s^{(l)}\pa{\pe{m} + 1}} + \pd{t^{(l)}\pa{\pe{m} + 1}} \in \Z.\]
    Since \(n \geq M'_l\), then by Lemma \ref{lem:5.3} and Lemma \ref{lem:5.6}, \(m \geq \max\{N_0, N_l\} \geq N_0\). Since \(\pd{s^{(l)}(m)} + \pd{t^{(l)}(m)}\) is strictly increasing for all \(m \geq N_0\), then from \(\pe{m} + 1 > m \geq N_0\), 
    \[n' = \pd{s^{(l)}\pa{\pe{m} + 1}} + \pd{t^{(l)}\pa{\pe{m} + 1}} > \pd{s^{(l)}(m)} + \pd{t^{(l)}(m)} = n.\]
    Notice that inequality Equation \eqref{eq:7.21} is true for all positive integers \(n \geq M'_l\) (with the chosen \(m \in [1,+\infty)\) such that \(\pd{s^{(l)}(m)} + \pd{t^{(l)}(m)} = n\)). Since \(n' > n \geq M'_l\), therefore we can replace \(m\) and \(n\) in Equation \eqref{eq:7.21} with \(\pe{m} + 1\) and \(n'\) respectively:
    \begin{align}
        \label{eq:7.22}
        \pe{m} + 1 &> \dfrac{\log\pa{n' - R_l - Q_l} - \log P_l + \log \pa{2\sqrt{\tau}}}{\log \pa{\alpha + \beta\sqrt{\tau}}}.
    \end{align}
    Since both \(n'\) and \(n\) are integers, and \(n' > n\), we must have \(n' \geq n + 1\). Therefore from Equation \eqref{eq:7.22},
    \begin{align}
        \notag
        \pe{m} + 1 &> \dfrac{\log\pa{n' - R_l - Q_l} - \log P_l + \log \pa{2\sqrt{\tau}}}{\log \pa{\alpha + \beta\sqrt{\tau}}}, \\
        \label{eq:7.23}
        \implies \pe{m} + 1 &> \dfrac{\log\pa{n - R_l + 1 - Q_l} - \log P_l + \log \pa{2\sqrt{\tau}}}{\log \pa{\alpha + \beta\sqrt{\tau}}}.
    \end{align}
    We no need to worry about the value \(\log\pa{n - R_l + 1 - Q_l}\) being undefined due to we already showed \(n - R_l - Q_l > 0\) previously. Finally from both Equation \eqref{eq:7.19} and Equation \eqref{eq:7.23}, we get Equation \eqref{eq:7.16}.
\end{proof}

\vspace{5pt}
\begin{lemma}
    \label{lem:7.5}
    Let \(l \in \{1,2,3,4\}\) and \(M'_l\) as defined in Equation \eqref{eq:5.1}. For any real number \(x \geq M'_l\), 
    \[\pd{\mD_{A}^{(l)}(x)} = \pe{\dfrac{\log\pa{\pe{x} - R_l + 1- Q_l} - \log P_l + \log \pa{2\sqrt{\tau}}}{\log \pa{\alpha + \beta\sqrt{\tau}}}},\]
    where \(P_l\), \(Q_l\) and \(R_l\) are defined as Equation \eqref{eq:1.15}, Equation \eqref{eq:1.16} and Equation \eqref{eq:1.17} respectively.
\end{lemma}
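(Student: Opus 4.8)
The plan is to reduce the real-variable statement to the integer-variable statement already established in Lemma \ref{lem:7.4}, by exploiting the fact that the set \(\mD_{A}^{(l)}(x)\) depends on \(x\) only through its floor \(\pe{x}\).

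First I would inspect the defining formula for \(\mD_{A}^{(l)}(x)\) in Equation \eqref{eq:6.3}, namely
\[\mD_{A}^{(l)}(x) = \Bigl\{\pa{s_m^{(l)}, t_m^{(l)}} : m \in \N, \pd{s_m^{(l)}} + \pd{t_m^{(l)}} \leq \pe{x}\Bigr\},\]
and observe that the membership condition \(\pd{s_m^{(l)}} + \pd{t_m^{(l)}} \leq \pe{x}\) involves \(x\) solely through the integer \(\pe{x}\). Consequently, for every real \(x\) one has \(\mD_{A}^{(l)}(x) = \mD_{A}^{(l)}\pa{\pe{x}}\) as sets, since \(\pe{x}\) is itself an integer and hence \(\pe{\pe{x}} = \pe{x}\). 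In particular \(\pd{\mD_{A}^{(l)}(x)} = \pd{\mD_{A}^{(l)}\pa{\pe{x}}}\).

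Next I would verify that \(\pe{x}\) qualifies as an admissible argument for Lemma \ref{lem:7.4}. Because \(M'_l \in \N\) is a positive integer and \(x \geq M'_l\), applying the floor gives \(\pe{x} \geq \pe{M'_l} = M'_l\), so \(\pe{x}\) is a positive integer satisfying \(\pe{x} \geq M'_l\). Lemma \ref{lem:7.4} then applies verbatim with \(n = \pe{x}\), yielding
\[\pd{\mD_{A}^{(l)}\pa{\pe{x}}} = \pe{\dfrac{\log\pa{\pe{x} - R_l + 1- Q_l} - \log P_l + \log \pa{2\sqrt{\tau}}}{\log \pa{\alpha + \beta\sqrt{\tau}}}}.\]
Combining this with the identity \(\pd{\mD_{A}^{(l)}(x)} = \pd{\mD_{A}^{(l)}\pa{\pe{x}}}\) from the previous step gives exactly the claimed formula.

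In this argument there is no genuine analytic obstacle: the entire content was absorbed into Lemma \ref{lem:7.4}, and Lemma \ref{lem:7.5} is essentially a bookkeeping extension from integer \(n\) to real \(x\). The only point demanding a moment of care is the integrality of \(M'_l\), which guarantees that \(x \geq M'_l\) forces \(\pe{x} \geq M'_l\); were \(M'_l\) merely a real threshold, the reduction to the integer \(\pe{x}\) could fail precisely at the lower boundary of the admissible range.
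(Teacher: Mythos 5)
Your proposal is correct and follows essentially the same route as the paper: reduce to the integer case via \(\mD_{A}^{(l)}(x) = \mD_{A}^{(l)}\pa{\pe{x}}\) and then invoke Lemma \ref{lem:7.4} with \(n = \pe{x}\). Your explicit check that \(x \geq M'_l\) and \(M'_l \in \N\) force \(\pe{x} \geq M'_l\) is a detail the paper leaves implicit, but the argument is the same.
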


\begin{proof}
    Notice that from Equation \eqref{eq:6.3},
    \begin{align*}
        \mD_{A}^{(l)}(x) &= \Bigl\{\pa{s_m^{(l)}, t_m^{(l)}} : m \in \N, \pd{s_m^{(l)}} + \pd{t_m^{(l)}} \leq \pe{x}\Bigr\} = \mD_{A}^{(l)}\pa{\pe{x}},
    \end{align*}
    hence \(\pd{\mD_{A}^{(l)}(x)} = \pd{\mD_{A}^{(l)}(\pe{x})}\). To compute \(\pd{\mD_{A}^{(l)}(\pe{x})}\), this is just direct application of Lemma \ref{lem:7.4}.
\end{proof}

\vspace{5pt}
\subsection{Number of Elements \(\pd{\mD_{A}^{(0)}(x)}\)}

\vspace{5pt}
\begin{lemma}
    \label{lem:7.6}
    There exists \(l \in \{1,2,3,4\}\) such that
    \[\pd{\pm 1 + \dfrac{E}{D}\lambda - \dfrac{d}{2}} + \dfrac{\pd{E}}{D} < \pd{s^{(l)}_{N_0}} + \pd{t^{(l)}_{N_0}},\]
    where \(N_0\) is defined as Equation \eqref{eq:3.2}.
\end{lemma}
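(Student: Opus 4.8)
The plan is to read the four numbers $\pd{s^{(l)}_{N_0}} + \pd{t^{(l)}_{N_0}}$ ($l = 1,2,3,4$) through the affine substitution of Theorem \ref{thm:1.3}(c). Writing $\gamma = \dfrac{E}{D}\lambda - \dfrac{d}{2}$ and $\delta = \dfrac{E}{D}$ (so that $\dfrac{\pd{E}}{D} = \pd{\delta}$ since $D>0$), Equations \eqref{eq:1.18} and \eqref{eq:1.19} give $s^{(l)}_{N_0} = \epsilon_1 u_{N_0} - \lambda\epsilon_2 v_{N_0} + \gamma$ and $t^{(l)}_{N_0} = \epsilon_2 v_{N_0} - \delta$, where $(\epsilon_1, \epsilon_2) = \pa{(-1)^{\pe{l/2}}, (-1)^{\pe{(l-1)/2}}}$; as $l$ ranges over $\{1,2,3,4\}$ the pair $(\epsilon_1, \epsilon_2)$ ranges over all of $\{-1,+1\}^2$. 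On the other side, $\pd{\pm 1 + \gamma} + \pd{\delta} \le 1 + \pd{\gamma} + \pd{\delta}$ for either sign, so it suffices to produce a single pair $(\epsilon_1, \epsilon_2)$ (equivalently a single $l$) with
\[ \pd{\epsilon_1 u_{N_0} - \lambda\epsilon_2 v_{N_0} + \gamma} + \pd{\epsilon_2 v_{N_0} - \delta} > 1 + \pd{\gamma} + \pd{\delta}. \]

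Because $\lambda \in \Z$ and $\sqrt{\tau}$ is irrational, exactly one of $\pd{\lambda} < \sqrt{\tau}$ or $\pd{\lambda} > \sqrt{\tau}$ holds; I treat them separately. In both I use $u_{N_0} \ge \alpha \ge 1$ and $v_{N_0} \ge \beta \ge 1$ (Lemma \ref{lem:3.1}) and the defining inequalities of $N_0$ from Equation \eqref{eq:3.2}, exploiting that $\epsilon_1$ and $\epsilon_2$ may be chosen independently to enlarge each summand. For $\pd{\lambda} < \sqrt{\tau}$, pick $\epsilon_2$ with $\epsilon_2\delta \le 0$ so that $\pd{t^{(l)}_{N_0}} = v_{N_0} + \pd{\delta}$, and then $\epsilon_1$ so that $\pd{s^{(l)}_{N_0}} = u_{N_0} + \pd{-\lambda\epsilon_2 v_{N_0} + \gamma} \ge u_{N_0}$. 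The $N_0$-inequality $u_{N_0} > \pd{\lambda} v_{N_0} + \pd{\gamma} \ge \pd{\gamma}$ together with $v_{N_0}\ge 1$ yields $u_{N_0} + v_{N_0} > 1 + \pd{\gamma}$, and hence the sum is at least $u_{N_0} + v_{N_0} + \pd{\delta} > 1 + \pd{\gamma} + \pd{\delta}$, as wanted.

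For $\pd{\lambda} > \sqrt{\tau}$ only the \emph{upper} bound $u_{N_0} < \pd{\lambda} v_{N_0} - \pd{\gamma}$ is available, so enlarging $\pd{t^{(l)}_{N_0}}$ no longer suffices. Instead I choose $(\epsilon_1, \epsilon_2)$ so that the three terms of $s^{(l)}_{N_0}$ align in sign, giving $\pd{s^{(l)}_{N_0}} = u_{N_0} + \pd{\lambda} v_{N_0} + \pd{\gamma}$, while automatically $\pd{t^{(l)}_{N_0}} \ge v_{N_0} - \pd{\delta}$. The sum is then at least $u_{N_0} + \pa{\pd{\lambda} + 1}v_{N_0} + \pd{\gamma} - \pd{\delta}$; since $\pd{\lambda} > \sqrt{\tau} > 1$ forces $\pd{\lambda} + 1 \ge 3$ and $v_{N_0} > \pd{\delta}$ gives $\pa{\pd{\lambda}+1}v_{N_0} > 2\pd{\delta}$, the bound $u_{N_0} \ge 1$ shows the sum exceeds $1 + \pd{\gamma} + \pd{\delta}$.

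The crux is the branch $\pd{\lambda} > \sqrt{\tau}$: the strategy that works when $\pd{\lambda} < \sqrt{\tau}$ (maximize the $t$-coordinate and use $u_{N_0} > \pd{\gamma}$) breaks down because Equation \eqref{eq:3.2} now pins $u_{N_0}$ only from above, so one must notice that the dominant term $\pd{\lambda} v_{N_0}$ sits in the $s$-coordinate and switch to maximizing $\pd{s^{(l)}_{N_0}}$. The only thing to keep honest across both branches is that the sign pattern used is one of the four admissible $l$'s, which holds because $(\epsilon_1,\epsilon_2)$ exhausts $\{-1,+1\}^2$; this is what lets the two separate constructions be packaged into the single existence claim of the lemma.
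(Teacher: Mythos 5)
Your proposal is correct, but it is organized differently from the paper's proof, which is shorter and entirely uniform. Writing $\gamma = \frac{E}{D}\lambda - \frac{d}{2}$ and $\delta = \frac{E}{D}$ as you do, the paper always selects the single $l$ whose sign pattern aligns all three terms of $s^{(l)}_{N_0}$ (the choice you make only in your second branch), so that $\pd{s^{(l)}_{N_0}} = u_{N_0} + \pd{\lambda}\,v_{N_0} + \pd{\gamma}$, and then closes using just two facts valid in every case: $v_{N_0} > \pd{E}/D$ from Equation \eqref{eq:3.2}, and $u_{N_0} \geq v_{N_0} + 1$ (both are integers with $u_{N_0} > v_{N_0}$). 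These already give $u_{N_0} > 1 + \pd{E}/D$, and adding $\pd{\gamma}$ while discarding $\pd{t^{(l)}_{N_0}} \geq 0$ finishes the proof. In particular, the first defining inequality of $N_0$ (the comparison of $u_{N_0}$ with $|\lambda| v_{N_0} \pm \pd{\gamma}$) is never used, so the dichotomy $\pd{\lambda} < \sqrt{\tau}$ versus $\pd{\lambda} > \sqrt{\tau}$ that you identify as the crux is avoidable: the ``maximize $\pd{s}$'' strategy does not actually break down in either regime, because the needed lower bound on $u_{N_0}$ comes from $u_{N_0} \geq v_{N_0}+1 > \pd{E}/D + 1$ rather than from Equation \eqref{eq:3.2}'s $u$--$v$ comparison. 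That said, each of your two branches is sound: the $\pd{\lambda} < \sqrt{\tau}$ branch (align the $t$-coordinate, then use $u_{N_0} > \pd{\gamma}$ and $v_{N_0} \geq 1$) is a genuinely different, valid selection of $l$, and the $\pd{\lambda} > \sqrt{\tau}$ branch is the paper's selection with a different closing estimate, $\pa{\pd{\lambda}+1}v_{N_0} > 2\pd{E}/D$ in place of $u_{N_0} > 1 + \pd{E}/D$. Your preliminary reductions --- that the four values of $l$ realize all four sign pairs $(\epsilon_1,\epsilon_2)$, and that $\pd{\pm 1 + \gamma} + \pd{\delta} \leq 1 + \pd{\gamma} + \pd{\delta}$ --- also match the paper and are correct.
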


\begin{proof}
    From Equation \eqref{eq:1.18} and Equation \eqref{eq:1.19},
    \begin{align}
        \notag
        &\pd{s^{(l)}_{N_0}} + \pd{t^{(l)}_{N_0}} \\
        \label{eq:7.24}
        &= \pd{(-1)^{\pe{\frac{l}{2}}} u_{N_0} - (-1)^{\pe{\frac{l-1}{2}}} \lambda v_{N_0} + \pa{\dfrac{E}{D}\lambda - \dfrac{d}{2}}} + \pd{(-1)^{\pe{\frac{l-1}{2}}} v_{N_0} - \dfrac{E}{D}}.
    \end{align}
    We now pick \(l \in \{1,2,3,4\}\) by considering the cases below:
    \begin{itemize}
        \item If \(\lambda < 0\) and \(\dfrac{E}{D}\lambda - \dfrac{d}{2} \geq 0\), then \(l = 1\);
        \item If \(\lambda \geq 0\) and \(\dfrac{E}{D}\lambda - \dfrac{d}{2} < 0\), then \(l = 2\);
        \item If \(\lambda < 0\) and \(\dfrac{E}{D}\lambda - \dfrac{d}{2} < 0\), then \(l = 3\);
        \item If \(\lambda \geq 0\) and \(\dfrac{E}{D}\lambda - \dfrac{d}{2} \geq 0\), then \(l = 4\).
    \end{itemize}
    This implies
    \begin{align}
        \label{eq:7.25}
        \pd{(-1)^{\pe{\frac{l}{2}}} u_{N_0} - (-1)^{\pe{\frac{l-1}{2}}} \lambda v_{N_0} + \pa{\dfrac{E}{D}\lambda - \dfrac{d}{2}}} = u_{N_0} + |\lambda| v_{N_0} + \pd{\dfrac{E}{D}\lambda - \dfrac{d}{2}}.
    \end{align}
    On the other hand, by triangle inequality,
    \begin{align}
        \label{eq:7.26}
        \pd{\pm 1 + \dfrac{E}{D}\lambda - \dfrac{d}{2}} + \dfrac{\pd{E}}{D} \leq 1 + \pd{\dfrac{E}{D}\lambda - \dfrac{d}{2}} + \dfrac{\pd{E}}{D}.
    \end{align}
    Hence, from Equation \eqref{eq:7.24}, Equation \eqref{eq:7.25} and Equation \eqref{eq:7.26}, we are sufficient to show
    \begin{align}
        \label{eq:7.27}
        1 + \pd{\dfrac{E}{D}\lambda - \dfrac{d}{2}} + \dfrac{\pd{E}}{D} < u_{N_0} + |\lambda| v_{N_0} + \pd{\dfrac{E}{D}\lambda - \dfrac{d}{2}} + \pd{(-1)^{\pe{\frac{l-1}{2}}} v_{N_0} - \dfrac{E}{D}}.
    \end{align}
    From Equation \eqref{eq:3.2}, \(v_{N_0} > \dfrac{\pd{E}}{D}\). From Equation \eqref{eq:1.9} and Equation \eqref{eq:1.10}, it is clear that \(u_{N_0} > v_{N_0}\), hence \(u_{N_0} \geq v_{N_0} + 1\). So we have
    \begin{align}
        \notag
        1 + \dfrac{\pd{E}}{D} < 1 + v_{N_0} \leq u_{N_0} &\leq u_{N_0} + |\lambda| v_{N_0} + \pd{(-1)^{\pe{\frac{l-1}{2}}} v_{N_0} - \dfrac{E}{D}}, \\
        \label{eq:7.28}
        \implies 1 + \dfrac{\pd{E}}{D} &< u_{N_0} + |\lambda| v_{N_0} + \pd{(-1)^{\pe{\frac{l-1}{2}}} v_{N_0} - \dfrac{E}{D}}.
    \end{align}
    Adding \(\pd{\dfrac{E}{D}\lambda - \dfrac{d}{2}}\) to both sides of Equation \eqref{eq:7.28}, we get Equation \eqref{eq:7.27}.
\end{proof}

\vspace{5pt}
\begin{lemma}
    \label{lem:7.7}
    Let \(l \in \{1,2,3,4\}\) and \(M'_l\) as defined in Equation \eqref{eq:5.1}. For any real number \(x \geq \mL := \max\Bigl\{M'_l: l \in \{1,2,3,4\}\Bigr\}\), 
    \[\pd{\mD_{A}^{(0)}(x)} = 2.\]
\end{lemma}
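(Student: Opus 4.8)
The plan is to show that for $x \geq \mL$ both candidate points appearing in the defining expression \eqref{eq:6.2} of $\mD_{A}^{(0)}(x)$ actually belong to the set, and that these two points are distinct; since \eqref{eq:6.2} exhibits $\mD_{A}^{(0)}(x)$ as a union of two (conditional) singletons, this will force $\pd{\mD_{A}^{(0)}(x)} = 2$.

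First I would record two elementary structural facts. The two candidate points are $\pa{1 + \frac{E}{D}\lambda - \frac{d}{2}, -\frac{E}{D}}$ and $\pa{-1 + \frac{E}{D}\lambda - \frac{d}{2}, -\frac{E}{D}}$, whose first coordinates differ by $2$, so they are always distinct. Moreover, by Theorem \ref{thm:1.3}(a) we have $a = 1$, whence $\frac{d}{2} \in \Z$ (from $2a \mid d$), $\lambda = \frac{b}{2} \in \Z$, and $\frac{E}{D} \in \Z$ (from $D \mid E$); consequently each quantity $\pd{\pm 1 + \frac{E}{D}\lambda - \frac{d}{2}} + \frac{\pd{E}}{D}$ is a non-negative integer.

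Next I would bound both of these quantities uniformly. By Lemma \ref{lem:7.6} there is a single index $l \in \{1,2,3,4\}$ such that, for both choices of sign,
\[
\pd{\pm 1 + \frac{E}{D}\lambda - \frac{d}{2}} + \frac{\pd{E}}{D} < \pd{s^{(l)}_{N_0}} + \pd{t^{(l)}_{N_0}}.
\]
Chaining this with \eqref{eq:5.6}, which reads $\pd{s^{(l)}(N_0)} + \pd{t^{(l)}(N_0)} \leq M'_l$, and with $M'_l \leq \mL$ (immediate from $\mL = \max\{M'_l\}$), yields for both signs
\[
\pd{\pm 1 + \frac{E}{D}\lambda - \frac{d}{2}} + \frac{\pd{E}}{D} < \mL.
\]

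Finally, because $\mL \in \N$ and $x \geq \mL$, we have $\pe{x} \geq \mL$, so both quantities lie strictly below $\pe{x}$ and in particular satisfy the $\leq \pe{x}$ membership conditions of \eqref{eq:6.2}. Hence both candidate points lie in $\mD_{A}^{(0)}(x)$, and together with their distinctness this gives $\pd{\mD_{A}^{(0)}(x)} = 2$. I do not anticipate a genuine obstacle: the substantive estimates are already packaged in Lemma \ref{lem:7.6} and \eqref{eq:5.6}, so the remaining task is only to chain the inequalities while tracking integrality and the passage from $x$ to $\pe{x}$. The single point demanding care is that Lemma \ref{lem:7.6} must supply the \emph{same} index $l$ for both signs $\pm 1$ at once --- which is precisely what its triangle-inequality argument provides.
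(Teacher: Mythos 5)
Your proposal is correct and follows essentially the same route as the paper: reduce to showing both membership conditions in \eqref{eq:6.2} hold for $\pe{x}$, invoke Lemma \ref{lem:7.6} for a single index $l$ covering both signs, and chain through $M'_l \leq \mL \leq \pe{x}$ (you cite \eqref{eq:5.6} directly where the paper re-derives the same bound via $\max\{N_0,N_l\}$, which is an equivalent step). Your explicit remark that the two points are distinct because their first coordinates differ by $2$ is a small but welcome addition that the paper leaves implicit.
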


\begin{proof}
    Let \(x \geq \mL\). Clearly \(\pe{x} \geq \mL\) since \(\mL \in \Z\). By Equation \eqref{eq:6.2}, \(\pd{\mD_{A}^{(0)}(x)} = 2\) if
    \begin{align}
        \label{eq:7.29}
        \pd{\pm 1 + \dfrac{E}{D}\lambda - \dfrac{d}{2}} + \dfrac{\pd{E}}{D} \leq \pe{x},
    \end{align}
    so we are sufficient to show Equation \eqref{eq:7.29}. From Lemma \ref{lem:7.6}, there exists \(l \in \{1,2,3,4\}\) such that \(\pd{\pm 1 + \dfrac{E}{D}\lambda - \dfrac{d}{2}} + \dfrac{\pd{E}}{D} < \pd{s^{(l)}_{N_0}} + \pd{t^{(l)}_{N_0}}\). It follows that
    \begin{align*}
        &\pd{\pm 1 + \dfrac{E}{D}\lambda - \dfrac{d}{2}} + \dfrac{\pd{E}}{D} \\
        &\leq \pd{s^{(l)}_{N_0}} + \pd{t^{(l)}_{N_0}}, \\
        &\leq \pd{s_{\max\pc{N_0, N_l}}^{(l)}} + \pd{t_{\max\pc{N_0, N_l}}^{(l)}}, \\
        &\leq \max\pc{\pd{s_{1}^{(l)}}, \pd{s_{\max\pc{N_0, N_l}}^{(l)}}} + \max\pc{\pd{t_{1}^{(l)}}, \pd{t_{\max\pc{N_0, N_l}}^{(l)}}}, \\
        &= M'_l, \\
        &\leq \max\pc{M'_l: l \in \{1,2,3,4\}} = \mL, \\
        &\leq \pe{x}.
    \end{align*}
\end{proof}

\vspace{5pt}
\section{Proof of Theorem \ref{thm:1.4}}
\begin{proof}
    From Equation \eqref{eq:6.9},
    \[\Bigl|\mD_{A}(x)\Bigr| = \pd{\mathcal{D}_{A}^{(0)}(x)} + \pa{\sum_{l=1}^{4}\pd{\mathcal{D}_{A}^{(l)}(x)}}.\]
    By applying Lemma \ref{lem:7.5} and Lemma \ref{lem:7.7},
    \[\Bigl|\mD_{A}(x)\Bigr| = 2 + \displaystyle\sum_{l=1}^4\pe{\dfrac{\log\pa{\pe{x} - R_l + 1- Q_l} - \log P_l + \log \pa{2\sqrt{\tau}}}{\log \pa{\alpha + \beta\sqrt{\tau}}}},\]
    where \(P_l\), \(Q_l\) and \(R_l\) are defined as Equation \eqref{eq:1.15}, Equation \eqref{eq:1.16} and Equation \eqref{eq:1.17} respectively.
\end{proof}

\vspace{5pt}
\section{Proof of Theorem \ref{thm:1.5}}
\begin{proof}
    From Equation \eqref{eq:6.10},
    \[\mD_{A}(x) = \mathcal{D}_{A}^{(0)}(x) \cup \pa{\bigcup_{l=1}^{4}\mathcal{D}_{A}^{(l)}(x)}.\]
    Since \(\pd{\mathcal{D}_{A}^{(0)}(x)} = 2\) from Lemma \ref{lem:7.7}, so by Equation \eqref{eq:6.2}, we must have
    \[\mathcal{D}_{A}^{(0)}(x) = \pc{\pa{1 + \dfrac{E}{D}\lambda- \dfrac{d}{2}, - \dfrac{E}{D}}, \pa{-1 + \dfrac{E}{D}\lambda- \dfrac{d}{2}, - \dfrac{E}{D}}}.\]
    We now claim that for any \(l \in \{1,2,3,4\}\) and \(x \geq M'_l\),
    \[\mathcal{D}_{A}^{(l)}(x) = \pc{\pa{s_m^{(l)}, t_m^{(l)}} : 1 \leq m \leq \pe{\dfrac{\log\pa{\pe{x} - R_l + 1- Q_l} - \log P_l + \log \pa{2\sqrt{\tau}}}{\log \pa{\alpha + \beta\sqrt{\tau}}}}},\]
    so that Theorem \ref{thm:1.5} follows. We separate into two cases: \\

    Let \(x = M'_l\). Then by Lemma \ref{lem:7.1},
    \[\pd{\mathcal{D}_{A}^{(l)}\pa{M'_l}} = \pe{N'_l},\]
    where \(N'_l\) is defined as Equation \eqref{eq:5.10}, and from Lemma \ref{lem:7.5},
    \[\pd{\mathcal{D}_{A}^{(l)}(M'_l)} = \pe{\dfrac{\log\pa{M'_l - R_l + 1- Q_l} - \log P_l + \log \pa{2\sqrt{\tau}}}{\log \pa{\alpha + \beta\sqrt{\tau}}}}.\]
    Hence we must have
    \begin{align}
        \label{eq:9.1}
        \pe{N'_l} = \pe{\dfrac{\log\pa{M'_l - R_l + 1- Q_l} - \log P_l + \log \pa{2\sqrt{\tau}}}{\log \pa{\alpha + \beta\sqrt{\tau}}}}.
    \end{align}
    By applying Lemma \ref{lem:5.5}(c), any integer \(1 \leq m \leq \pe{N'_l}\) satisfies the condition \(\pd{s^{(l)}(m)} + \pd{t^{(l)}(m)} \leq M'_l\), while by applying Lemma \ref{lem:5.5}(a), any integer \(m \geq \pe{N'_l} + 1\) does not satisfy the condition. By applying Equation \eqref{eq:9.1}, we have
    \begin{align*}
        &\Bigl\{m \in \N : \pd{s_m^{(l)}} + \pd{t_m^{(l)}} \leq M'_l\Bigr\} \\
        &\hspace{20pt} = \Bigl\{m \in \N : 1 \leq m \leq \pe{N'_l}\Bigr\}, \\
        &\hspace{20pt} = \pc{m \in \N : 1 \leq m \leq \pe{\dfrac{\log\pa{M'_l - R_l + 1- Q_l} - \log P_l + \log \pa{2\sqrt{\tau}}}{\log \pa{\alpha + \beta\sqrt{\tau}}}}}.
    \end{align*}
    Therefore, from Equation \eqref{eq:6.3},
    \begin{align*}
        \mathcal{D}_{A}^{(l)}(M'_l) &= \Bigl\{\pa{s_m^{(l)}, t_m^{(l)}} : m \in \N, \pd{s_m^{(l)}} + \pd{t_m^{(l)}} \leq M'_l\Bigr\}, \\
        &= \pc{\pa{s_m^{(l)}, t_m^{(l)}} : 1 \leq m \leq \pe{\dfrac{\log\pa{M'_l - R_l + 1 - Q_l} - \log P_l + \log \pa{2\sqrt{\tau}}}{\log \pa{\alpha + \beta\sqrt{\tau}}}}},
    \end{align*}
    as desired. \\

    Let \(x > M'_l\). Then by Lemma \ref{lem:7.2},
    \[\pd{\mathcal{D}_{A}^{(l)}\pa{x}} = \pe{M},\]
    where \(\pd{s^{(l)}(M)} + \pd{t^{(l)}(M)} = x\), and from Lemma \ref{lem:7.5},
    \[\pd{\mathcal{D}_{A}^{(l)}(x)} = \pe{\dfrac{\log\pa{\pe{x} - R_l + 1- Q_l} - \log P_l + \log \pa{2\sqrt{\tau}}}{\log \pa{\alpha + \beta\sqrt{\tau}}}}.\]
    Hence we must have
    \begin{align}
        \label{eq:9.2}
        \pe{M} = \pe{\dfrac{\log\pa{\pe{x} - R_l + 1- Q_l} - \log P_l + \log \pa{2\sqrt{\tau}}}{\log \pa{\alpha + \beta\sqrt{\tau}}}}.
    \end{align}
    In the proof of Lemma \ref{lem:7.2}, we showed any integer \(1 \leq m \leq \pe{M}\) satisfies the condition \(\pd{s^{(l)}(m)} + \pd{t^{(l)}(m)} \leq \pe{x}\), while any integer \(m \geq \pe{M} + 1\) does not satisfy the condition. By applying Equation \eqref{eq:9.2}, we have
    \begin{align*}
        &\Bigl\{m \in \N : \pd{s_m^{(l)}} + \pd{t_m^{(l)}} \leq \pe{x}\Bigr\} \\
        &\hspace{20pt} = \Bigl\{m \in \N : 1 \leq m \leq \pe{M}\Bigr\}, \\
        &\hspace{20pt} = \pc{m \in \N : 1 \leq m \leq \pe{\dfrac{\log\pa{\pe{x} - R_l + 1- Q_l} - \log P_l + \log \pa{2\sqrt{\tau}}}{\log \pa{\alpha + \beta\sqrt{\tau}}}}}.
    \end{align*}
    Therefore, from Equation \eqref{eq:6.3},
    \begin{align*}
        \mathcal{D}_{A}^{(l)}(x) &= \Bigl\{\pa{s_m^{(l)}, t_m^{(l)}} : m \in \N, \pd{s_m^{(l)}} + \pd{t_m^{(l)}} \leq \pe{x}\Bigr\}, \\
        &= \pc{\pa{s_m^{(l)}, t_m^{(l)}} : 1 \leq m \leq \pe{\dfrac{\log\pa{\pe{x} - R_l + 1- Q_l} - \log P_l + \log \pa{2\sqrt{\tau}}}{\log \pa{\alpha + \beta\sqrt{\tau}}}}},
    \end{align*}
    as desired.
\end{proof}

\vskip 20pt\noindent {\bf Acknowledgement.}
The work in this paper was done while the first author was a master's degree student at Universiti Kebangsaan Malaysia (UKM) under the advisement of the second author. The first author would like to express the deepest gratitude under the advisement of the second author for the continuous guidance, advice and encouragement. \\


\begin{thebibliography}{1}\footnotesize


\bibitem{Titu2010} T. Andreescu, D. Andrica, and I. Cucurezeanu, {\it An Introduction to Diophantine Equations, A Problem-Based Approach}, Birkh\"{a}user, Basel, 2010.

\bibitem{Andrica2017} D. Andrica and V. Cri{\c s}an, Applications of the class number one problem to Diophantine equations and divisibility problems, {\it Bull. Math. Soc. Sci. Math. Roumanie (N.S.)} {\bf 60} (108)(1) (2017), 19-24.

\bibitem{Burton2010} D. M. Burton, {\it Elementary Number Theory}, 7th ed., McGraw-Hill, New York, 2010.

\bibitem{Dickson19} L. E. Dickson, {\it History of the Theory of Numbers: Vol. 2. Diophantine Analysis}, Carnegie Institution of Washington, Washington, D.C., 1920.

\bibitem{Fenolahy2024} F. Fenolahy, H. Ramanantsoa, and A. Totohasina, Studies of positive integer solutions of the Diophantine equation $x^2 - ay^2 - bx - cy - d = 0$ by the transformation method, {\it J. Math.} {\bf 2024} (2024), Article ID 3725675.

\bibitem{Keskin2010} R. Keskin, Solutions of some quadratic Diophantine equations, {\it Comput. Math. Appl.} {\bf 60} (8) (2010), 2225-2230.

\bibitem{Lagrange68} J.-L. Lagrange, {\it Sur la solution des probl\`{e}mes ind\'{e}termin\'{e}s du second degr\'{e}, Vol. II}, Gauthier-Villars, Paris, 1868.

\bibitem{Legendre97} A.-M. Legendre, {\it Essai sur la th\'{e}orie des nombres}, Duprat, Paris, 1798.

\bibitem{Matthews2002} K. Matthews, The Diophantine equation $ax^2+bxy+cy^2=N$, $D = b^2-4ac > 0$, {\it J. Th\'{e}or. Nombres Bordeaux} {\bf 14} (1) (2002), 257-270.

\bibitem{Matthews2021} K. R. Matthews and J. P. Robertson, On solving a binary quadratic Diophantine equation, {\it Rocky Mountain J. Math.} {\bf 51} (4) (2021), 1369-1385.

\bibitem{Niven1942} I. Niven, Quadratic Diophantine equations in the rational and quadratic fields, {\it Trans. Amer. Math. Soc.} {\bf 52} (1) (1942), 1-11.

\bibitem{Owings1970} J. C. (Jr) Owings, An elementary approach to Diophantine equations of the second degree, {\it Duke Math. J.} {\bf 37} (2) (1970), 261-273.

\bibitem{Rosen2005} K. H. Rosen, {\it Elementary Number Theory and Its Applications}, 6th ed., Pearson, Boston, 2011.

\bibitem{Tamang2024} B. B. Tamang and A. Singh, Integer solutions to quadratic Diophantine equations using efficient algorithms with elementary and quadratic ring methods, {\it Nepali Math. Sci. Rep.} {\bf 41} (2) (2024), 157-177.

\bibitem{Tekcan2010} A. Tekcan and A. \"Ozko{\c c}, The Diophantine equation $x^2 - \left(t^2 + t\right)y^2 - \left(4t + 2\right)x + \left(4t^2 + 4t\right)y = 0$, {\it Rev. Mat. Complut.} {\bf 23} (2) (2010), 251-260.

\bibitem{Weil1984} A. Weil, {\it Number Theory: An Approach through History -- From Hammurapi to Legendre}, Birkh\"{a}user, Boston, 1984.

\end{thebibliography}
\end{document}